\documentclass[12pt]{article}
\usepackage{amsmath}
\usepackage{amsbsy}
\usepackage{graphicx}
\usepackage{enumerate}
\usepackage{natbib}
\usepackage{url} 
\usepackage{mathrsfs}
\usepackage{hyperref}
\usepackage{color} 
\usepackage{bm} 
\usepackage{amsthm} 
\usepackage{amssymb} 
\usepackage{verbatim} 
\usepackage{enumitem} 
\usepackage{epsfig} 
\usepackage{fullpage}
\usepackage{setspace}
\usepackage{rotating} 
\usepackage{ulem}   

\usepackage{array}
\usepackage{booktabs} 
\usepackage{caption}
\usepackage{tabularx}
\usepackage{float}
\usepackage{graphicx}
\usepackage{subcaption}
\usepackage{caption}
\usepackage{appendix}
\usepackage{multirow}
\usepackage{bm}

\newcommand{\blue}[1]{\textcolor{blue}{#1}}

\newcommand{\ARXIV}[1]{\href{http://arXiv.org/abs/#1}{\blue{arXiv:#1}}}

\def\proclaim#1{\par \smallskip\noindent {\bf #1}\bgroup\it\ }
\def\endproclaim{\egroup\par\smallskip}

\newtheorem{theorem}{Theorem}[section]

\newtheorem{assumption}{Assumption}[section]

\newtheorem{corollary}{Corollary}[section]

\newtheorem{remark}{Remark}[section]
\newtheorem{example}{Example}[section]

\newbox\TempBox \newbox\TempBoxA

\newcommand{\Ex}{\textsf{E}}

\newcommand {\Prob}{\textsf{P}}
\newcommand{\Proj}{\textsf{Pj}}
\newcommand{\Var}{\textup{Var}}

\renewcommand{\theequation}{\thesection.\arabic{equation}}

\def\Var{\textsf{Var}} 

\renewcommand{\theequation}{\thesection.\arabic{equation}}
\def\underwiggle 1{
\ifmmode\setbox\TempBox=\hbox{$ 1$}\else\setbox\TempBox=\hbox{
1}\fi \setbox\TempBoxA=\hbox to \wd\TempBox{\hss\char'176\hss}
\rlap{\copy\TempBox}\smash{\lower9pt\hbox{\copy\TempBoxA}} }

\begin{document}

\thispagestyle{empty}

\begin{center}
 { \LARGE\bf  Covariate-Adaptive
Randomization in Clinical Trials without Inflated Variances}  \ARXIV{2602.10760}
\end{center}

\begin{center} {\sc
\href{https://mypage.zjgsu.edu.cn/tjjy/zlx2_en/main.htm}{\blue{Li-Xin Zhang}}\footnote{Research supported by National Key
R\&D Program of China (No. 2024YFA1013502),  NSF of China (Grant Nos. U23A2064) and   the Summit Advancement Disciplines of Zhejiang Province (Zhejiang Gongshang University - Statistics).
}
}\\
{\sl \small School  of Statistics and Mathematics, Zhejiang Gongshang University, Hangzhou 310018} \\
(Email:stazlx@mail.zjgsu.edu.cn)    \\
\end{center}

\begin{abstract} 
 Covariate adaptive randomization (CAR)  procedures are extensively used to reduce the likelihood of covariate imbalances occurring in clinical trials. 
 In   literatures, a lot of    CAR   procedures have been proposed so that the specified covariates are balanced well between treatments. However, the variance of the imbalance of the unspecified covariates may be inflated comparing to the one under the simple randomization. The variance inflation causes the usual test of   treatment effects being not valid and adjusting the test being   not an easy work. In this paper, we propose a new kind of covariate adaptive randomization   procedures to balance covariates between two treatments  with a ratio  $\rho:(1-\rho)$.  Under this kind of CAR procedures, the convergence rate of the imbalance of the specified covariates is $o(n^{1/2})$, and at the same time the asymptotic variance of the imbalance of any unspecified (observed or unobserved) covariates does not exceed the one under the simple randomization. The "shift problem” found by Liu, Hu, and Ma  (2025)  will not appear under the new CAR procedures.  

{\bf Keywords:} Balancing covariates;  Covariate adaptive randomization; Simple randomization; Pocock and Simon's procedure; Clinical trial; shift problem

{\bf AMS 2010 subject classifications:} Primary 60F05; Secondary 62G10
\end{abstract}

\section{Introduction}
\label{s:intro}
\setcounter{equation}{0}

Covariate adaptive randomization (CAR) procedures have often been implemented for
balancing treatment allocation over influential covariates in clinical trials \citep{McEntegart2003, Rosenberger2015, Taves2010, Lin2015}. The main purposes are to enhance the comparability of 
treatment groups and to increase the efficiency of the statistical inference.
Many CAR procedures have been proposed to balance treatment assignments within strata and over covariate margins, provided that the covariates under consideration are discrete  (categorical with two or more levels) \citep{Pocock1975, Antognini2011, Hu2012, Rosenberger2015}. The theories on CAR randomization procedures are  established progressively in the past decade\citep{Hu2012,Ma2015,HuZhang2020,Hu2022}. For  balancing continuous covariates, besides the method of discretization, a variety of methods  have been proposed by minimization of prescribed imbalance measures
\citep{Hoehler1987,Nishi2004, Endo2006, Stigsby2010,Ciolino2011,Lin2012,Ma2013,Qin2016,Jiang2017,Li2019}.  Recently,  
\cite{Ma2024} unified a lot of CAR procedures by  proposing a general CAR procedure for balancing   specified covariate features $\bm \phi(\bm X_i)$ between two treatments $1$ and $2$, where   $\bm\phi(\bm X_i)=(\phi_1(\bm X_i),\ldots,\phi_q(\bm X_i))$ can contain additional covariate features besides the covariate vector $\bm X_i=(X_{i,1},\ldots,X_{i,p})$ of a unit in clinical trals, such as quadratic and interaction terms.  It is shown that the convergence rate of the imbalance vectors $\sum_{i=1}^{n}(T_i-1/2)\bm \phi(\bm X_i)$ is $o_P(n^{\epsilon_0})$, $\epsilon_0<1/2$, where $n$ is the sample size, $\bm X_i$ is the covariate vector of the $i$-th unit, and $T_i=1$, $0$ stands for the $i$-unit being allocated to treatment $1$ and $2$, respectively.   Under some suitable conditions on the density  of $\bm\phi(\bm X_i)$, $\sum_{i=1}^{n}(T_i-1/2)\bm \phi(\bm X_i)$ can attain the fastest convergence rate $O_P(1)$,  and at the same time, for any unspecified covariate feature $m(\bm X_i)$,  the asymptotic normality of its normalized imbalance   $\sum_{i=1}^{n}(T_i-1/2)  m(\bm X_i)/\sqrt{n}$ holds and the asymptotic variance $\sigma_m^2$ is nonzero when $m(\bm X_i)$ is not a linear function of $\bm \phi(\bm X_i)$. However, $\sigma_m^2$ may exceed $\Ex[m^2(\bm X_i)]/4$ that is the variance under the complete randomization in which each unit is allocated to treatments $1$ and $2$ with equal probability $1/2$. This issue is referred to as the variance inflation. The variance inflation  under the procedure of \cite{Pocock1975}  is recently found by \cite{Zhao2022} by simulation. Because the asymptotic variance of the  test of   treatment effects is usually a function of $\sigma_m^2$ for some unspecified covariate feature $m(\bm X_i)$,
the variance inflation causes such test being not valid \citep{Ma2024,Zhang2023}.  The closed form of $\sigma_m^2$ is also unknown so that   estimating it and adjusting the test are not easy  works. A lot of methods for the adjustment to obtain robust statistical inferences in the CAR clinical trials have being  proposed\citep{Bugin2018,YeShao2020,LiuYang2020,MaTuLiu2022,Wang2023}.   The purpose of this paper is to propose a new kind of CAR procedures so that the specified covariate features $\bm \phi(\bm X_i)$ can be balanced well and at the same time the asymptotic variance $\sigma_m^2$ of the normalized imbalance of any unspecified covariate feature $m(\bm X_i)$ is not inflated and has a closed form. 

From the proofs \citep{HuZhang2020, Ma2024, Zhang2023}, we can find that the convergence of $\sum_{i=1}^{n}(T_i-1/2) m(\bm X_i)/\sqrt{n}$ and the finiteness of   its asymptotic variance $\sigma_m^2$  are strongly based on the theory of Markov chains and the property that 
the allocation probabilities are  symmetric about treatments so that,  as a Markov chain, $\sum_{i=1}^{n}(T_i-1/2)\bm \phi(\bm X_i)$ 
has a symmetric   invariant  distribution and thus $(T_i-1/2) m(\bm X_i)$s have zero means under the invariance  distribution for any $m(\bm X_i)$. 
Recently, \cite{Liu2025}  generalized \cite{Ma2024}'s procedure to  balance covariances between two treatments with a ratio  $\rho:(1-\rho)$ instead of half to half. The convergence rate of the  imbalance vectors $\sum_{i=1}^{n}(T_i-\rho)\bm \phi(\bm X_i)$ of the  specified covariate feature $\bm\phi(\bm X_i)$ is also shown to be  $o_P(n^{\epsilon_0})$, $\epsilon_0<1/2$. However, by their theory, for an unspecified covariate feature $m(\bm X)$, $\sum_{i=1}^{n}(T_i-\rho) m(\bm X_i)/n$ may converge to a nonzero constant $c_m$ due to the fact that $\sum_{i=1}^{n}(T_i-\rho)\bm \phi(\bm X_i)$ is no longer  a symmetric Markov chain when $\rho\ne 1/2$. This  issue is refer to as
"shift problem" of covariate $m(\bm X_i)$.  The shift problem is a bad news because, in  testing hypotheses  
in the covaraite-adaptive randomized clinical trials, $\sum_{i=1}^{n}(T_i-\rho) m(\bm X_i)=o_P(n)$ is a basic condition \citep{Ma2015,Ma2020,Ma2024,Zhu2025,Liu2023,Liu2025}, and sometimes the convergence rate $O_P(\sqrt{n})$ is needed \citep{MaTuLiu2022}.  Basing on data in clinical trials under a randomization procedure which has the shift problem,  the estimator may be not consistent and the statistical inference may lead to a wrong conclusion.  We will show that our proposed CAR procedures are valid for all the case of $0<\rho<1$. Under the proposed CAR procedure, the "shift problem" will no longer appear  and the asymptotic variance of $\sum_{i=1}^{n}(T_i-\rho) m(\bm X_i)/\sqrt{n}$ will be always controlled by $\rho(1-\rho)\Ex[m^2(\bm X_i)]$, the variance under the simple randomization in which each unit is randomized to treatments $1$ and $2$ respectively with probabilities $\rho$ and $1-\rho$,  for any $m(\bm X_i)$. 
 
 In the next section, the framework of the new CAR procedures are proposed with theories on its properties. In Section \ref{sec:application}, as an application, the properties of a simple test of treatment effects under the proposed CAR procedure are studied based on the observed responses on treatments. It is shown that the test always controls the type I error rate without any information of the covariates, and the test can be adjusted to have the precise  type I error rate and increase the power when the data of 
 the covariate features  $\bm\phi(\bm X_i)$s   in the randomization stage are also available in the  analysis stage. The proofs are given in the last section.

\section{  New family of CAR procedures}
\label{s:procedure}
\setcounter{equation}{0}

\subsection{Framework}\label{sec:Procedure_Framework}

Suppose that $n$ experimental units are to be assigned to two treatment groups.
Let $T_i$ be the assignment of the $i$-th unit, i.e., $T_i=1$ for treatment 1 and $T_i=0$
for treatment 0.
Consider $p$ covariates available for each unit and use $\boldsymbol{X}_i=(X_{i,1}, ...,X_{i,p}) \in \mathbb{R}^p$ to denote the covariates of the $i$-th unit. Besides  $\bm X_i$, the unit may have other random features such as responses and other covaraites which are observable or unobservable and not considered in the randomization, we denote them by $Z_i,W_i$.
We assume that   $\{(\bm{X}_i, Z_i,W_i); i=1,2,\ldots,n\}$ are independent and identically distributed as a random vector $(\bm{X}, W, Z)$.
 We consider a procedure to balance general covariate features $\bm\phi(\bm X_i)$, defined by a feature map $\bm\phi(\bm x): \mathbb R^p\to \mathbb R^q$ that maps $\bm X_i$ into a $q$-dimensional feature space. Here, $q$ is usually larger than $p$ so
that $\bm \phi(\bm X_i)$   has more features than the original covariates. 
The  purpose of the covariate-adaptive randomization is to balance $\bm\phi(\bm X_i)$s between treatments $1$ and $2$ with a ratio $\rho:(1-\rho)$, where $0<\rho<1$.
The vector $\bm \Lambda_n=\sum_{i=1}^n (T_i-\rho)\bm \phi(\bm X_i)$ is the imbalance measure of the covariate feature $\bm \phi(\bm X)$. 
The numerical imbalance measure $Imb_n$ is defined 
  as the squared Euclidean norm of the imbalance vector $\bm \Lambda_n$, i.e.,
  $$Imb_n=\Big\|\sum_{i=1}^n (T_i-\rho)\bm \phi(\bm X_i)\Big\|^2. $$
  In this paper, we denote $\langle \bm x, \bm y\rangle=\bm x\bm y^{\prime}=\sum_{t=1}^dx_iy_j$, $\|\bm x\|=\sqrt{\langle \bm x, \bm x\rangle}$, $\bm x^{\otimes 2}=\bm x^{\prime}\bm x=\big(x_ix_j\big)_{d\times d}$, for real vectors $\bm x=(x_1, \ldots, x_d)$ and $\bm y=(y_1, \ldots, y_d)$.  
 
 Notice that
\begin{align*}
 Imb_{n}-Imb_{n-1}= &\|\bm \Lambda_{n}\|^2-\|\bm \Lambda_{n-1}\|^2\\
 = & 2(T_n-\rho)\langle\bm\Lambda_{n-1},\phi(\bm X_{n})\rangle
+(T_n-\rho)^2\|\bm \phi(\bm X_{n})\|^2. 
\end{align*}
When the $n$-th unit is ready to be randomized to treatments, the values of $\bm\Lambda_{n-1}$ and $\bm X_{n}$ are available and given. The decreasing or increasing of the   numerical imbalance measure $Imb_{n-1}$ depends on the value of $\langle\bm\Lambda_{n-1},\bm\phi(\bm X_{n})\rangle$ and the allocation probability. We define the allocation probability as a decreasing function of $\langle\bm\Lambda_{n-1},\bm\phi(\bm X_n)\rangle$ so that when the value of $|\langle\bm\Lambda_{n-1},\bm\phi(\bm X_{n})\rangle|$ is large, the value of $Imb_n$ will be drifted back.  
The new $\bm\phi$-based CAR procedure  is defined as follows:
\begin{enumerate}
	
	\item[(1)]
	Randomly assign the first unit    to  treatment $1$ with the probability  $\rho$ and   to treatment $2$ with the probability $1-\rho$.
	
	\item[(2)]
	 Suppose that $(n-1)$ units have been assigned to a treatment $(n > 1)$ and the $n$-th unit
is to be assigned, and the results of assignments $T_1,\ldots, T_{n-1}$ of previous stages and all covariates $\bm X_1,\ldots, \bm X_n$ up  to the $n$-th unit are observed.  Calculate the  imbalance vector $\bm\Lambda_{n-1}$.
	
	\item[(3)]Assign the $n$-th unit to the treatment with the probability
 	\begin{align}\label{eq:allocation-2}
	\ell_n=:\Prob(T_n=1|\bm{X}_n,...,\bm{X}_{1},T_{n-1},...,T_{1}) =\ell \left( \frac{\langle\bm \Lambda_{n-1},\bm \phi(\bm X_n)\rangle}{(n-1)^{\gamma}}\right),
	\end{align}
	where $0<\gamma<1$, $\ell(x):\mathbb R\to (0,1)$  is a non-increasing function with
\begin{align}
  \ell(0)&=\rho, \;
  \ell^{\prime}(0)<0 \text{ and } \ell(x) \text{ is twice differentiable at } x=0. \label{eq:allocationfunction1}
\end{align}
	\item[(4)]
	Repeat the last two steps until all units are assigned.
\end{enumerate}
The function $\ell(x)$ in \eqref{eq:allocation-2} is called the allocation function.  A proposed  choice of $\ell$ is 
\begin{equation}\label{eq:normallocation2}
\ell (x)= \frac{\big[2\rho\Phi(-x)\big]\wedge 1+ 1-\big[2(1-\rho)\Phi(x)\big]\wedge 1}{2}.
\end{equation}
 where  $\Phi$ is the standard normal distribution, $x\wedge y=\min\{x,y\}$ and $x\vee y=\max\{x,y\}$. Another simple choice of $\ell$ is
\begin{equation}\label{eq:normallocation1}
\ell (x)= \Phi(-x+u_{\rho}),
\end{equation}
where     $u_{\alpha}$ is the $\alpha$th quantile of a standard normal distribution   such that $\Phi(u_{\alpha})=\alpha$.  
 
 According to \eqref{eq:thm:Property1:3}, the  parameter $\gamma$ controls the size of the 
 imbalance measure $Imb_n$.  The smaller the parameter $\gamma$ ,  the smaller is $Imb_n$, but possibly the larger is the imbalance $\|\sum_{i=1}^n (T_i-\rho)Z_i\|^2$ of the unspecified feature $Z_i$  that is not a linear fuction of $\bm\phi(\bm X_i)$ due to \eqref{eq:thm:Property1:9}. The value of  $\gamma$ is suggested to be in $[0.5,1)$ so that the asymptotic expectation of $\|\sum_{i=1}^n (T_i-\rho)Z_i\|^2/n$ does not exceed the one under the simple randomization for any $Z_i$s.

\subsection{Theoretical Properties}


In this subsection we shall study the theoretical properties of the CAR  procedures with a feature map $\bm\phi(\bm{x})=(\phi_1(\bm{x}),\ldots,\phi_q(\bm{x})):\mathbb{R}^p\mapsto\mathbb{R}^q$.
Our goal is to obtain the convergence rate of covariate imbalance measured by
$\bm{\Lambda}_n=\sum_{i=1}^n(T_i-\rho)\bm\phi(\bm{X}_i)$ and the asymptotic normality of $\sum_{i=1}^n(T_i-\rho)Z_i$. 

To obtain the asymptotic properties, we require the following assumptions.

\begin{assumption}\label{asm:iid}
The   sequence $\{(\bm{X}_i,Z_i,W_i); i=1,2,\ldots\}$ is a sequence of independent and identically distributed as a random vector $(\bm{X}, W, Z)$.
\end{assumption}

\begin{assumption}\label{asm:moment}
  The feature map $\phi(\bm{X})$ satisfies that $\Ex[\|\phi(\bm{X})\|^{r_0}]$ is finite, $r_0> 2$.
\end{assumption}

  In the sequel of this paper,  for  sequences of real numbers $\{a_n\}$, $\{b_n\}$ $(b_n>0)$ and a sequence of random vectors $\{\bm Y_n\}$,   $a_n=O(b_n)$ means $|a_n/b_n|\le C$ for some $C>0$, $a_n=o(b_n)$ means $a_n/b_n\to 0$,  $\bm Y_n=O(b_n)$ (resp. $o(b_n)$)  in 
 $L_r$ means $(\Ex\|\bm Y_n\|^r)^{1/r}=O(b_n)$ (resp. $o(b_n)$), $\bm Y_n=o_P(b_n)$ means that $\|\bm Y_n\|/b_n\to 0$ in probability, and $\bm Y_n=O_P(b_n)$ means that $\|\bm Y_n\|/b_n$ is bounded in probability.   A positive constant which depends on $\alpha$ and $\beta$ is denoted by $C_{\alpha,\beta}$. 

\begin{theorem}\label{thm:Property1}
Suppose that Assumptions  \ref{asm:iid} and \ref{asm:moment} are satisfied.
\begin{description}
  \item[\rm (i) ] We have
  \begin{equation}\label{eq:thm:Property1:1}\left(\Ex\left[\|\bm \Lambda_n\|^r\right]\right)^{1/r} \le  C_{r,r_0}\Big(n^{\frac{\gamma}{2} } +n^{\frac{\gamma}{2}- \frac{(r_0+2-r)\gamma-2}{2r(r_0+1-r)}\big)}\Big), \; 2\le r\le r_0,\end{equation}
\begin{equation}\label{eq:thm:Property1:2}
\left(\Ex\left[\|\bm \Lambda_n\|^r\right]\right)^{1/r}  
\le    C_{r,r_0}\Big(n^{\frac{\gamma}{r} } +n^{ \frac{(r_0-2)\gamma +2}{2r(r_0+1-r)}}\Big), \;
 \; 1\le r \le 2.\end{equation}
In particular, 
\begin{equation}\label{eq:thm:Property1:3} \Ex \left[\|\bm \Lambda_n\|^2\right]=O\big(n^{\gamma}+n^{\frac{(r_0-2)\gamma +2}{2(r_0-1)}}\big) \text{ and } \Ex\left[\|\bm \Lambda_n\|^{r_0}\right] =O(n^{\frac{\gamma r_0}{2}+ 1-\gamma }) =o(n^{r_0/2}). 
\end{equation}
  \item[\rm (ii)] 
If   $\gamma> \frac{2}{3r_0-2}$ (more precisely,   $\gamma>\frac{4}{r_0^2+5}$ when $2<r_0\le 3$, $\gamma> \frac{2}{3r_0-2}$ when $3\le r_0\le 4$,  $\gamma>\frac{8}{(r_0+2)^2+4}$ when $r_0\ge 4$), then
\begin{equation}\label{eq:thm:Property1:7}
\Ex|\ell_n-\rho| \to 0
\end{equation} 
and 
\begin{equation}\label{eq:thm:Property1:8}
  \frac{\Ex\left|\sum_{i=1}^n (T_i-\rho)Z_i\right|}{n} \to  0 \text{ if } \Ex[|Z|]<\infty.
\end{equation} 
\item[\rm (iii)]  If $r_0\ge 4$,  $\gamma\ge 2/r_0$ and $\Ex Z^2<\infty$, then
  \begin{align}\label{eq:thm:Property1:9}
     \sum_{i=1}^n (T_i-\rho)Z_i=&\sum_{i=1}^n\big((T_i-\rho)\widetilde{Z}_i-\Ex[(\ell_i-\rho)\widetilde{Z}_i|\bm \Lambda_{i-1}]\big)\nonumber\\
     & +O(n^{\gamma/2}+n^{1-\gamma}) \text{ in } L_1,
\end{align} 
where   $\widetilde{Z}=Z-\langle\bm\phi(\bm X),\bm x_0\rangle$ and 
$\bm x_0=\arg\min\{ \Ex\big(Z-\langle\bm\phi(\bm X),\bm x\rangle\big)^2:\bm x\in \mathbb{ R}^q\}$.
\item[\rm (iv)] If $r_0\ge 4$,  $\gamma>\frac{1}{2}$, 
$\Ex Z^2<\infty$, $\Ex W^2<\infty$ and $\Ex[W]=0$, then
\begin{equation} \label{eq:thm:Property1:10}   \left(\frac{\sum_{i=1}^n (T_i-\rho)Z_i}{ \sqrt{n}}, \frac{\sum_{i=1}^n W_i}{ \sqrt{n}}\right)
 \overset{d}\to N(0,\vec{\sigma}_Z^2)\times N(0,\Ex W^2),
  \end{equation}
where $\vec{\sigma}_Z^2=\rho(1-\rho)\Ex[\widetilde{Z}^2]$, and
\begin{equation} \label{eq:thm:Property1:11} \frac{\Ex\Big(\sum_{i=1}^n(T_i-\rho)Z_i, \sum_{i=1}^nW_i\Big)^{\otimes 2}}{n}\to diag(\vec{\sigma}_Z^2,\Ex W^2),
  \end{equation} 
  when $(r_0-2)\gamma\ge 2$.
\end{description}
\end{theorem}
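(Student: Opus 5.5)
The proof rests on a one-step drift analysis of the non-homogeneous Markov chain $\bm\Lambda_n$, followed by a martingale decomposition of $\sum_{i}(T_i-\rho)Z_i$. Write $\bm\phi_n=\bm\phi(\bm X_n)$ and $\mathcal F_{n-1}=\sigma(\bm X_i,T_i,i\le n-1)$. From $\bm\Lambda_n=\bm\Lambda_{n-1}+(T_n-\rho)\bm\phi_n$ we get
$$\Ex[\|\bm\Lambda_n\|^2|\mathcal F_{n-1}]=\|\bm\Lambda_{n-1}\|^2+2\Ex\big[(\ell_n-\rho)\langle\bm\Lambda_{n-1},\bm\phi_n\rangle|\mathcal F_{n-1}\big]+O(\Ex\|\bm\phi\|^2).$$

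\textbf{Part (i).} Since $\ell$ is non-increasing with $\ell(0)=\rho$ and $\ell'(0)<0$, we have $(\ell(x)-\rho)x\le -c\,(x^2\wedge |x|)$ for some $c>0$. Put $x=\langle\bm\Lambda_{n-1},\bm\phi_n\rangle/(n-1)^\gamma$. The drift is then bounded by
$$-c\,\Ex\big[(n-1)^{-\gamma}\langle\bm\Lambda_{n-1},\bm\phi_n\rangle^2\wedge|\langle\bm\Lambda_{n-1},\bm\phi_n\rangle|\,\big|\,\mathcal F_{n-1}\big].$$
Because $\Ex[\bm\phi^{\otimes2}]$ is positive definite (after dropping degenerate directions), $\Ex\langle\bm u,\bm\phi\rangle^2\ge\lambda\|\bm u\|^2$. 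A truncation of $\|\bm\phi\|$ at a level $M$, with the tail controlled by the $r_0$-th moment via Markov's inequality, then converts the drift into $-c'\min(\|\bm\Lambda_{n-1}\|^2/n^\gamma,\|\bm\Lambda_{n-1}\|)$ up to error terms of size $\|\bm\Lambda_{n-1}\|\,M^{1-r_0}$.

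The same computation applied to $\|\bm\Lambda_n\|^r$, via Taylor expansion of $(a+b)^{r/2}$, gives a recursion of the form
$$a_n\le a_{n-1}-c\,n^{-\gamma}a_{n-1}^{1+2/r}+C\,a_{n-1}^{1-2/r}+\text{(tail terms)},$$
where $a_n=\Ex\|\bm\Lambda_n\|^r$; the nonlinear moment is lower-bounded by Jensen. Solving this recursion by induction yields $a_n^{1/r}\lesssim n^{\gamma/2}$, plus the tail contribution. Optimizing the truncation level $M$ against $r_0$ produces the second exponent in \eqref{eq:thm:Property1:1}. The case $1\le r\le2$ follows by interpolating between the $L_2$ bound and the crude bound $\|\bm\Lambda_n\|\le\sum_i\|\bm\phi_i\|$, and \eqref{eq:thm:Property1:3} is the case $r\in\{2,r_0\}$. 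This recursion with moment truncation is the main obstacle: getting exactly the stated exponents will require careful bookkeeping of the truncation.

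\textbf{Part (ii).} Using $|\ell_n-\rho|\le C(|x|\wedge1)$, we get
$$\Ex|\ell_n-\rho|\le C\,n^{-\gamma}\,\Ex\|\bm\Lambda_{n-1}\|\,\|\bm\phi_n\|\wedge 1,$$
and this tends to $0$ whenever $\Ex\|\bm\Lambda_n\|=o(n^\gamma)$. By \eqref{eq:thm:Property1:2} with $r=1$, this holds exactly under the stated conditions on $\gamma$; the three regimes arise from choosing the best $r$ in (i) and applying Hölder. For \eqref{eq:thm:Property1:8}, decompose
$$\sum_i(T_i-\rho)Z_i=\sum_i(T_i-\ell_i)Z_i+\sum_i(\ell_i-\rho)Z_i.$$
The first sum is a martingale and is $o(n)$ in $L_1$ after truncating $Z$. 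The second is bounded by truncation, $\Ex|Z|I\{|Z|>K\}+K\,\Ex|\ell_i-\rho|$, and a Cesàro average.

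\textbf{Part (iii).} Write $Z_i=\widetilde Z_i+\langle\bm\phi_i,\bm x_0\rangle$, so that $\sum_i(T_i-\rho)\langle\bm\phi_i,\bm x_0\rangle=\langle\bm\Lambda_n,\bm x_0\rangle=O(n^{\gamma/2})$ in $L_1$ by (i). For the $\widetilde Z$ part, since $\ell_i$ depends only on $(\bm\Lambda_{i-1},\bm X_i)$, subtracting $\Ex[(\ell_i-\rho)\widetilde Z_i|\bm\Lambda_{i-1}]$ leaves martingale differences. The conditional term itself is expanded by Taylor: the linear term $\ell'(0)(i-1)^{-\gamma}\Ex[\langle\bm\Lambda_{i-1},\bm\phi\rangle\widetilde Z]$ vanishes because $\Ex[\bm\phi\widetilde Z]=0$ by the definition of $\bm x_0$. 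So the conditional term is $O\big(i^{-2\gamma}\|\bm\Lambda_{i-1}\|^2\big)$ plus a tail remainder, and summing with (i) gives the remainder $O(n^{1-\gamma})$. Note that \eqref{eq:thm:Property1:9} keeps the conditional term inside the martingale sum, so this expansion is only needed later, in (iv).

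\textbf{Part (iv).} When $\gamma>1/2$, the remainder in \eqref{eq:thm:Property1:9} is $o(\sqrt n)$, and by the Taylor bound the conditional-mean terms sum to $o(\sqrt n)$. It then suffices to apply the martingale CLT to the differences $(T_i-\rho)\widetilde Z_i$ jointly with $W_i$. The conditional variance is
$$\Ex[\ell_i(1-\ell_i)\widetilde Z^2]+\Ex[(\ell_i-\rho)^2\widetilde Z^2]\to\rho(1-\rho)\Ex\widetilde Z^2,$$
by (ii) and uniform integrability. The cross-covariance with $W_i$ is $\Ex[(\ell_i-\rho)\widetilde Z W]\to0$, and the Lindeberg condition follows from the second moments. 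Finally, \eqref{eq:thm:Property1:11} follows from the same decomposition computed in $L_2$; the extra condition $(r_0-2)\gamma\ge2$ is what makes the remainder $o(\sqrt n)$ in $L_2$ rather than only in $L_1$.
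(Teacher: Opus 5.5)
Your outline of (ii)--(iv) is essentially the paper's route. It uses the Taylor expansion of $\Ex[(\ell_i-\rho)\widetilde Z_i|\bm\Lambda_{i-1}]$ with the linear term killed by $\Ex[\bm\phi\widetilde Z]=\bm 0$, summation via the $L_2$ bound, and the martingale CLT. But part (i), on which all of it rests, has a genuine gap.

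The drift you obtain is $-c\,Q_{n,r}$ with $Q_{n,r}=\Ex\big[\|\bm\Lambda_n\|^{r-1}(\|\bm\Lambda_n\|/n^{\gamma}\wedge 1)\big]$. This cannot be bounded below by a function of $a_n=\Ex\|\bm\Lambda_n\|^r$. In terms of $y=\|\bm\Lambda_n\|^r$, the integrand is $y/n^{\gamma}$ for $y\le n^{\gamma r}$ and $y^{1-1/r}$ beyond, which is concave, so Jensen goes the wrong way. Separately, your term $a_{n}^{1+2/r}$ would give equilibrium $n^{\gamma/4}$, not $n^{\gamma/2}$.

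The region $\{\|\bm\Lambda_n\|>n^\gamma\}$ is where $\ell$ saturates and the restoring drift is only $\|\bm\Lambda_n\|^{r-1}$. With only $r_0$ moments, a single large $\|\bm\phi_i\|$ can throw $\bm\Lambda$ into that region, after which it shrinks by only $O(1)$ per step. That region, not a truncation of $\|\bm\phi\|$, is the source of the second exponent in \eqref{eq:thm:Property1:1}. In the paper the truncation is at a fixed $M$ with $\Ex[\|\bm\phi\|^2I\{\|\bm\phi\|\ge M\}]\le\lambda_{\min}/2$. The inequality $\langle\bm x,\bm\phi\rangle^2\le[1\vee M\|\bm x\|]\big(\langle\bm x,\bm\phi\rangle^2\wedge|\langle\bm x,\bm\phi\rangle|\big)+\|\bm x\|^2\|\bm\phi\|^2I\{\|\bm\phi\|\ge M\}$ then gives $\Ex[\langle\bm x,\bm\phi\rangle^2\wedge|\langle\bm x,\bm\phi\rangle|]\ge\frac{\lambda_{\min}}{2M}(\|\bm x\|^2\wedge\|\bm x\|)$ with no additive error, so there is nothing to optimize.

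Three ingredients are missing:

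(a) The crude bound $\Ex\|\bm\Lambda_n\|^{r_0}\le Cn^{\gamma(r_0-2)/2+1}$, obtained by summing the drift inequality with its negative part dropped. So the $r=r_0$ half of \eqref{eq:thm:Property1:3} is an input, not a corollary.

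(b) The split on $\{\|\bm\Lambda_n\|\le n^\gamma\}$ plus H\"older: $\Ex\|\bm\Lambda_n\|^r\le n^\gamma Q_{n,r}+Q_{n,r}^{1/f}(\Ex\|\bm\Lambda_n\|^{r_0})^{1/g}$, with $g=r_0+1-r$ and $1/f+1/g=1$.

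(c) Choosing $m\le n$ as the last index with $-cQ_{m,r}+Cm^{\gamma(r-2)/2}\ge 0$. Then $\Ex\|\bm\Lambda_k\|^r$ decreases for $k>m$ while $Q_{m,r}\lesssim m^{\gamma(r-2)/2}$.

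Combining these gives exactly the exponent in \eqref{eq:thm:Property1:1}.

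Similarly, for $1\le r\le 2$, Lyapunov or interpolation from the $L_2$ bound does not give \eqref{eq:thm:Property1:2}. For $r_0=2.5$, $r=1.75$, $\gamma=0.36$, \eqref{eq:thm:Property1:2} yields $\Ex\|\bm\Lambda_n\|=o(n^\gamma)$ but the $L_2$ bound does not. So the refined threshold $\gamma>4/(r_0^2+5)$ in (ii) would be lost. The paper instead uses a separate drift inequality for $r\in[1,2]$ with the same split.

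Minor slips:
\begin{itemize}
\item \eqref{eq:thm:Property1:2} at $r=1$ gives only $O(n^\gamma)$, not $o(n^\gamma)$.
\item The Taylor expansion in (iii) is needed for \eqref{eq:thm:Property1:9} itself, since its remainder is exactly $\langle\bm\Lambda_n,\bm x_0\rangle+\sum_i\Ex[(\ell_i-\rho)\widetilde Z_i|\bm\Lambda_{i-1}]$.
\item The CLT in (iv) must be applied to the centred differences $(T_i-\rho)\widetilde Z_i-\Ex[(\ell_i-\rho)\widetilde Z_i|\bm\Lambda_{i-1}]$, not to $(T_i-\rho)\widetilde Z_i$.
\end{itemize}
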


  \begin{remark}\label{rk:1}
 \begin{description}
 \item[\rm (i) Balance of the specified covariates:] By \eqref{eq:thm:Property1:3}, $\bm \Lambda_n=O_P(n^{\epsilon_0})$  for an $\epsilon_0\in [\gamma/2,1/2)$. If all moments of $\|\bm \phi(\bm X)\|$ are finite, then  $\bm \Lambda_n=O_P(n^{\gamma/2})$.
     \item[\rm (ii) No shift problem:]   It is easily seen that  \eqref{eq:thm:Property1:8} is equivalent to 
      \begin{equation}\label{eq:thm:Property1:8ad}\frac{\sum_{i=1}^n T_i Z_i}{\sum_{i=1}^n T_i}\overset{P}\to \Ex Z\;\text{ and } \;
         \frac{\sum_{i=1}^n (1-T_i) Z_i}{\sum_{i=1}^n (1-T_i)}\overset{P}\to \Ex Z  \;\text{ if } \;
         \Ex|Z|<\infty. 
      \end{equation}
      As the law of large numbers, \eqref{eq:thm:Property1:8ad} is essential for an estimator to be consistent. 
      A CAR procedure  is proposed \cite{Liu2025} for balancing covariate features $\bm \phi(\bm X_i)$ between treatments with an unequal ratio $\rho:(1-\rho)$. When the covariate  $m(\bm X_i)$ is not the linear function of  $\bm \phi(\bm X_i)$, there is no theoretical guarantee for  $\frac{1}{n}\sum_{i=1}^n (T_i-\rho)m(\bm X_i)  \overset{P}\to 0$,   it may happen that $\frac{1}{n}\sum_{i=1}^n (T_i-\rho)m(\bm X_i) \overset{P}\to c_m\ne 0$ and so \eqref{eq:thm:Property1:8ad} no longer holds. This issue is referred as "shift problem".    The simulation results presented in the Supplementary Material of \cite{Liu2025}  show that the shift problem on the additional covariates is common when balancing continuous covariates by using their CAR procedure. Now,   \eqref{eq:thm:Property1:8} shows that the shift problem no longer exists under our proposed CAR procedure.

  \item[\rm (iii) Closed form of variance:]  $\langle\bm\phi(\bm X),\bm x_0\rangle$ is the orthogonal projection of $Z$ on $\bm \phi(\bm X)$, we denote it by $\Proj[Z|\bm \phi(\bm X)]$. Thus, $\vec{\sigma}_{Z}^2=\Ex\big(Z-\Proj[Z|\bm \phi(\bm X)]\big)^2$.
      This closed form makes $\vec{\sigma}_{Z}^2$ can be estimated basing on the observations of $Z$ and $\bm\phi(\bm X)$.
      
       It is easily seen that 
$\Ex\big(Z-\Proj[Z|\bm \phi(\bm X)]\big)^2  =\Ex Z^2 -\Ex\big(\Proj[Z|\bm \phi(\bm X)]\big)^2\le \Ex Z^2$.
Thus,  $\vec{\sigma}_Z^2=0$ if and only if $Z\in Span\{\bm \phi(\bm X)\}=: \{\langle\bm x,\bm \phi(\bm X)\rangle:\bm \in \mathbb R^q\}$, and $\vec{\sigma}_Z^2=(1-\rho)\rho\Ex[Z^2]$ if and only if $\Ex[\bm\phi(\bm X)Z]=\bm 0$, i.e., $Z\perp\bm \phi(\bm X)$. As a result, $\vec{\sigma}_Z^2=\Var(Z-\Ex[Z|\bm\phi(\bm X)])+\vec{\sigma}_{\Ex[Z|\bm\phi(\bm X)]}^2$. Further, if $1\in Span\{\bm\phi(\bm X)\}$, then $\vec{\sigma}_Z^2=\vec{\sigma}_{Z-\Ex Z}\le \Var(Z)$. 

\item[\rm (iv) No variance inflation:] 
    The simple randomization is a randomization procedure which randomizes each unit to treatment $1$ with the same probability $\rho$. For the simple randomization, $\vec{\sigma}_Z^2=\rho(1-\rho)\Ex Z^2$.   So, the proposed CAR procedure always has smaller asymptotic variance than the  simple randomization for all observed or unobserved feature $Z_i$ with $\Proj[Z|\bm \phi(\bm X)]\ne 0$. The variance inflation  is no longer a problem under our proposed CAR procedure. 
     \item[\rm (v) Independence of exogenous variables:]   
         The normalized imbalance $\sum_{i=1}^n (T_i-\rho)Z_i/\sqrt{n}$  of $Z_i$s is an endogenous variable  of the design, and the normalized sum $\sum_{i=1}^nW_i/\sqrt{n}$ of $W_i$s can be regraded as an  exogenous variable. \eqref{eq:thm:Property1:10} means that they are  asymptotically independent   regardless $W_i$ and $Z_i$ themselves are independent or not.
   \end{description}
\end{remark}

\begin{remark}\label{rk:2} If $\ell(x)$ is thrice differentiable at $x=0$, and $\ell^{\prime\prime}(0)=0$ or $\Ex[(\bm\phi(\bm X))^{\prime}\bm \phi(\bm X)\widetilde{Z}]=\bm 0$, then  \eqref{eq:thm:Property1:10} remains true when $r_0\ge 7$ and $1/3<\gamma<1$. 

The allocation function $\ell$ as chosen in   \eqref{eq:normallocation1} with $\rho=1/2$  or \eqref{eq:normallocation2}
satisfies 
$\ell^{\prime}(0)=-\frac{1}{\sqrt{2\pi}}$ and $\ell^{\prime\prime}(0)=0$.

If the distribution of $(\bm \phi(\bm X), Z)$ is symmetric, i.e. $-(\bm \phi(\bm X), Z)$ and $ (\bm \phi(\bm X), Z)$ have the same distribution, then $\Ex[(\bm\phi(\bm X))^{\prime}\bm \phi(\bm X)\widetilde{Z}]=\bm 0$.

If $\Ex[Z|\bm \phi(\bm X)]\in Span\{\bm \phi(\bm X)\}$, then $ \Ex[\widetilde{Z}|\bm \phi(\bm X)]=0$ and thus $\Ex[(\bm\phi(\bm X))^{\prime}\bm \phi(\bm X)\widetilde{Z}]=\bm 0$.
\end{remark}

In Theorem \ref{thm:Property1} and Remark \ref{rk:2}, for the asymptotic normality \eqref{eq:thm:Property1:10},   $\gamma >1/2$ or  $\gamma>1/3$ is required,  respectively.   Next, we propose an allocation function such that \eqref{eq:thm:Property1:10} may  hold for all $0<\gamma<1$. The function is defined by 
\begin{equation}\label{eq:normallocation3} \ell(x)=\ell_{propose}(x)=\Phi(-|x|+u_{\rho/2})\vee (\rho-\lambda x)\wedge \Phi(|x|+u_{(\rho+1)/2}), 
\end{equation}
 where $\lambda>0$ is a tuning parameter.  In \eqref{eq:normallocation3}, $\Phi(-|x|+u_{\rho/2})$, $\Phi(|x|+u_{(\rho+1)/2})$ can be respectively replaced by $\alpha $, $\beta$ with $0<\alpha<\rho<\beta<1$. 
 
 \begin{theorem}\label{thm:Property2}
Suppose  Assumption  \ref{asm:iid},  and that \ref{asm:moment} is satisfied with $r_0\ge 4$,
$\Ex Z^2<\infty$, $\Ex W^2<\infty$ and $\Ex W=0$. Let $\ell(x)$ be defined as in \eqref{eq:normallocation3}. If $(r_0+2)\gamma>4$, then
\begin{equation}\label{eq:thm:Property2:1}
 \big(\Ex\|\bm{\Lambda}_n\|^r\big)^{1/r}=O(n^{\gamma /2}) \; \text{  for   } r=r_0/2,
 \end{equation}
 \eqref{eq:thm:Property1:10} and \eqref{eq:thm:Property1:11}   hold.
\end{theorem}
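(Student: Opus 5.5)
We prove Theorem \ref{thm:Property2} in two stages: first the moment bound \eqref{eq:thm:Property2:1} for $\bm\Lambda_n$, then the martingale decomposition \eqref{eq:thm:Property1:9} and a martingale CLT. The feature of $\ell_{propose}$ that matters is that it is exactly linear, $\ell(x)=\rho-\lambda x$, on a neighbourhood $|x|\le \delta$ of the origin, and bounded away from $0$ and $1$ outside it. The linear part removes the second-order Taylor term $\ell''(0)$ that forced $\gamma>1/2$ in Theorem \ref{thm:Property1}(iv). The cap keeps the conditional variance of $T_n$ bounded below, so the chain behaves like simple randomization in the tails.

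\textbf{Moment bound.} Write $V_n=\|\bm\Lambda_n\|^2$ and $\bm\phi_n=\bm\phi(\bm X_n)$. From the identity in Section \ref{sec:Procedure_Framework},
\[
\Ex[V_n-V_{n-1}\mid\mathcal F_{n-1}]=2\Ex\big[(\ell_n-\rho)\langle\bm\Lambda_{n-1},\bm\phi_n\rangle\mid \bm\Lambda_{n-1}\big]+\Ex[(\ell_n-\rho)^2\ldots]+O(\Ex\|\bm\phi\|^2).
\]
Since $\ell$ is non-increasing with $\ell(0)=\rho$, we have $(\ell(x)-\rho)x\le 0$ everywhere. Moreover $(\ell(x)-\rho)x\le -c\min(x^2,|x|)$ because of the linear piece and the caps $\alpha<\rho<\beta$. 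With $x=\langle\bm\Lambda_{n-1},\bm\phi_n\rangle/(n-1)^\gamma$, this gives a drift
\[
\Ex[V_n-V_{n-1}\mid\mathcal F_{n-1}]\le C-c\,(n-1)^{\gamma}\,\Ex\big[\min(x^2,|x|)\mid\bm\Lambda_{n-1}\big].
\]
A lower bound of the type $\Ex[\min(x^2,|x|)]\ge c\min\big(\|\bm\Lambda\|^2 n^{-2\gamma},\|\bm\Lambda\| n^{-\gamma}\big)$ follows from nondegeneracy of $\Ex[\bm\phi^{\otimes2}]$ after restricting to the span. We then apply the same argument to $V_n^{s}$ with $s=r/2=r_0/4$, expanding $(V_{n-1}+\Delta)^{s}$ and controlling increments by $\|\bm\phi_n\|^{r_0}$ moments (this is where $r=r_0/2$ and $r_0\ge4$ enter). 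A standard comparison/induction argument then yields $\Ex V_n^{s}=O(n^{\gamma s})$, i.e.\ \eqref{eq:thm:Property2:1}. I expect to replicate the machinery used for \eqref{eq:thm:Property1:1}, with the improvement that no Taylor remainder appears.

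\textbf{Decomposition and CLT.} Set $\widetilde Z_i$ as in Theorem \ref{thm:Property1}(iii) and write
\[
\sum(T_i-\rho)Z_i=\sum(T_i-\rho)\widetilde Z_i+\langle\bm\Lambda_n,\bm x_0\rangle .
\]
The second term is $O(n^{\gamma/2})=o(\sqrt n)$ in $L_1$. For the first term we write $M_n=\sum\big((T_i-\rho)\widetilde Z_i-\Ex[(\ell_i-\rho)\widetilde Z_i\mid\mathcal F_{i-1}]\big)$, a martingale, plus a compensator $\sum_i\Ex[(\ell_i-\rho)\widetilde Z_i\mid\bm\Lambda_{i-1}]$. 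On $\{|x|\le\delta\}$ we have $\ell_i-\rho=-\lambda\langle\bm\Lambda_{i-1},\bm\phi_i\rangle/(i-1)^\gamma$. Since $\Ex[\bm\phi\widetilde Z]=\bm 0$ by the normal equations for $\bm x_0$, the compensator equals only the contribution of the event $\{|x_i|>\delta\}$. That contribution is bounded by
\[
\Ex\big[(|\widetilde Z_i|+1)\,\mathbf 1\{\|\bm\Lambda_{i-1}\|\|\bm\phi_i\|>\delta i^\gamma\}\big].
\]
Using \eqref{eq:thm:Property2:1} and Markov's inequality with the $r_0/2$ and $r_0$ moments, each term is $O(i^{-\gamma r_0/4}\cdot\ldots)$, and the sum is $o(\sqrt n)$ exactly when $(r_0+2)\gamma>4$. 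Checking this exponent bookkeeping (balancing the moment of $\bm\Lambda$ against that of $\bm\phi$ via Hölder together with $\Ex Z^2<\infty$) is the main obstacle, and I would first try Hölder with exponents tuned so that the total tail is $n^{1-\gamma(r_0+2)/4}\cdot n^{1/2}$-type.

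Finally, we apply the martingale CLT to $(M_n,\sum W_i)/\sqrt n$ via Cramér–Wold. The conditional variance of $M_n$ is $\Ex[\ell_i(1-\ell_i)\widetilde Z_i^2\mid\mathcal F_{i-1}]-(\cdot)^2$. By \eqref{eq:thm:Property1:7}-type arguments ($\ell_i\to\rho$ in $L_1$ since $\bm\Lambda_{i-1}/i^\gamma\to0$), it converges to $\rho(1-\rho)\Ex\widetilde Z^2$. The cross term $\Ex[(T_i-\rho)\widetilde Z_iW_i\mid\mathcal F_{i-1}]=\Ex[(\ell_i-\rho)\widetilde Z_iW_i\mid\cdot]\to0$ in $L_1$, which gives asymptotic independence. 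The Lindeberg condition follows from uniform integrability of $\widetilde Z^2$ and $W^2$. For \eqref{eq:thm:Property1:11}, the $L_2$ convergence follows because $\Ex M_n^2/n$ converges by the same variance computation, while the remainder terms are $o(\sqrt n)$ in $L_2$. This last point uses the $L_2$ version of the tail estimate, which again holds under $(r_0+2)\gamma>4$.
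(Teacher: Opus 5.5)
Your architecture matches the paper's. The moment bound comes from the drift machinery behind \eqref{eq:thm:Property1:1}, and you can simply cite it with $r=r_0/2\in[2,r_0]$. The only check is $(r_0+2-r)\gamma\ge 2$, i.e.\ $(r_0+4)\gamma\ge 4$, which the hypothesis implies. No Taylor remainder is involved in that part, and a bare induction is not immediate, because the drift saturates at $\|\bm\Lambda_n\|^{r-1}$ on $\{\|\bm\Lambda_n\|>n^\gamma\}$. The CLT comes from splitting off $\langle\bm\Lambda_n,\bm x_0\rangle$ and the compensator, whose main part vanishes by exact linearity of $\ell$ near $0$ together with $\Ex[\bm\phi\widetilde Z]=\bm 0$.

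However, your bound on what remains of the compensator is false. Once the linear term is removed globally, the compensator equals $\Ex[(\ell_i-\rho+\lambda x_i)\widetilde Z_i I\{|x_i|>\delta\}\mid\bm\Lambda_{i-1}]$. On $\{|x_i|>\delta\}$ the factor $|\ell_i-\rho+\lambda x_i|$ can be as large as $1+\lambda|x_i|$, which is unbounded, so $\Ex[(|\widetilde Z_i|+1)I\{\cdots\}]$ does not dominate it. Use instead the global estimate $|\ell(x)-\rho+\lambda x|\le c_K|x|^K$. It holds for every $K\ge 1$ because the left side vanishes for $|x|\le\delta$, and it absorbs the indicator and the linear tail at once. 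Conditioning on $\bm\Lambda_{i-1}$ and applying Cauchy--Schwarz over the fresh $(\bm\phi_i,\widetilde Z_i)$ then gives
\[
\big|\Ex[(\ell_i-\rho)\widetilde Z_i\mid\bm\Lambda_{i-1}]\big|\le c_K(\Ex Z^2)^{1/2}(\Ex\|\bm\phi\|^{2K})^{1/2}\,\frac{\|\bm\Lambda_{i-1}\|^K}{(i-1)^{K\gamma}},\qquad K=r_0/2 .
\]

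The exponent bookkeeping you left open is exactly where the hypothesis enters, and you have misplaced it. For the distributional limit \eqref{eq:thm:Property1:10} alone, $L_1$ suffices: by \eqref{eq:thm:Property2:1} each term is $O(i^{-r_0\gamma/4})$, so the sum is $o(\sqrt n)$ as soon as $r_0\gamma>2$, not ``exactly when $(r_0+2)\gamma>4$''.

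For \eqref{eq:thm:Property1:11} you need the remainder to be $o(\sqrt n)$ in $L_2$. That requires control of $\Ex\|\bm\Lambda_i\|^{2K}=\Ex\|\bm\Lambda_i\|^{r_0}$, which \eqref{eq:thm:Property2:1} does not provide. With only the $r_0/2$-moment you would be forced to take $K=r_0/4$, giving $O(i^{-r_0\gamma/8})$ per term and needing $r_0\gamma>4$. The hypothesis does not imply this (e.g.\ $r_0=4$, $\gamma=0.7$).

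The missing ingredient is the crude bound in \eqref{eq:thm:Property1:3}, $\Ex\|\bm\Lambda_n\|^{r_0}=O(n^{\gamma r_0/2+1-\gamma})$, obtained by summing the drift inequality. It makes each term $O(i^{1/2-(r_0+2)\gamma/4})=o(i^{-1/2})$ in $L_2$ precisely when $(r_0+2)\gamma>4$. Together with $\|\bm\Lambda_n\|=o(\sqrt n)$ in $L_2$, this yields $\sum_i(T_i-\rho)Z_i=M_{n,\widetilde Z}+o(\sqrt n)$ in $L_2$, and hence both conclusions.

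A minor point: $\Ex[(T_i-\rho)^2\mid\cdot]=\ell_i(1-\ell_i)+(\ell_i-\rho)^2$, not $\ell_i(1-\ell_i)$. The limit of the conditional variance is unchanged.
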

 
\begin{corollary}\label{thm:Property3}
Suppose  Assumption  \ref{asm:iid},  and that \ref{asm:moment} is satisfied for all $r_0>2$.
\begin{description}
  \item[\rm (i)]  We have 
  \begin{equation}\label{eq:thm:Property3:1}
 \big(\Ex\|\bm{\Lambda}_n\|^r\big)^{1/r}=O(n^{\gamma /2}) \; \text{  for all } r>0.
 \end{equation}
 \item[\rm (ii)] Suppose
   $\Ex Z^2<\infty$, $\Ex W^2<\infty$, $\Ex W=0$, and let $\ell(x)$ be defined as in \eqref{eq:normallocation3}.
Then  
\begin{equation}\label{eq:thm:Property3:2}
     \sum_{i=1}^n (T_i-\rho)Z_i=\sum_{i=1}^n\big((T_i-1)\widetilde{Z}_i-\Ex[(\ell_i-\rho)\widetilde{Z}_i|\bm \Lambda_{i-1}]\big)+O(n^{\gamma/2}) \text{ in } L_r
\end{equation} 
for all $r>0$. Moreover,  for all the case of  $0<\gamma<1$,  \eqref{eq:thm:Property1:10} and \eqref{eq:thm:Property1:11}  hold.
\end{description}
\end{corollary}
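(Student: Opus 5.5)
The corollary follows from Theorem \ref{thm:Property1}(i) and Theorem \ref{thm:Property2} by letting $r_0\to\infty$, together with the decomposition argument behind \eqref{eq:thm:Property1:9}, which we redo with the better moment bounds.

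\textbf{Part (i).} Fix $r>0$. By Lyapunov's inequality we may assume $r\ge 2$. Choose $r_0$ large, which is allowed since all moments of $\|\bm\phi(\bm X)\|$ are finite. In \eqref{eq:thm:Property1:1} the second exponent is
\[
\frac{\gamma}{2}-\frac{(r_0+2-r)\gamma-2}{2r(r_0+1-r)}.
\]
As $r_0\to\infty$ with $r$ fixed, the subtracted fraction tends to $\gamma/(2r)>0$, so it is positive for $r_0$ large. The exponent is then below $\gamma/2$, the first term $n^{\gamma/2}$ dominates, and $(\Ex\|\bm\Lambda_n\|^r)^{1/r}=O(n^{\gamma/2})$. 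This step does not depend on the choice of $\ell$, since Theorem \ref{thm:Property1}(i) holds for any admissible allocation function, including \eqref{eq:normallocation3}.

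\textbf{Part (ii), asymptotic normality.} Given $0<\gamma<1$, pick $r_0\ge4$ with $(r_0+2)\gamma>4$ and $(r_0-2)\gamma\ge2$. Theorem \ref{thm:Property2} then gives \eqref{eq:thm:Property1:10} and \eqref{eq:thm:Property1:11} directly.

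\textbf{Part (ii), the $L_r$ decomposition \eqref{eq:thm:Property3:2}.} This is the main work; I read the printed $(T_i-1)$ as $(T_i-\rho)$. Write $Z_i=\widetilde Z_i+\langle\bm\phi(\bm X_i),\bm x_0\rangle$. Then
\[
\sum_{i=1}^n(T_i-\rho)Z_i=\sum_{i=1}^n(T_i-\rho)\widetilde Z_i+\langle\bm\Lambda_n,\bm x_0\rangle,
\]
and the last term is $O(n^{\gamma/2})$ in $L_r$ by part (i). It remains to show that
\[
\sum_{i=1}^n\Ex\big[(\ell_i-\rho)\widetilde Z_i\,\big|\,\bm\Lambda_{i-1}\big]
\]
is the compensator, up to a remainder of order $O(n^{\gamma/2})$ in $L_r$. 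Here $\ell_i$ depends on $(\bm\Lambda_{i-1},\bm X_i)$, and $(\bm X_i,Z_i)$ is independent of $\mathcal F_{i-1}$. Hence
\[
\Ex\big[(T_i-\rho)\widetilde Z_i\,\big|\,\mathcal F_{i-1}\big]=\Ex\big[(\ell_i-\rho)\widetilde Z_i\,\big|\,\bm\Lambda_{i-1}\big]
\]
exactly. So the centred sum is a martingale plus nothing else, with no approximation needed.

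I expect the main obstacle is that this compensator is not itself small: it has size $\sum_i n^{-\gamma}\|\bm\Lambda_{i-1}\|\approx n^{1-\gamma/2}$ at first order. Two facts need to be combined to absorb it. First, for $\ell_{propose}$, $\ell_i-\rho=-\lambda\langle\bm\Lambda_{i-1},\bm\phi(\bm X_i)\rangle/(i-1)^\gamma$ on the event that the argument is small. Second, $\Ex[\bm\phi(\bm X)\widetilde Z]=\bm 0$ by the orthogonality defining $\bm x_0$, so the linear part of the compensator vanishes exactly. The complement event, where the truncation by $\alpha,\beta$ is active, has probability bounded by
\[
\Prob\big(|\langle\bm\Lambda_{i-1},\bm\phi(\bm X_i)\rangle|>c\,i^\gamma\big)\le C_s\,i^{-s\gamma/2}
\]
for every $s$, using part (i) with all moments finite and Markov's inequality. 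By H\"older's inequality these terms sum to $O(1)$ in $L_r$. Since the statement keeps the compensator in the expression, what must be shown is only that the martingale differences have bounded $r$-th moments. Burkholder's inequality then applies to the leading martingale, and all remainders are $O(n^{\gamma/2})$ in $L_r$, which gives \eqref{eq:thm:Property3:2}.
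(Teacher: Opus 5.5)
This is essentially the paper's argument. Part (i) and the limit theorems come from Theorem~\ref{thm:Property1}(i) and Theorem~\ref{thm:Property2} with $r_0$ taken large. \eqref{eq:thm:Property3:2} comes from the exact identity
\[
\sum_{i=1}^n (T_i-\rho)Z_i-M_{n,\widetilde Z}=\langle\bm\Lambda_n,\bm x_0\rangle+\sum_{i=1}^n\Ex\big[(\ell_i-\rho)\widetilde Z_i\,\big|\,\bm\Lambda_{i-1}\big],
\]
with $M_{n,\widetilde Z}$ the martingale on the right of \eqref{eq:thm:Property3:2}. In this identity:
\begin{itemize}
\item the first term is $O(n^{\gamma/2})$ in $L_r$ by part (i);
\item the linear part of each compensator term vanishes because $\Ex[\bm\phi\widetilde Z]=\bm 0$;
\item the truncation remainder is summable in $L_r$.
\end{itemize}
The paper writes the remainder bound as $|\ell(x)-\rho+\lambda x|\le c_K|x|^K$ with $K\gamma/2>1$, which is the same estimate as your indicator-plus-Markov bound. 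In either form, apply Cauchy--Schwarz conditionally on $\bm\Lambda_{i-1}$ before taking $L_r$ norms, so that $\widetilde Z$ enters only through $(\Ex Z^2)^{1/2}$; an unconditional H\"older step would need higher moments of $Z$.

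Drop your last paragraph. \eqref{eq:thm:Property3:2} asserts only that the difference above is $O(n^{\gamma/2})$ in $L_r$, and you have already shown that, so nothing about the size of $M_{n,\widetilde Z}$ is needed. The Burkholder step would in fact fail for $r>2$: with only $\Ex Z^2<\infty$, the martingale differences need not have finite $r$-th moments.
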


From the proofs given in Section \ref{sec:proof}, the conclusions in Theorems \ref{thm:Property1},\ref{thm:Property2} and corollary \ref{thm:Property3} remain true when the factor  $(n-1)^{\gamma}$ in the definition \eqref{eq:allocation-2} of the allocation probability is replaced by a general factor $\lambda_{n-1}$ with 
$0<c_0\le \lambda_n/n^{\gamma}\le C_0<\infty$. 

\begin{example}\label{example:1}  (Discrete covariates).  Suppose $\bm X=(X_1,\ldots,X_p)$  consists of $p$ discrete covariates, with the
$l$-th covariate $X_l$ having $m_l$ levels, $x_l^j, j=1,\cdots, m_l$, $l = 1, \cdots , p$. 
We let  $\bm\phi=\bm\phi(\bm X)$ be the following feature map in our framework,
 $$ \bm\phi=\left(\sqrt{w_o},\underset{\sum m_l \text{marginal terms}}{\underbrace{\ldots,\sqrt{w_{m,l}}I\{X_l=x_l^{k_l^{\ast}}\},\ldots}},
 \underset{\prod m_l \text{within-stratum terms}}{\underbrace{\ldots,\sqrt{w_{s}}I\{X_1=x_1^{k_1^{\ast}},\ldots,X_p=x_p^{k_p^{\ast}}\}\ldots}}\right), $$
where $w_o$, $w_{m,l}$, $w_s$ are the corresponding nonnegative weights and at least one of them is no zero.

\end{example}

Let $D_n=\sum_{i=1}^n(T_i-\rho)$, $D_n(l,k_l^{\ast})=\sum_{i=1}^n(T_i-\rho)I\{X_{i,l}=x_l^{k_l^{\ast}}\}$, $D_n(k_1^{\ast},\ldots, k_p^{\ast})=\sum_{i=1}^n(T_i-\rho)I\{X_{i,1}=x_1^{k_1^{\ast}},\ldots,X_{i,p}=x_p^{k_p^{\ast}}\}$. Then
$$\|\bm \Lambda_n\|^2 
=w_0D_n^2+\sum_{l=1}^{p}w_{m,l}\sum_{k_l=1}^{m_l}D_n^2(l;k_l)+w_s\sum_{k_1,\ldots,k_p}D_n^2(k_1,\ldots,k_p). $$

When $\bm X_n=(x_1^{k_1^{\ast}},\ldots,x_p^{k_p^{\ast}})$,
\begin{align*} \Lambda_{n-1}(k_1^{\ast},\ldots,k_p^{\ast})=:&\langle\bm\Lambda_{n-1},\bm\phi(\bm X_n)\rangle\\
=&w_0D_{n-1}+\sum_{l=1}^pw_{m,l}D_{n-1}(l;k_l^{\ast})+w_sD_{n-1}(k_1^{\ast},\ldots, k_p^{\ast}).
\end{align*}
Then, the allocation probability of the $n$th-unit is 
\begin{align}\label{eq:example1:1}
   \Prob\Big(T_n=1 & \Big|\bm X_1,\ldots, \bm X_{n-1}, T_1,\ldots,T_{n-1},\bm X_n=(x_1^{k_1^{\ast}},\ldots,x_l^{k_p^{\ast}})\Big)
   \nonumber\\
& =\ell\left(\frac{\Lambda_{n-1}(k_1^{\ast},\ldots,k_p^{\ast})}{(n-1)^{\gamma}}\right). 
\end{align}
When $w_s\ne 0$, the procedure is related to the stratified randomization. When $w_s=0$, $w_ {m,l}\ne 0$, $l=1,\ldots,p$, the procedure is related to the \cite{Pocock1975}'s marginal procedure. When $w_s=w_ {m,l}= 0$, $l=1,\ldots, p$, the procedure is related to \cite{Efron1971}'s biased coin design. 

By Corollary \ref{thm:Property3}, 
\begin{equation}\label{eq:example1:2} \|\bm \Lambda_n\|^2=O(n^\gamma) \text{ in }  L_r  \text{ for all } r>0. 
\end{equation}
 Thus, when $w_s\ne 0$,
$D_n(k_1,\ldots,k_p)=O_P(n^{\gamma/2})$ and so, for any $m(\bm X)$, $\sum_{i=1}^n (T_i-\rho)m(\bm X_i)=O_P(n^{\gamma/2})$.  
When $w_s=0$, we will have
\begin{align*}
D_n(l,k_l)=   O_P(n^{\gamma/2}) \; \text{ and } &\sum_{i=1}^n(T_i-\rho)f(X_{i,l})= O_P(n^{\gamma/2}), \text{ if } w_{m,l}\ne 0,\\
D_n=& O_P(n^{\gamma/2}).
\end{align*}
In any case, we have
\begin{equation}\label{eq:example1:3}\frac{\Ex(\sum_{i=1}^n (T_i-\rho)m(\bm X_i))^2}{n}\to \vec{\sigma}_m^2 \text{ and } \frac{\sum_{i=1}^n (T_i-\rho)m(\bm X_i)}{\sqrt{n}}
\overset{d}\to N(0,\vec{\sigma}_m^2) 
\end{equation}
with $\vec{\sigma}_m^2=\vec{\sigma}_{m-\Ex m}^2\le \rho(1-\rho)\Var(m(\bm X))$
since $1\in Span(\bm \phi(\bm X))$. In particular,
\begin{equation}\label{eq:example1:4} \frac{D_n(k_1,\ldots,k_p)}{\sqrt{n}}\overset{d}\to N(0,\vec{\sigma}^2(k_1,\ldots,k_p))  
\end{equation}
with $ \vec{\sigma}^2(k_1,\ldots,k_p)\le \rho(1-\rho) p(k_1,\ldots,k_p)(1-p(k_1,\ldots,k_p))$, where $p(k_1,\ldots,k_p)=\Prob(X_1=x_1^{k_1},\ldots,X_p=x_p^{k_p})$. 

When $\rho=1/2$, the allocation probability is similar to that of \cite{Zhang2021}, where \eqref{eq:example1:2} is obtained, but 
the properties \eqref{eq:example1:3} and \eqref{eq:example1:4} are not found. 

When $\rho=1/2$ and $\gamma=0$, our CAR procedure is similar to that of \cite{Hu2012}. The theory of this kind of CAR procedures can be found in \cite{HuZhang2020} and \cite{Zhang2023}.  It is shown that \eqref{eq:example1:3} still holds, but $\vec{\sigma}_m^2$ has no closed from and may exceed $\Ex[m^2(\bm X)]/4$ unless $\Ex[m(\bm X)|\bm\phi(\bm X)]\in Span\{\bm \phi(\bm X)\}$. In particular, when $w_s=w_0=0$, the procedure is \cite{Pocock1975}'s marginal procedure.  \cite{Zhao2022}  demonstrated via examples that along some directions, there is an increased variance of within-stratum imbalances under \cite{Pocock1975}'s  procedure compared to the simple randomization.

\begin{example}\label{example:2}   When a linear model is adopted to fit the responses of treatments over a covariate feature  $\bm X=(X_1,\ldots X_p)$. Adopting the feature $\bm\phi=(1,X_1,\ldots,X_p)$ in our framework will yields
$$ \sum_{i=1}^n (T_i-\rho)=O(n^{\gamma}) \text{ and } \sum_{i=1}^n (T_i-\rho)X_{i,t}=O(n^{\gamma}) \text{ in } L_r,$$
when $X_1,\ldots, X_p$ have finite moments of each order. At the same time, for any nonlinear function $m=m(X_1,\ldots,X_p)$ with finite variance, then
the asymptotic variance of $\sum_{i=1}^n (T_i-\rho)m(\bm X_i)/\sqrt{n}$ is $\rho(1-\rho)\Var(m-\Proj[m|\bm X])$ which is controlled by $\rho(1-\rho)\Var(m)$. The procedure will not cause the "shift problem" and the variance inflation. 
\end{example}
  
The results in Theorems \ref{thm:Property1}, \ref{thm:Property2} and Corollary \ref{thm:Property3} does not include the case of $\gamma=0$. Though  $\gamma=0$ is not our proposed choice, we give the results about this case as follows.
\begin{theorem}\label{thm:Property4} Let $\gamma=0$. 
Suppose that Assumptions  \ref{asm:iid} and \ref{asm:moment} are satisfied. We have the following conclusions.
\begin{description}
  \item[\rm (i)]  \eqref{eq:thm:Property1:1} and \eqref{eq:thm:Property1:2} hold with $\gamma=0$, i.e. $\left(\Ex\left[\|\bm \Lambda_n\|^r\right]\right)^{1/r} \le  C_{r,r_0} n^{ \frac{1}{r(r_0+1-r)}}$, $ r\in [1,  r_0]$. In particular,  $\Ex[\|\bm{\Lambda}_n\|^2]=O\big(n^{\frac{1}{r_0-1}}\big)$, and $=O\big(n^{\frac{8}{(r_0+1)^2}}\big)$ when $r_0\ge 3$.
  \item[\rm (ii)] If $\Ex|Z|<\infty$ and $\Ex[Z|\bm \phi(\bm X)]\in Span\{\bm \phi(\bm X)\}$, then \eqref{eq:thm:Property1:8} holds and
      \begin{equation}\label{eq:thm:Property4:2}
      \Ex\left(\sum_{i=1}^n(T_i-\rho) Z_i\right)^2\le 2n\Ex Z^2 +C_0  \Ex Z^2\Ex\|\bm \Lambda_n\|^2 \text{ if } \Ex Z^2<\infty.
      \end{equation}
  \item[\rm (iii)] If $\Ex Z^2<\infty$,    $\Ex[Z|\bm \phi(\bm X)]\in Span\{\bm \phi(\bm X)\}$, and $\big(Z-\Ex[Z|\bm \phi(\bm X)]\big)^2\in Span\{\bm \phi(\bm X)\}$ when $\rho\ne 1/2$, then 
      \begin{equation} \label{eq:thm:Prperty4:3}  \frac{ \sum_{i=1}^n(T_i-\rho)Z_i }{\sqrt{n}}\overset{d}\to N(0,\vec{\sigma}_Z^2),
  \end{equation} 
   where $\vec{\sigma}_Z^2=(1-\rho)\rho\Ex[(Z-\Proj[Z|\bm \phi(\bm X)])^2]$.  Further, if $\Ex W^2<\infty$, $\Ex[W]=0$ and also $
   \big(Z-\Ex[Z|\bm \phi(\bm X)]\big)W$ is  in $Span\{\bm \phi(\bm X)\}$, then we have \eqref{eq:thm:Property1:10}.
\end{description}
\end{theorem}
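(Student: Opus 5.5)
For $\gamma=0$ the allocation probability is $\ell_n=\ell(\langle\bm\Lambda_{n-1},\bm\phi(\bm X_n)\rangle)$. I will reuse the drift argument behind part (i) of Theorem \ref{thm:Property1}, now with normalizing factor $1$. The key decomposition is
\[
\|\bm\Lambda_n\|^2-\|\bm\Lambda_{n-1}\|^2=2(\ell_n-\rho)\langle\bm\Lambda_{n-1},\bm\phi_n\rangle+2(T_n-\ell_n)\langle\bm\Lambda_{n-1},\bm\phi_n\rangle+(T_n-\rho)^2\|\bm\phi_n\|^2 .
\]
The middle term is a martingale difference. The first term is $\le 0$ because $\ell$ is non-increasing with $\ell(0)=\rho$, and it is $\le -c\min(|x|,x^2)$ with $x=\langle\bm\Lambda_{n-1},\bm\phi_n\rangle$, because $\ell'(0)<0$.

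\textbf{Part (i).} I apply the same computation to $\|\bm\Lambda_n\|^r$ via a Taylor expansion. The negative drift only bites when $\|\bm\Lambda_{n-1}\|$ is large. It is at least $c\,\|\bm\Lambda_{n-1}\|^{r-1}\Ex|\langle \bm u,\bm\phi\rangle|$ for unit $\bm u$, using nondegeneracy on $Span\{\bm\phi\}$ and projecting onto the span if needed. The positive part is of order $\Ex\|\bm\phi\|^r\,\|\bm\Lambda_{n-1}\|^{r-2}$. The moment restriction $r\le r_0$ enters through truncating $\|\bm\phi_n\|$ at a level $M$. The tail beyond $M$ contributes $\Ex\|\bm\phi\|^{r_0}M^{r-r_0}$ per step, and optimizing $M$ against $n$ yields the exponent $\frac{1}{r(r_0+1-r)}$. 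Since the general-$\gamma$ bounds \eqref{eq:thm:Property1:1}–\eqref{eq:thm:Property1:2} were already derived, I will simply rerun that proof with $\gamma=0$: no step there used $\gamma>0$ except through the scaling factor, which is now constant. The special cases follow by substitution: $r=2$ gives $n^{1/(r_0-1)}$, and for $r_0\ge3$ I use Hölder interpolation from the bound at $r=(r_0+1)/2$, which minimizes $r(r_0+1-r)$.

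\textbf{Part (ii).} Write $Z=\langle\bm\phi(\bm X),\bm x_0\rangle+\widetilde Z$ with $\Ex[\widetilde Z\mid\bm\phi(\bm X)]=0$, which holds by the span assumption. The linear part gives $\langle\bm\Lambda_n,\bm x_0\rangle$. The remaining sum $\sum(T_i-\rho)\widetilde Z_i$ is a martingale. Indeed, given $\mathcal F_{i-1}$ and $\bm X_i$, the factor $T_i-\rho$ depends only on $\bm\Lambda_{i-1}$, $\bm\phi(\bm X_i)$ and independent randomization, so $\Ex[(T_i-\rho)\widetilde Z_i\mid\mathcal F_{i-1}]=\Ex[(\ell_i-\rho)\Ex[\widetilde Z_i\mid\bm\phi_i]]=0$. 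Its second moment is at most $n\Ex\widetilde Z^2\le n\Ex Z^2$. Using $(a+b)^2\le2a^2+2b^2$ and $|\bm x_0|^2\le C\Ex Z^2$, this gives \eqref{eq:thm:Property4:2}. For \eqref{eq:thm:Property1:8} with only $\Ex|Z|<\infty$, I truncate $\widetilde Z$, re-center conditionally on $\bm\phi$ (the truncation error is $o(n)$ in $L_1$), and apply the $L_2$ martingale bound to the truncated part. Part (i) gives $\bm\Lambda_n=o(n)$ in $L_1$.

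\textbf{Part (iii).} I apply the martingale CLT to $\sum(T_i-\rho)\widetilde Z_i/\sqrt n$; the term $\langle\bm\Lambda_n,\bm x_0\rangle/\sqrt n\to0$ by (i), since $1/(r_0-1)<1$. The conditional variance is $\Ex[(T_i-\rho)^2\widetilde Z_i^2\mid\mathcal F_{i-1}]=\Ex[(\rho(1-\rho)+(1-2\rho)(\ell_i-\rho))\widetilde Z_i^2\mid\mathcal F_{i-1}]$. When $\rho=1/2$ the second term vanishes. Otherwise I use $\widetilde Z^2-\Ex[\widetilde Z^2\mid\bm\phi]$ together with $\Ex[\widetilde Z^2\mid\bm\phi]=\langle\bm\phi,\bm y\rangle$, by assumption. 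This reduces $\sum(\ell_i-\rho)\widetilde Z_i^2$ to $\sum(\ell_i-\rho)\langle\bm\phi_i,\bm y\rangle$. That sum equals $\langle\bm\Lambda_n,\bm y\rangle$ minus a martingale $\sum(T_i-\ell_i)\langle\bm\phi_i,\bm y\rangle$, plus a term that is $o(n)$ via a martingale LLN. Hence the conditional variance sum divided by $n$ tends to $\rho(1-\rho)\Ex\widetilde Z^2$. The Lindeberg condition follows from $\Ex\widetilde Z^2<\infty$ and identical distribution. For joint convergence with $\sum W_i$, I use the Cramér–Wold device. The cross conditional covariance $\Ex[(T_i-\rho)\widetilde Z_iW_i\mid\mathcal F_{i-1}]=\Ex[(\ell_i-\rho)\widetilde Z_iW_i\mid\mathcal F_{i-1}]$ is handled in the same way using $\widetilde ZW\in Span\{\bm\phi\}$ up to its conditional mean. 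It reduces to $\langle\bm\Lambda_{i-1}\text{-type}\rangle$ sums that are $o(n)$, so the covariance matrix is diagonal.

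The main obstacle is part (i)'s moment bound with only $r_0$ moments, namely the truncation/drift balancing. Since it is formally the $\gamma=0$ case of Theorem \ref{thm:Property1}(i), I will verify that the earlier proof tolerates $\gamma=0$ rather than redo it.
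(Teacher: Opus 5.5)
Your proposal is correct and follows essentially the paper's proof. Part (i) reruns the drift/H\"older argument of Theorem~\ref{thm:Property1}(i) with a constant normalization; (ii) splits the sum as $\sum(T_i-\rho)\widetilde Z_i+\langle\bm\Lambda_n,\bm z_0\rangle$ and bounds the martingale part; (iii) is the martingale CLT, with the $(1-2\rho)(T_i-\rho)\widetilde Z_i^2$ term killed via the span assumption and part (ii)/(i). The only deviations are cosmetic: you use truncation instead of the Hall--Heyde weak law for uniformly integrable martingale differences, and the conditional-variance form of the CLT instead of the sum-of-squares form. Note one slip: $r=(r_0+1)/2$ maximizes, not minimizes, $r(r_0+1-r)$.
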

When $\gamma=0$, the CAR procedure  is similar to \cite{Liu2025}'s procedure.  It does not cause the ``shift problem" when $\Ex[Z|\bm \phi(\bm X)]\in Span\{\bm \phi(\bm X)\}$, and the variance is not inflated when $\rho= 1/2$.   When $\bm X$ is discrete as in Example \ref{example:1} and $\bm\phi(\bm X)$ given there with $w_s\ne 0$, i.e., the stratified randomization is adopted,   then $\Ex[Z|\bm \phi(\bm X)]$, $\big(Z-\Ex[Z|\bm \phi(\bm X)]\big)^2$, $\big(Z-\Ex[Z|\bm \phi(\bm X)]\big)W$ are all in $Span\{\bm \phi(\bm X)\}$, and so the asymptotic normality   \eqref{eq:thm:Property1:10}  holds with a variance $\vec{\sigma}_Z^2$ being not inflated. This conclusion is consistent with Theorem 3 of \cite{Liu2025}. 

\section{An application}
\label{sec:application}
As an application,  we    study hypothesis testing of treatment effects under the proposed 
covariate-adaptive procedure.
   For each unit $i$, denote $Y_i(1)$ and $Y_i(2)$ as the potential outcomes under the treatments $1$ and $2$, respectively, with $\Ex[Y_i(t)]=\mu_t$ and $\Var(Y_i(t))=\sigma_t^2$, $t=1,2$.  The observed outcome is
$
Y_i = T_i Y_i(1) + ( 1 - T_i ) Y_i(2).
$
The treatment effect is defined as $\tau =\Ex \{ Y_i(1)\}-\Ex \{ Y_i(2)\}$.
Also, denote $\bm X_i=(X_{i,1},\ldots, X_{i,p})$   as the covariate of unit $i$ as before.  Assume that $(  Y_i(1), Y_i(2), \bm X_i)$, $i=1,\ldots, n$, are  independent and identically distributed as $(Y(1), Y(2),\bm X)$.  We suppose that the units are randomized sequentially by  the proposed CAR procedure with feature $\bm\phi(\bm X)$.
We consider the test of
\begin{equation}\label{htest} H_0:\tau=:\Ex \{ Y_i(1)\}-\Ex \{ Y_i(2)\} = 0\;\;  \text{versus} \;\; H_A : \tau\ne  0.
\end{equation}
Let $\overline{Y}_{n,1}=\frac{1}{N_{n,1}}\sum_{i=1}^n T_iY_i$, $\overline{Y}_{n,2}=\frac{1}{N_{n,2}}\sum_{i=1}^n(1-T_i)Y_i$,
$\widehat{\sigma}_{n,1}^2=\frac{1}{N_{n,1}-1}\sum_{i=1}^n T_i(Y_i-\overline{Y}_{n,1})^2$ and
$\widehat{\sigma}_{n,2}^2=\frac{1}{N_{n,2}-1}\sum_{i=1}^n(1-T_i)(Y_i-\overline{Y}_{n,2})^2$,  where $N_{n,1}=\sum_{i=1}^n T_i$, $N_{n,2}=n-N_{n,1}$, and $\frac{0}{0}$ is defined to be $0$. Then  $\overline{Y}_{n,1}$, $\overline{Y}_{n,2}$, $\widehat{\sigma}_{n,1}^2$ and $\widehat{\sigma}_{n,2}^2$ are  estimators of $\mu_1$, $\mu_2$, $\sigma_1^2$ and $\sigma_2^2$, respectively. We consider the classical test statistic 
\begin{equation}\label{TS}
  \mathcal{T}(n)=\frac{\overline{Y}_{n,1}-\overline{Y}_{n,2}}{ \sqrt{\widehat{\sigma}_{n,1}^2/N_{n,1}+\widehat{\sigma}_{n,2}^2/N_{n,2}}}.  
\end{equation}

\begin{theorem} \label{thm:test} Suppose that the proposed CAR procedure with a feature $\bm\phi(\bm X)$ and an allocation function $\ell(x)$ as in \eqref{eq:normallocation3} is applied to balance covariates between treatments $1$ and $2$  with a ratio  $\rho:(1-\rho)$, and  Assumptions \ref{asm:iid}, \ref{asm:moment} for all $r_0>2$  are satisfied.
Then \begin{equation}\label{eq:const} \overline{Y}_{n,t}\overset{P}\to \mu_t, \;\; \widehat{\sigma}_{n,t}^2\overset{P}\to \sigma_t^2, \;\;t=1,2, \end{equation}
\begin{equation}\label{eq:esttau}
 \sqrt{n}\big(\overline{Y}_{n,1}-\overline{Y}_{n,2}-\tau\big)\overset{D}\to N(0,\sigma_{\tau}^2),
  \end{equation}
where 
\begin{equation}\label{eq:asyvaroftest} \sigma_{\tau}^2=\frac{\sigma_1^2}{\rho}+\frac{\sigma_2^2}{1-\rho}-\rho(1-\rho)\sigma_{Pj}^2
=\frac{\breve{\sigma}_1^2}{\rho}+\frac{\breve{\sigma}_2^2}{1-\rho}+\breve{\sigma}_3^2,
\end{equation} 
\begin{equation}
\begin{aligned} \label{eq:asyvariancesdef}
 \sigma_{Pj}^2=&\Ex\Big(\Proj\big[ \frac{Y(1)-\mu_1}{\rho} + \frac{Y(2)-\mu_2}{1-\rho}\big|\bm \phi(\bm X)\big]\Big)^2,\\
 \breve{\sigma}_t^2=& \Ex\Big(Y(t)-\mu_t-\Proj\big[ Y(t)-\mu_t\big|\bm \phi(\bm X)\big]\Big)^2,\;\;t=1,2,\\
 \breve{\sigma}_3^2= &\Ex\Big(\Proj\big[  Y(1)-\mu_1-(Y(2)-\mu_2)\big|\bm \phi(\bm X)\big]\Big)^2. 
\end{aligned}
\end{equation}
Moreover, 
 \begin{itemize}
\item[(i)]  Under $H_0:\tau=0$,
\begin{equation}\label{eq:test1}
\mathcal{T}(n)\overset{D}{\to}N(0,\sigma_{\mathcal T}^2),\; \text{with}, \;\sigma_{\mathcal T}^2=\frac{\sigma_{\tau}^2}{\sigma_e^2}\le 1,
\end{equation}
 where
 $\sigma_e^2= \frac{\sigma_1^2}{\rho}+\frac{\sigma_2^2}{1-\rho}$.
 \item[(ii)] under $H_A:\tau\neq 0$, consider a sequence of local alternatives, i.e., $\tau=\delta/\sqrt{n}$ for a fixed $\delta \neq 0$ (we suppose the distributions of $Y(t)-\mu_t$, $t=1,2$, do not depend on $n$),  then
\begin{equation}\label{eq:test2}
\mathcal{T}(n)\overset{D}{\to}N(\Delta,\sigma_{\mathcal T}^2),\; \text{with}\; \Delta=\frac{\delta}{\sigma_e}.
\end{equation}
\end{itemize}
\end{theorem}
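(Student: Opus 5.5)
The plan is to reduce everything to Corollary \ref{thm:Property3}. Its assumptions hold: Assumption \ref{asm:moment} is imposed for all $r_0>2$, and $\ell$ is given by \eqref{eq:normallocation3}. So for any square-integrable $Z,W$ with $\Ex W=0$ we have the joint limit \eqref{eq:thm:Property1:10}, and \eqref{eq:thm:Property1:8} gives the law of large numbers for $\sum_{i=1}^n(T_i-\rho)Z_i/n$.

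\textbf{Consistency.} First, take $Z=1$ in \eqref{eq:thm:Property1:8}. This gives $N_{n,1}/n\to\rho$ and $N_{n,2}/n\to 1-\rho$ in probability. Next, write $\sum_i T_iY_i=\sum_i T_iY_i(1)=\rho\sum_iY_i(1)+\sum_i(T_i-\rho)Y_i(1)$. The first term obeys the ordinary LLN, and the second is $o_P(n)$ by \eqref{eq:thm:Property1:8} with $Z=Y(1)$. Hence $\overline Y_{n,1}\to\mu_1$. The same argument with $Z=Y(1)^2$ (only $\Ex|Z|<\infty$ is needed) gives $\widehat\sigma_{n,1}^2\to\sigma_1^2$. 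Treatment 2 is handled symmetrically, using $1-T_i-(1-\rho)=-(T_i-\rho)$. This proves \eqref{eq:const}.

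\textbf{CLT.} Put $\epsilon_i(t)=Y_i(t)-\mu_t$. Using the consistency of $N_{n,t}/n$, we get
\[
\sqrt n(\overline Y_{n,1}-\overline Y_{n,2}-\tau)=\frac{1}{\sqrt n}\sum_{i=1}^n\Big(\frac{T_i\epsilon_i(1)}{\rho}-\frac{(1-T_i)\epsilon_i(2)}{1-\rho}\Big)(1+o_P(1)).
\]
Splitting $T_i=\rho+(T_i-\rho)$, the sum becomes
\[
\sum_i W_i+\sum_i(T_i-\rho)Z_i,\qquad W_i=\epsilon_i(1)-\epsilon_i(2),\quad Z_i=\frac{\epsilon_i(1)}{\rho}+\frac{\epsilon_i(2)}{1-\rho}.
\]
Here $\Ex W=0$ and both $Z$ and $W$ have finite second moments.

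By \eqref{eq:thm:Property1:10}, the two normalized sums are asymptotically independent normals. Their variances are $\Ex W^2=\sigma_1^2+\sigma_2^2$ and $\rho(1-\rho)\Ex\widetilde Z^2=\rho(1-\rho)(\Ex Z^2-\sigma_{Pj}^2)$. Now
\[
\rho(1-\rho)\Ex Z^2=\frac{(1-\rho)\sigma_1^2}{\rho}+\frac{\rho\sigma_2^2}{1-\rho}+2\,\mathrm{Cov}(\epsilon(1),\epsilon(2)).
\]
Adding $\sigma_1^2+\sigma_2^2-2\,\mathrm{Cov}$ from $\Ex W^2$, the covariance terms cancel and we obtain $\sigma_1^2/\rho+\sigma_2^2/(1-\rho)-\rho(1-\rho)\sigma_{Pj}^2$. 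This is the first form of $\sigma_\tau^2$ in \eqref{eq:asyvaroftest}, and proves \eqref{eq:esttau}.

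\textbf{Second form of the variance.} Write $a=\Proj[\epsilon(1)|\bm\phi]$ and $b=\Proj[\epsilon(2)|\bm\phi]$. Projection is linear and orthogonal, so $\sigma_t^2=\breve\sigma_t^2+\Ex(\text{proj}_t)^2$ for $t=1,2$. It then suffices to check the scalar identity
\[
\frac{\Ex a^2}{\rho}+\frac{\Ex b^2}{1-\rho}-\rho(1-\rho)\Ex\Big(\frac a\rho+\frac b{1-\rho}\Big)^2=\Ex(a-b)^2 .
\]
Expanding, the left side is $\Ex a^2+\Ex b^2-2\Ex ab$, which equals the right side. This gives the second form in \eqref{eq:asyvaroftest}.

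\textbf{Tests.} For (i), under $H_0$ the denominator of $\mathcal T(n)$, multiplied by $\sqrt n$, converges in probability to $\sigma_e$ by \eqref{eq:const}. Slutsky's lemma then gives the limit $N(0,\sigma_\tau^2/\sigma_e^2)$. The bound $\sigma_{\mathcal T}^2\le1$ holds because $\sigma_{Pj}^2\ge0$.

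For (ii), take local alternatives $\tau=\delta/\sqrt n$ where only the means depend on $n$. The centred quantities $\epsilon_i(t)$ then have a fixed distribution, so the same decomposition applies to $\sqrt n(\overline Y_{n,1}-\overline Y_{n,2})-\delta$. The randomization depends only on $\bm X$, and the consistency of the variance estimators is unaffected by the mean shift. Slutsky's lemma gives the limit $N(\delta/\sigma_e,\sigma_{\mathcal T}^2)$.

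\textbf{Main obstacle.} The delicate step is justifying the joint use of $Z$ and $W$ in \eqref{eq:thm:Property1:10}, since $Z$ and $W$ are dependent functions of the same potential outcomes. The corollary allows arbitrary dependence between $Z$ and $W$, so this step should go through.
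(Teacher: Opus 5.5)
Your proposal is correct and follows the paper's proof essentially step for step: consistency from \eqref{eq:thm:Property1:8} (the paper cites the equivalent form \eqref{eq:thm:Property1:8ad}), the same decomposition $\sum_i(e_{i,1}-e_{i,2})+\sum_i(T_i-\rho)\big(e_{i,1}/\rho+e_{i,2}/(1-\rho)\big)$ fed into \eqref{eq:thm:Property1:10}, the same variance algebra, and Slutsky via $n\widehat{\sigma}_{n,1}^2/N_{n,1}+n\widehat{\sigma}_{n,2}^2/N_{n,2}\overset{P}\to\sigma_e^2$. Two cosmetic fixes are needed: first, $\Ex W^2=\sigma_1^2+\sigma_2^2-2\,\mathrm{Cov}(\epsilon(1),\epsilon(2))$, as you use later rather than as you first wrote it; second, the factors $n\rho/N_{n,1}$ and $n(1-\rho)/N_{n,2}$ differ, so you cannot pull out a single $(1+o_P(1))$, but tightness of each normalized arm sum (from \eqref{eq:thm:Property1:10} and the ordinary CLT) turns both into an additive $o_P(1)$.
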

For the simple randomization, $\sigma_{\tau}=\sigma_e$ and then $\sigma_{\mathcal{T}}=1$.

From \eqref{eq:test1}, it follows that if we  reject $H_0$ when $ |\mathcal{T}(n)| \ge u_{1-\alpha/2}$, then
$$ \lim_{n\to \infty}\Prob( |\mathcal{T}(n)| \ge u_{1-\alpha/2 }| H_0)\le \alpha. $$
   The test always controls the type I error rate but is conservative when $\sigma_{\mathcal{T}}<1$ ($\Proj\big[ (Y(1)-\mu_1)/\rho + (Y(2)-\mu_2)/(1-\rho)\big|\bm \phi(\bm X)\big]\not\equiv 0$).  Under the local alternatives, the asymptotic power is
\begin{align*}
 \lim_{n\to \infty}\Prob( |\mathcal{T}(n)| \ge u_{1-\alpha/2 }| H_A:\tau=\frac{\delta}{\sqrt{n}})= &  \Prob\left(\left|\frac{|\delta|}{\sigma_{\tau}}+N(0,1)\right|\ge u_{1-\alpha/2}\frac{\sigma_e}{\sigma_{\tau}}\right)\\
\le & \Prob\left(\left|\frac{|\delta|}{\sigma_{\tau}}+N(0,1)\right|\ge u_{1-\alpha/2}\right).
\end{align*}
When $|\delta|\le  \sigma_e u_{1-\alpha/2} $, the power is an increasing function of $\sigma_{\tau}^2\in(0,\sigma_e^2]$, and so the proposed CAR procedure losses the power comparing to the simple randomization.   When $|\delta|>(1+\epsilon_0)\sigma_e u_{1-\alpha/2} $ which means that the signal is significantly stronger than the noise, the power is a decreasing function of $\sigma_{\tau}^2\in(0,\sigma_e^2]$,   and so the proposed CAR procedure increases the power comparing to the simple randomization, where $\epsilon_0$ is the solution of $\ln(1+2/\epsilon_0)=2(1+\epsilon_0)u_{1-\alpha/2}^2 $, and $\epsilon_0=9.153\times 10^{-4}$ when $\alpha=0.05$, $\epsilon_0=8.566\times 10^{-3}$ when $\alpha=0.10$.
   
  If   $\widehat{\sigma}_{n,\tau}^2$ is an consistent estimator     of ${\sigma}_{\tau}^2$, $t=1,2,3$, and we define the  adjusted test statistic as 
\begin{equation}\label{TSadj}
\mathcal{T}_{adj}(n)=\frac{\sqrt{n}(\overline{Y}_{n,1}-\overline{Y}_{n,2})}{ \widehat{\sigma}_{n,\tau} }.
\end{equation}
then 
$$ \lim_{n\to \infty}\Prob( |\mathcal{T}_{adj}(n)| \ge u_{1-\alpha/2 }| H_0)= \alpha, $$
$$ \lim_{n\to \infty}\Prob( |\mathcal{T}_{adj}(n)| \ge u_{1-\alpha/2 }| H_A:\tau=\frac{\delta}{\sqrt{n}})
= \Prob\left(\left|\frac{|\delta|}{\sigma_{\tau}}+N(0,1)\right|\ge u_{1-\alpha/2}\right). $$
The adjusted test restores the type I error rate and increases the power.  The power is greater than the one under the simple randomization when $ \frac{Y(1)-\mu_1}{\rho} + \frac{Y(2)-\mu_2}{1-\rho} \not\perp\bm \phi(\bm X)$. 

When  $\bm \phi(\bm X_i)$, $i=1,\ldots,n$, are also available in the analysis stage, we can obtain the consistent estimator as follow. 
Let $T_{i,1}=T_i$, $T_{i,2}=1-T_i$.   We regress the residuals $T_{i,t}(Y_i-\overline{Y}_{n,t})$s over $T_{i,t}\bm\phi(\bm X_i)$s by fitting the linear model
$$ T_{i,t}(Y_i-\overline{Y}_{n,t}) =T_{i,t}\bm\phi(\bm X_i)\bm \alpha_t^{\prime}+\zeta_{i,t}, \; i=1,\ldots, n, $$
and obtain the estimator $\widehat{\bm \alpha}_{n,t}$ of $\bm \alpha_t$ and the new residuals $\widehat{\zeta}_{i,t}=T_i(t)(Y_i-\overline{Y}_{n,t})-T_i(t)\bm\phi(\bm X_i)\widehat{\bm \alpha}_{n,t}^{\prime}$, $i=1,\ldots, n$, $t=1,2$.  Define 
\begin{equation}\label{eq:RegEst}
\begin{aligned} & \widehat{\breve{\sigma}}_{n,t}^2=\frac{1}{\sum_{i=1}^n T_i(t)-q-1} \sum_{i=1}^n \big(\widehat{\zeta}_{i,t}\big)^2, \; t=1,2,\\
& \widehat{\breve{\sigma}}_{n,3}^2 =\frac{1}{n-2}\sum_{i=1}^n  \big(\bm \phi(\bm X_i)[\widehat{\bm \alpha}_{n,1}-\widehat{\bm \alpha}_{n,2}]^{\prime}\big)^2 \; \text{ and }\\
& \widehat{\sigma}_{n,\tau}^2=:\widehat{\sigma}_{n,\tau,reg}^2
=n\left(\frac{\widehat{\breve{\sigma}}_{n,1}^2}{N_{n,1}}
+\frac{\widehat{\breve{\sigma}}_{n,2}^2}{N_{n,2}}
+\frac{\widehat{\breve{\sigma}}_{n,3}^2}{n}\right).
\end{aligned}
\end{equation}

\begin{theorem} \label{thm:regest} Under the conditions in Theorem \ref{thm:test}, 
$$\widehat{\breve{\sigma}}_{n,t}^2\overset{P}\to  \breve{\sigma}_t^2,  \;t=1,2,3 \text{ and 
} \widehat{\sigma}_{n,\tau,reg}^2\overset{P}\to \sigma_{\tau}^2.$$
\end{theorem}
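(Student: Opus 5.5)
The plan is to reduce everything to weighted sample moments of the form $\frac1n\sum_{i=1}^n T_{i,t} g(\bm X_i,Y_i(t))$ and then evaluate their limits with the law of large numbers \eqref{eq:thm:Property1:8}. The key consequence is that for any integrable $g$,
\[
\frac1n\sum_{i=1}^n T_i\, g_i\overset{P}\to\rho\Ex g ,
\]
and for treatment $2$ the same holds with $T_i$ replaced by $1-T_i$ and $\rho$ by $1-\rho$. This follows from \eqref{eq:thm:Property1:8} applied to $Z=g$, together with the law of large numbers for $\frac1n\sum g_i$. Corollary \ref{thm:Property3} applies because all moments of $\bm\phi$ are finite, so \eqref{eq:thm:Property1:8} holds for every $0<\gamma<1$. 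We apply it with $g$ equal to each of the entries of $\bm\phi^{\otimes 2}$, $\bm\phi\, Y(t)$, $\bm\phi$, $Y(t)$, $Y(t)^2$ and $1$, all of which are integrable. The moment conditions on $Y(t)$ are needed only through $\Ex Y(t)^2<\infty$, which is implicit since $\sigma_t^2<\infty$.

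\textbf{Step 1: the regression coefficients.} For $t=1$, the least squares estimator is
\[
\widehat{\bm\alpha}_{n,1}=\Big(\sum_i T_i(Y_i-\overline Y_{n,1})\bm\phi_i\Big)\Big(\sum_i T_i\bm\phi_i^{\otimes2}\Big)^{-1}.
\]
We assume $\Ex\bm\phi^{\otimes2}$ is nonsingular; otherwise we pass to a basis of $Span\{\bm\phi(\bm X)\}$ and use a generalized inverse, which changes only fitted values that are invariant anyway. By Step 0, $\frac1n\sum_iT_i\bm\phi_i^{\otimes2}\to\rho\Ex\bm\phi^{\otimes2}$ in probability. Also, $\frac1n\sum_iT_iY_i\bm\phi_i\to\rho\Ex[Y(1)\bm\phi]$ and $\overline Y_{n,1}\to\mu_1$, the latter by \eqref{eq:const}. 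Hence $\widehat{\bm\alpha}_{n,1}\overset{P}\to\bm\alpha_1$, the projection coefficient satisfying $\Proj[Y(1)-\mu_1|\bm\phi(\bm X)]=\langle\bm\phi(\bm X),\bm\alpha_1\rangle$. The same argument gives $\widehat{\bm\alpha}_{n,2}\overset{P}\to\bm\alpha_2$.

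\textbf{Step 2: the three variance pieces.} We expand
\[
\sum_i\widehat\zeta_{i,1}^2=\sum_iT_i(Y_i-\overline Y_{n,1})^2-2\sum_iT_i(Y_i-\overline Y_{n,1})\langle\bm\phi_i,\widehat{\bm\alpha}_{n,1}\rangle+\sum_iT_i\langle\bm\phi_i,\widehat{\bm\alpha}_{n,1}\rangle^2 .
\]
Dividing by $N_{n,1}-q-1$, where $N_{n,1}/n\to\rho$, and using Step 0 with Step 1, the limit is
\[
\sigma_1^2-2\Ex[(Y(1)-\mu_1)\langle\bm\phi,\bm\alpha_1\rangle]+\Ex\langle\bm\phi,\bm\alpha_1\rangle^2=\sigma_1^2-\Ex\big(\Proj[Y(1)-\mu_1|\bm\phi]\big)^2=\breve\sigma_1^2 .
\]
The case $t=2$ is identical. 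For $t=3$, the estimator $\frac1{n-2}\sum_i\langle\bm\phi_i,\widehat{\bm\alpha}_{n,1}-\widehat{\bm\alpha}_{n,2}\rangle^2$ is a quadratic form in $\widehat{\bm\alpha}_{n,1}-\widehat{\bm\alpha}_{n,2}$ with matrix $\frac1n\sum_i\bm\phi_i^{\otimes2}\to\Ex\bm\phi^{\otimes2}$. It therefore converges to $\Ex\langle\bm\phi,\bm\alpha_1-\bm\alpha_2\rangle^2$, which equals $\breve\sigma_3^2$ by linearity of the projection.

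\textbf{Step 3: the combined estimator.} Since $nN_{n,1}^{-1}\to1/\rho$ and $nN_{n,2}^{-1}\to1/(1-\rho)$, we get $\widehat\sigma^2_{n,\tau,reg}\to\breve\sigma_1^2/\rho+\breve\sigma_2^2/(1-\rho)+\breve\sigma_3^2$. This equals $\sigma_\tau^2$ by the second identity in \eqref{eq:asyvaroftest}, which Theorem \ref{thm:test} provides.

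\textbf{Main obstacle.} The only delicate point is justifying the weighted law of large numbers for $g$ that involve the responses and $\bm\phi$ jointly. The variable $Z$ in Theorem \ref{thm:Property1} may be any feature of the unit, as long as $(\bm X_i,Z_i)$ are i.i.d. and $Z_i$ does not enter the allocation. Taking $Z=g(\bm X,Y(1),Y(2))$ satisfies this, but the requirement should be checked explicitly, since the potential outcomes are not used in randomization.
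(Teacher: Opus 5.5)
Your proposal is correct and follows essentially the same route as the paper. Both reduce everything to weighted averages $\frac1n\sum_i T_{i,t}g_i$, obtain $\widehat{\bm\alpha}_{n,t}\overset{P}\to\bm\alpha_t$ from \eqref{eq:thm:Property1:8}, and then pass to the limit in the residual sums of squares and in the quadratic form for $t=3$. You are more explicit than the paper in expanding $\sum_i\widehat\zeta_{i,1}^2$ and in using the identity \eqref{eq:asyvaroftest} for the combined estimator.

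One small citation fix: \eqref{eq:thm:Property1:8} comes from Theorem \ref{thm:Property1}(ii), not directly from Corollary \ref{thm:Property3}. Its condition $\gamma>2/(3r_0-2)$ holds for every $\gamma\in(0,1)$ once all moments of $\|\bm\phi\|$ are finite. Equivalently, Corollary \ref{thm:Property3}(i) gives $\Ex\|\bm\Lambda_n\|=O(n^{\gamma/2})=o(n^{\gamma})$, which is what that proof needs.
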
 
  
 Theorems \ref{thm:test} and \ref{thm:regest} will follow from \eqref{eq:thm:Property1:8} and \eqref{eq:thm:Property1:10}.  
  
 It is important that the asymptotic variance $\vec{\sigma}_Z^2$ in \eqref{eq:thm:Property1:10} has the closed form $\Ex(Z-\Proj[Z|\bm\phi(\bm X)])^2$ and does not exceed $\Ex Z^2$   because in this case the classical test is valid and the variance is estimable so that the test can be adjusted to have the precise type I error rate. 
  
  When the stratified randomization is adopted,   Theorems  \ref{thm:test} and \ref{thm:regest} remain true   even when $\gamma=0$ by Theorem
\ref{thm:Property4}.
 Under the stratified randomization, we always have $\vec{\sigma}_Z^2=\Ex(Z-\Proj[Z|\bm\phi(\bm X)])^2$. This is the main reason why the classical test under the stratified randomization is always valid and can be adjusted in varies ways.   

\bigskip
If only $Y_i, T_i, i=1,\ldots, n$ are available in the analysis stage, we can construct a moving block estimator of $\sigma_{\tau}^2$ as follows. Let $l=l_n$  be an integer between $1$ and $n$. Let  
$$ \overline{Y}_{1}(j,l)=\frac{\sum_{i=j+1}^{j+l}T_{i}Y_i}{\sum_{i=j+1}^{j+l}T_{i}},\;
\overline{Y}_{2}(j,l)=\frac{\sum_{i=j+1}^{j+l}(1-T_{i})Y_i}{\sum_{i=j+1}^{j+l}(1-T_{i})} \text{ and } \widehat{\tau}(j,l)=\overline{Y}_{1}(j,l)-\overline{Y}_{2}(j,l) $$
be respectively the estimators of $\mu_1$, $\mu_2$ and $\tau$ basing on the data $\{T_i,Y_i; i=j+1,\ldots,j+l\}$. Denote the sample variance of $\{\widehat{\tau}(j,l); j=0,1,\ldots, n-l\}$ by 
$$ SV_n= \frac{1}{n-l}\sum_{j=0}^{n-l}\Big(\widehat{\tau}(j,l)-\overline{\widehat{\tau}(\cdot,l)} \Big)^2, \text{ where } 
\overline{\widehat{\tau}(\cdot,l)}=\frac{1}{n-l+1}\sum_{j=0}^{n-l}\widehat{\tau}(j,l). $$
 Then $\widehat{\sigma}_{n,\tau,mb}^2=:l\cdot SV_n$ is a consistent estimator of $\sigma_{\tau}^2$. 
 
 \begin{theorem}\label{thm:mb} Suppose $l/n\to 0$ and $n^{\gamma/2+\epsilon_0}/l\to 0$ for an $\epsilon_0>0$. Assume $1\in Span\{\bm \phi(\bm X)\}$. 
 Under the conditions in Theorem \ref{thm:test},
 \begin{equation}\label{eq:thm:mb:1}
 \widehat{\sigma}_{n,\tau,mb}^2=l\cdot SV_n \overset{P}\to \sigma_{\tau}^2. 
 \end{equation}
 \end{theorem}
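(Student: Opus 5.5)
We need to prove Theorem \ref{thm:mb}: the moving block estimator $l\cdot SV_n$ is consistent for $\sigma_\tau^2$.

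\emph{Linearization of each block estimator.} Write $\mu_t$-centered variables $\epsilon_i(t)=Y_i(t)-\mu_t$. For a block $B_j=\{j+1,\ldots,j+l\}$, set $N_1(j,l)=\sum_{i\in B_j}T_i$ and $N_2(j,l)=l-N_1(j,l)$. Then
$$\widehat\tau(j,l)-\tau=\frac{\sum_{i\in B_j}T_i\epsilon_i(1)}{N_1(j,l)}-\frac{\sum_{i\in B_j}(1-T_i)\epsilon_i(2)}{N_2(j,l)}.$$
Since $1\in Span\{\bm\phi(\bm X)\}$, the block count imbalance $\sum_{i\in B_j}(T_i-\rho)=\langle \bm\Lambda_{j+l}-\bm\Lambda_j,\bm x_1\rangle$ for a fixed vector $\bm x_1$. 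By Corollary \ref{thm:Property3}(i) this is $O(n^{\gamma/2})$ in every $L_r$, uniformly in $j$. Hence $N_1(j,l)/l=\rho+O(n^{\gamma/2}/l)$, and by a union bound with high moments, $\max_j |N_1(j,l)/l-\rho|=O_P(n^{\gamma/2+\epsilon_0/2}/l)\to0$, using $n^{\gamma/2+\epsilon_0}/l\to0$. Replacing the denominators by $l\rho$ and $l(1-\rho)$ gives
$$\widehat\tau(j,l)-\tau=\frac1l\sum_{i\in B_j}\xi_i+R_j,\qquad \xi_i=\frac{T_i\epsilon_i(1)}{\rho}-\frac{(1-T_i)\epsilon_i(2)}{1-\rho}.$$
Next write $\xi_i=\big(\epsilon_i(1)/\rho-\epsilon_i(2)/(1-\rho)\big)\cdot$ a balanced part, plus $(T_i-\rho)Z_i$ with $Z_i=\epsilon_i(1)/\rho+\epsilon_i(2)/(1-\rho)$. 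Since $T_i/\rho=1+(T_i-\rho)/\rho$ and $(1-T_i)/(1-\rho)=1-(T_i-\rho)/(1-\rho)$, we get $\xi_i=W_i+(T_i-\rho)Z_i$ with $W_i=\epsilon_i(1)-\epsilon_i(2)$ and $\Ex W=0$. The remainder $R_j$ is a product of $O_P(n^{\gamma/2+\epsilon_0/2}/l)$ with the block average, so $l\,\Ex R_j^2=o(1)$ uniformly in $j$.

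\emph{Block sums via the decomposition \eqref{eq:thm:Property3:2}.} Apply \eqref{eq:thm:Property3:2} on differences over $B_j$. This writes $\sum_{i\in B_j}(T_i-\rho)Z_i$ as a martingale difference sum $\sum_{i\in B_j}\eta_i$, with $\eta_i=(T_i-\rho)\widetilde Z_i-\Ex[(\ell_i-\rho)\widetilde Z_i|\bm\Lambda_{i-1}]$, plus $\langle\bm\Lambda_{j+l}-\bm\Lambda_j,\bm x_0\rangle$ plus $O(n^{\gamma/2})$ in $L_r$. Because $l\gg n^{\gamma/2}$, we obtain $\sum_{i\in B_j}\xi_i=\sum_{i\in B_j}(W_i+\eta_i)+O(n^{\gamma/2})$ in $L_2$ uniformly. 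The pair $(W_i,\eta_i)$ forms a martingale difference sequence with respect to $\mathcal F_i=\sigma(\bm X_k,Y_k(t),T_k,k\le i)$. As in the proof of \eqref{eq:thm:Property1:10}–\eqref{eq:thm:Property1:11}, the conditional variances satisfy
$$\frac1l\sum_{i\in B_j}\Ex[(W_i+\eta_i)^2|\mathcal F_{i-1}]\to \Ex W^2+\rho(1-\rho)\Ex\widetilde Z^2,$$
and the cross term vanishes because $\Ex[T_i-\rho|\mathcal F_{i-1},\bm X_i]=\ell_i-\rho\to0$. This limit equals $\sigma_\tau^2$ by the identity in \eqref{eq:asyvaroftest}, since $\Ex W^2+\rho(1-\rho)\Ex Z^2=\sigma_1^2/\rho+\sigma_2^2/(1-\rho)$ and $\rho(1-\rho)\Ex(\Proj[Z|\bm\phi])^2=\rho(1-\rho)\sigma_{Pj}^2$.

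\emph{Consistency of the sample variance.} Put $S_j=l^{-1/2}\sum_{i\in B_j}(W_i+\eta_i)$. Then
$$l\cdot SV_n=\frac1{n-l}\sum_j S_j^2-\Big(\frac{1}{\sqrt{l}(n-l+1)}\sum_j \sum_{i\in B_j}(W_i+\eta_i)\Big)^2\cdot(\text{const})+o_P(1).$$
The mean term is $O_P(\sqrt{l/n})\to0$, since the full sum is $O_P(\sqrt n)$ by \eqref{eq:thm:Property1:11}. It therefore remains to show $\frac1{n-l}\sum_j S_j^2\to\sigma_\tau^2$ in probability. For the expectation, $\Ex S_j^2\to\sigma_\tau^2$ uniformly in $j\ge j_0$; early blocks are negligible. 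The variance of the average is controlled by splitting the blocks into groups $j\equiv k \pmod l$. Within a group, the blocks are disjoint, and the $S_j^2-\Ex[S_j^2|\mathcal F_j]$ form martingale differences over the group, so their average has vanishing variance given uniform integrability of $S_j^2$. Uniform integrability follows from Burkholder's inequality with the higher moments of $Z,W$ after truncation. The conditional means $\Ex[S_j^2|\mathcal F_j]$ concentrate because $\ell_i-\rho\to0$ in $L_1$ by \eqref{eq:thm:Property1:7}, and $\widetilde Z$ and $W$ are i.i.d. in the new unit.

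\emph{Main obstacle.} The hardest step is this last one: the uniform-in-$j$ law of large numbers for the conditional block variances. The conditional variance of $\eta_i$ involves $\ell_i(1-\ell_i)$ and $\Ex[(\ell_i-\rho)\widetilde Z_i|\bm\Lambda_{i-1}]$, which depend on the Markov state $\bm\Lambda_{i-1}/i^\gamma$. The plan is to bound these by $\Ex|\ell_i-\rho|$, which is $O(i^{-\gamma/2})$-ish from Theorem \ref{thm:Property2} and Corollary \ref{thm:Property3}, and to use $\Ex W^2,\Ex Z^2<\infty$ with truncation to handle the lack of higher moments of the outcomes.
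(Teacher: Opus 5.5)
\textbf{Gap in the second step (shared with the paper's proof).} Your route is essentially the paper's: linearize each block estimator, apply \eqref{eq:thm:Property3:2} blockwise, then prove a law of large numbers for the squared block martingale sums. It breaks at the same point as the paper's argument. In your second step you absorb $\langle\bm\Lambda_{j+l}-\bm\Lambda_j,\bm x_0\rangle$ into an $O(n^{\gamma/2})$ remainder and drop it ``because $l\gg n^{\gamma/2}$''. That comparison is adequate for block \emph{means}, since the error there is $O(n^{\gamma/2})/l$. But $l\cdot SV_n$ is a variance on the scale of $S_j=l^{-1/2}\sum_{i\in B_j}(\cdot)$. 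The remainder therefore contributes $O(n^{\gamma/2}/\sqrt l)$ to $S_j$ and $O(n^{\gamma}/l)$ to $l\cdot SV_n$. You need $n^{\gamma}/l\to 0$, and this does not follow from $n^{\gamma/2+\epsilon_0}/l\to0$. The paper's proof makes the same slip. It writes $\overline Y_1(j,l)-\mu_1=\frac{Z_1(j,l)}{l}(1+o(1))$, silently dropping the additive $o(n^{\gamma/2+\epsilon_0})/l$ error coming from the numerator.

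\textbf{Why the gap is real.} Decompose $\bm\Lambda_{j+l}-\bm\Lambda_j=\sum_{i\in B_j}(T_i-\ell_i)\bm\phi_i+\sum_{i\in B_j}(\ell_i-\rho)\bm\phi_i$.
\begin{itemize}
\item Since $|\ell(x)-\rho|\le c|x|$ and $\|\bm\Lambda_{i-1}\|=O(i^{\gamma/2})$, we have $\ell_i-\rho=O(n^{-\gamma/2})$ in $L_2$ for $i\asymp n$. So the drift part is $O(l n^{-\gamma/2})$, which is $o(\sqrt l)$ whenever $l=o(n^{\gamma})$.
\item The martingale part satisfies $\frac1l\Ex\langle\cdot,\bm x_0\rangle^2\to\rho(1-\rho)\bm x_0\Sigma\bm x_0^{\prime}=\rho(1-\rho)\sigma_{Pj}^2$, by the truncation argument of \eqref{eq:proof:th1:ell}.
\item Its covariances with $W_i$ and $\eta_i$ vanish, because $T_i-\ell_i$ has conditional mean zero, $\Ex[\widetilde Z\bm\phi]=\bm 0$ and $\ell_i\to\rho$.
\end{itemize}
Hence, when $n^{\gamma/2+\epsilon_0}=o(l)$ and $l=o(n^{\gamma})$, the computation you carry out gives
$$\frac1l\Ex\Big(\sum_{i\in B_j}\xi_i\Big)^2\to\sigma_\tau^2+\rho(1-\rho)\sigma_{Pj}^2=\sigma_e^2 .$$
So $l\cdot SV_n$ targets $\sigma_e^2$, not $\sigma_\tau^2$, whenever $\sigma_{Pj}^2>0$. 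On blocks shorter than the relaxation time $n^\gamma$ of $\bm\Lambda$, the design behaves like simple randomization. This agrees with the moving-block test staying conservative in the paper's simulations, where $l=[\sqrt n]$ is comparable to $n^\gamma$.

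\textbf{The fix.} The missing ingredient is the stronger hypothesis $n^\gamma/l\to0$. With it, $\Ex\|\bm\Lambda_{j+l}-\bm\Lambda_j\|^2=O(n^\gamma)=o(l)$, the remainder is genuinely negligible, and the rest of your plan goes through. That includes the denominator step, the identification $\Ex W^2+\rho(1-\rho)\Ex\widetilde Z^2=\sigma_\tau^2$, and the law of large numbers obtained by grouping blocks by $j \bmod l$.
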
 

\section{Proofs of the results} 
\label{sec:proof}

\subsection{The properties of the CAR procedures}
We write $\bm \phi_n=\bm\phi(\bm X_n)$, $\bm \phi=\bm \phi(\bm X)$.  
Let $\Sigma=\Ex\left[\bm \phi^{\prime}\bm \phi\right]$.  Without loss of generality, we assume that $\Sigma$ is non-singular. In fact, if $\Sigma=\bm 0$, then $\bm \phi=\bm \phi_n=\bm \Lambda_n=0$ a.s., and $\ell_n\equiv \rho$, \eqref{eq:thm:Property1:7}-\eqref{eq:thm:Property1:11}, \eqref{eq:thm:Property3:1}-\eqref{eq:thm:Property3:2} are obvious. If $\Sigma\ne \bm 0$, then there is an orthogonal matrix $U$ such that $\Sigma=U diag(\sigma_1^2,\ldots,\sigma_{q_{\ast}}^2,0,\ldots) U^{\prime}$, where $\sigma_t^2$s are positive numbers. Let ${\bm \psi}=\bm \phi U^{\prime}$ and ${\bm \psi}_i=\bm \phi_i U^{\prime}$. Then $\Ex[{\bm \psi}^{\prime}{\bm \psi}]=diag(\sigma_1^2,\ldots,\sigma_{q_{\ast}}^2,0,\ldots)$, $\bm\Lambda_n=\sum_{i=1}^n(T_i-\rho) {\bm \psi}_iU$,
$\|\bm \Lambda_n\|^2=\|\sum_{i=1}^n(T_i-\rho){\bm \psi}_i\|^2$, $\langle\bm \Lambda_n,\bm \phi_{n+1}\rangle
=\langle\sum_{i=1}^n(T_i-\rho){\bm \psi}_i,{\bm \psi}_{n+1}\rangle$. So, it is sufficient to consider ${\bm \psi}=({\psi}^{(1)},\ldots,{\psi}^{(q)})$. Since $\Prob({\psi}^{(t)}=0, t=q_{\ast}+1,\ldots q)=1$,   it is sufficient to consider the subverter ${\bm\psi}_{\ast} =({\psi}^{(1)},\ldots, {\psi}^{(q_{\ast})})$
with $\Ex[{\bm\psi}_{\ast}^{\prime}{\bm\psi}_{\ast}]=diag(\sigma_1^2,\ldots,\sigma_{q_{\ast}}^2)$ being non-singular.

Let  $\mathscr{F}_n=\sigma(\bm X_i,T_i,Z_i,W_i,i=1,2,\ldots,n)$ be the history sigma field generated by results of variables and allocations up to stage $n$.   Then, $
	\Prob(T_{n+1}=1|\mathscr{F}_n, \bm{X}_{n+1}) =\ell_{n+1}$ by \eqref{eq:allocation-2}, where 
 $\ell_{n}=\ell\left(\frac{\langle \bm \Lambda_{n-1},\bm \Phi_n\rangle}{(n-1)^{\gamma}}\right)$.  
 
 Let $r\in [2,r_0]$. 
Write   $\bm \psi_n=(T_n-\rho)\bm \phi(\bm X_n)$. Then $\bm \Lambda_{n+1}=\bm\Lambda_n+\bm\psi_n$. 
From the elementary inequality that
\begin{equation}\label{eq:element1}\|\bm u+\bm v\|^{r }-\|\bm u\|^{r }\le r  \langle\bm u,\bm v\rangle\|\bm u\|^{r  -2}+c_{r }   \left(\|\bm v\|^{r }+\|\bm v\|^2 \|\bm u\|^{r  -2}\right), \; r  \ge 2,
\end{equation}
where $c_{r }   $ is a constant which depends only on $r $,  we have
\begin{align*}
  &\Ex\left[\|\bm\Lambda_{n+1}\|^{r }|\mathscr F_n,\bm X_{n+1}\right]-\|\bm\Lambda_n\|^{r }\\
  \le &    \Ex\left[r  \langle\bm \Lambda_n,\bm \psi_{n+1}\rangle\|\bm   \Lambda_n\|^{r  -2}
  +c_{r }   \|\bm \psi_{n+1}\|^{r }
 + c_{r } \|\bm \psi_{n+1}\|^2\|\bm   \Lambda_n\|^{r  -2} \big|\mathscr F_n,\bm X_{n+1}\right]\\
 \le & \Ex\left[ r   (T_{n+1}-\rho)\langle\bm \Lambda_n,\bm \phi_{n+1})\rangle\|\bm   \Lambda_n\|^{r  -2}\big|\mathscr F_n,\bm X_{n+1}\right] 
   +c_{r }\left(\|\bm \phi_{n+1}\|^{r }
 +   \|\bm   \phi_{n+1}\|^2\|\bm   \Lambda_n\|^{r  -2}\right)\\
= & r    (\ell_{n+1}-\rho)\langle\bm \Lambda_n,\bm \phi_{n+1}\rangle\|\bm   \Lambda_n\|^{r  -2}
   +  c_{r }\left(\|\bm \phi_{n+1}\|^{r }+ \|\bm   \phi_{n+1}\|^2\|\bm   \Lambda_n\|^{r  -2}\right)\\
   = & -r  \left|\ell_{n+1}-\rho\right|\cdot\left|\langle\bm \Lambda_n,\bm \phi_{n+1}\rangle\right|\cdot\|\bm   \Lambda_n\|^{r  -2}  
   +  c_{r }\left(\|\bm \phi_{n+1}\|^{r }+ \|\bm   \phi_{n+1}\|^2\|\bm   \Lambda_n\|^{r  -2}\right),
\end{align*}
where the last equality is due to the fact that $\ell_{n+1}$ is a non-increasing function of 
$\langle\bm \Lambda_n,\bm \phi_{n+1}\rangle$.  Notice that there is a $\delta>0$, $|\ell(x)-\rho|\ge |x| |\ell^{\prime}(0)|/2$ when $|x|\le \delta$. While, when $|x|\ge \delta$, $ |\ell(x)-\rho|\ge (\rho-\ell(\delta))\wedge (\ell(-\delta)-\rho)>0$. It follows that, there exists a constant $\lambda_1>0$ such that $|\ell(x)-\rho|\ge \lambda_1 (|x|\wedge 1)$. Thus, $\Ex\left[\left|\ell_{n+1}-\rho\right|\cdot\left|\langle\bm \Lambda_n,\bm \phi_{n+1}\rangle\right|\big|\mathscr{F}_n\right]\ge \lambda_1 n^{\gamma}\hbar(\bm\Lambda_n/n^{\gamma} )$, where
$\hbar(\bm x)=\Ex\left[\left|\langle\bm x,\bm \phi\rangle\right|^2\wedge \left|\langle\bm x,\bm \phi\rangle\right|\right]$. 

Let $\lambda_{min}$ be the smallest eigenvalue of $\Sigma$. Then $\Ex\langle\bm x,\bm \phi\rangle^2=\bm x\Sigma\bm x^{\prime}\ge \lambda_{min}\|\bm x\|^2$. 
Choose $M\ge 1$ large enough such that $\Ex[\|\bm\phi\|^2I\{\|\bm\phi\|\ge M\}]\le \lambda_{min}/2$. Notice
\begin{align*}
\langle\bm x,\bm \phi\rangle^2\le & \langle\bm x,\bm \phi\rangle^2I\{|\langle\bm x,\bm \phi\rangle|\le 1\}\\
&+
M \|\bm x\|\, \big|\langle\bm x,\bm \phi\rangle\big|I\{|\langle\bm x,\bm \phi\rangle|> 1\}
+\|\bm x\|^2\|\bm \phi\|^2 I\{|\|\bm \phi\|| \ge M\}\\
\le & [1\vee (M\|\bm x\|)]\left[\langle\bm x,\bm \phi\rangle^2\wedge |\langle\bm x,\bm \phi\rangle|\right]+\|\bm x\|^2\|\bm \phi\|^2 I\{|\|\bm \phi\|| \ge M\}.
\end{align*}
It follows that
$$ \hbar(\bm x)\ge \frac{\Ex\langle\bm x,\bm \phi\rangle^2-\|\bm x\|^2\Ex[\|\bm\phi\|^2I\{\|\bm\phi\|\ge M\}]}{1\vee (M\|\bm x\|)} 
\ge \frac{\lambda_{min}\|\bm x\|^2/2}{1\vee (M\|\bm x\|)}. $$
Let $\lambda_2=\lambda_{min}/(2M)$. We have  $\hbar(\bm x)\ge  \lambda_2(\|\bm x\|^2\wedge \|\bm x\|)$. It follows that
 \begin{align}\label{eq:multi-drift-p-2}
  &\Ex\left[\|\bm\Lambda_{n+1}\|^{r }|\mathscr F_n\right]-\|\bm\Lambda_n\|^{r }\nonumber \\
   \le  & -r\lambda_1 n^{\gamma} \hbar\left( \frac{ \bm \Lambda_{n}}{n^{\gamma}}\right)\cdot\|\bm   \Lambda_n\|^{r  -2} 
  +c_{r}\left( \beta_{r}+\beta_2\|\bm   \Lambda_n\|^{r -2}\right),\nonumber \\
  \le &- r\lambda_1\lambda_2 \|\bm\Lambda_n\|^{r-1 }\left(  \frac{ \|\bm \Lambda_{n}\|}{n^{\gamma}}\wedge 1\right)
  +c_{r}\left( \beta_{r}+\beta_2\|\bm   \Lambda_n\|^{r -2}\right),
\end{align}
 where $\beta_{r}=\Ex[\|\bm \phi(\bm X)\|^{r}]$. Consider  the function
 $$ f(x)=-\alpha_1 x^{r-1}\left(\frac{x}{n^{\gamma}}\wedge 1\right)+\alpha_2x^{r-2}, \; x\ge 0, $$
 where $\alpha_1,\alpha_2>0$. When $ 0\le x\le n^{\gamma}$, 
\begin{align*}  f(x)= -\alpha_1\frac{x^{r}}{n^{\gamma}}+\alpha_2x^{r-2}
 \le & \begin{cases} 0, & \text{ when } \alpha_1\frac{x^2}{n^{\gamma}}\ge \alpha_2\\
 \alpha_2\Big(\frac{\alpha_2}{\alpha_1}\Big)^{\frac{r-2}{2}}n^{\frac{\gamma}{2}(r-2)},  
& \text{ when } \alpha_1\frac{x^2}{n^{\gamma}}< \alpha_2\end{cases} \\
 \le &\alpha_2\Big(\frac{\alpha_2}{\alpha_1}\Big)^{\frac{r-2}{2}}n^{\frac{\gamma}{2}(r-2)}. 
 \end{align*}
 Similarly, when $ x> n^{\gamma}$,
 $$ f(x)=-\alpha_1x^{r-1}+\alpha_2x^{r-2}\le \alpha_2\Big(\frac{\alpha_2}{\alpha_1}\Big)^{r-2}, \; x\ge 0. $$
 It follows that
 $$ f(x)\le \alpha_2\Big(\frac{\alpha_2}{\alpha_1}\Big)^{r-2}+\alpha_2\Big(\frac{\alpha_2}{\alpha_1}\Big)^{\frac{r-2}{2}}n^{\frac{\gamma}{2}(r-2)}.
 $$
 By choosing $\alpha_1=r\lambda_1\lambda_2/2$ and $\alpha_2=c_r\beta_2$,  it follows that
 \begin{align*} 
  &\Ex\left[\|\bm\Lambda_{n+1}\|^{r }|\mathscr F_n\right]-\|\bm\Lambda_n\|^{r }  
  \le  - \frac{r\lambda_1\lambda_2}{2} \|\bm\Lambda_n\|^{r-1 }\Big(  \frac{ \|\bm \Lambda_{n}\|}{n^{\gamma}}\wedge 1\Big)
  +C_r n^{\frac{\gamma}{2}(r-2)}.
\end{align*}
  
  Hence, we have the following recursive formula:
 \begin{align}
  &\Ex\left[\|\bm\Lambda_{n+1}\|^{r }|\right]-\Ex\left[\|\bm\Lambda_n\|^{r }\right]\nonumber \\
  \le & - \frac{r\lambda_1\lambda_2}{2} \Ex\Big[\|\bm\Lambda_n\|^{r-1 }\Big(  \frac{ \|\bm \Lambda_{n}\|}{n^{\gamma}}\wedge 1\Big)\Big]
  +C_{r}  n^{\frac{\gamma}{2}(r-2)}
  \label{eq:multi-drift-p-3}\\
  \le &C_{r}  n^{\frac{\gamma}{2}(r-2)}.
  \label{eq:multi-drift-p-4}
  \end{align}
   By \eqref{eq:multi-drift-p-4},
\begin{equation}\label{eq:proof:thm:9}  \Ex[\|\bm\Lambda_n\|^{r_0}]\le C_{r_0} \sum_{i=1}^{n-1}i^{\frac{\gamma }{2}(r_0-2)}\le C_{r_0} n^{\frac{\gamma }{2}(r_0-2)+1}, \;\; n\ge 1. 
\end{equation}

Let $E=\{\|\bm \Lambda_n\|\le n^{\gamma}\}$ and $Q_{n,r}=\Ex\big[\|\bm\Lambda_n\|^{r-1 }\big(  \frac{ \|\bm \Lambda_{n}\|}{n^{\gamma}}\wedge 1\big)\big]$. 
Then 
$$ \Ex\left[\|\bm \Lambda_n\|^rI_E\right]\le n^{\gamma}Q_{n,r}\;
\text{ and }\; \Ex\left[\|\bm \Lambda_n\|^{r-1}I_{E^c}\right]\le Q_{n,r}. $$
For $\frac{1}{f}+\frac{1}{g}=1$, $f,g>1$, by the H\"older inequality we have
\begin{align*}
&\Ex\left[\|\bm \Lambda_n\|^{r}I_{E^c}\right]=\Ex\left[\|\bm \Lambda_n\|^{\frac{r-1}{f}}\|\bm \Lambda_n\|^{\frac{r-1}{g}+1}I_{E^c}\right]\\
\le &\left(\Ex\left[\|\bm \Lambda_n\|^{r-1}I_{E^c}\right]\right)^{1/f}\left(\Ex\left[\|\bm \Lambda_n\|^{r-1+g}I_{E^c}\right]\right)^{1/g} 
\le Q_{n,r}^{1/f}\left(\Ex\left[\|\bm \Lambda_n\|^{r-1+g}\right]\right)^{1/g}. 
\end{align*}
It follows that
$$ \Ex[\|\bm \Lambda_n\|^r]\le n^{\gamma}Q_{n,r}+Q_{n,r}^{1/f}\left(\Ex\left[\|\bm \Lambda_n\|^{r-1+g}\right]\right)^{1/g}. $$

Now, let $m:=m_n\in\{0,1,\ldots, n\}$ be the last one for which
$$- \frac{r\lambda_1\lambda_2}{2} Q_{m,r}
  +C_rm^{\frac{\gamma}{2}(r-2)} \ge 0. $$
Then 
$$ Q_{m,r}\le \frac{2C_r}{r\lambda_1\lambda_2}m^{\frac{\gamma}{2}(r-2)}, $$
and, by \eqref{eq:multi-drift-p-3} and \eqref{eq:multi-drift-p-4},
\begin{align*} 
& \Ex\left[\|\bm \Lambda_n\|^{r}\right]\le \Ex\left[\|\bm \Lambda_{m+1}\|^{r}\right] \le \Ex\left[\|\bm \Lambda_{m}\|^{r}\right]
+C_{r}m^{\frac{\gamma}{2}(r-2)}\nonumber\\
\le &   m^{\gamma}Q_{m,r}+Q_{m,r}^{1/f}\left(\Ex\left[\|\bm \Lambda_m\|^{r-1+g}\right]\right)^{1/g}
+C_{r}m^{\frac{\gamma}{2}(r-2)}\nonumber\\
\le & C_r\left[m^{\frac{\gamma}{2} r} +\big(m^{\frac{\gamma}{2}(r-2)}\big)^{1/f}\left(\Ex[\|\bm \Lambda_m\|^{r-1+g}]\right)^{1/g}\right]. 
\end{align*}
It follows that 
\begin{equation}\label{eq:proof:thm:10}
  \Ex\left[\|\bm \Lambda_n\|^{r}\right]
  \le  C_r\Big[n^{\frac{\gamma}{2} r} +\big(n^{\frac{\gamma}{2}(r-2)}\big)^{1/f}\Big(\max_{m\le n} \Ex[\|\bm \Lambda_m\|^{r-1+g}]\Big)^{1/g}\Big], \; r\in [2,r_0]. 
\end{equation}
  
When $r\in[0,2]$, we have  the elementary inequalities that
$$ \|\bm u+\bm v\|^{r }-\|\bm u\|^{r }\le r  \langle\bm u,\bm v\rangle\|\bm u\|^{r  -2}+\frac{r}{2}\|\bm v\|^2\|\bm u\|^{r-2},
$$
and
$\|\bm u+\bm v\|^{r }-\|\bm u\|^{r }\le 2\|\bm v\|^r+\|\bm u\|^r.$
We use the first inequality when $\|\bm u\|\ge 1$ and the second inequality when $\|\bm u\|\le 1$. It follows that
\begin{align*}
 \|\bm u+\bm v\|^{r }-\|\bm u\|^{r }\le & r  \langle\bm u,\bm v\rangle\|\bm u\|^{r  -2}I\{\|\bm u\|\ge 1\}+\frac{r}{2}\|\bm v\|^2 +2\|\bm v\|^r+1\\
\le &r  \langle\bm u,\bm v\rangle\|\bm u\|^{r  -2}+ \|\bm v\|^2+2\|\bm v\|^r+2\|\bm v\|+1.
\end{align*}
Similar to \eqref{eq:multi-drift-p-2}, we have
 \begin{equation} \label{eq:multi-drift-p-2ad}
 \Ex\left[\|\bm\Lambda_{n+1}\|^{r }|\mathscr F_n\right]-\|\bm\Lambda_n\|^{r }\le - r\lambda_1\lambda_2 \|\bm\Lambda_n\|^{r-1 }\left(  \frac{ \|\bm \Lambda_{n}\|}{n^{\gamma}}\wedge 1\right)+C_r,\; r\in[1,2]. 
\end{equation}
Thus, 
$$
 \Ex\left[\|\bm\Lambda_{n+1}\|^{r }|\right]-\Ex\left[\|\bm\Lambda_n\|^{r }\right] 
  \le   - r\lambda_1\lambda_2  \Ex\Big[\|\bm\Lambda_n\|^{r-1 }\Big(  \frac{ \|\bm \Lambda_{n}\|}{n^{\gamma}}\wedge 1\Big)\Big]
  +C_{r},  \; r\in[1,2]. 
$$
   With this inequality taking the place of \eqref{eq:multi-drift-p-3}, we can show that
\begin{equation}\label{eq:proof:thm:10ad}
  \Ex\left[\|\bm \Lambda_n\|^{r}\right]
  \le  C_r\Big[n^{ \gamma } + \Big(\max_{m\le n} \Ex[\|\bm \Lambda_m\|^{r-1+g}]\Big)^{1/g}\Big], \; r\in [1,2]. 
\end{equation}

{\bf Proof of Theorem \ref{thm:Property1}}. 
(i) Let $r-1+g=r_0$. Notice \eqref{eq:proof:thm:9}. We obtain  
\begin{align*}
\left(\Ex\left[\|\bm \Lambda_n\|^r\right]\right)^{1/r} \le & C_{r,r_0}\left(n^{\frac{\gamma}{2} r} +n^{\frac{\gamma(r-2)}{2 f}+\frac{(r_0-2)\gamma +2}{2g}}\right)^{1/r}\\
\le &  C_{r,r_0}\left(n^{\frac{\gamma}{2} } +n^{\frac{\gamma}{2}-  \frac{(r_0+2-r)\gamma-2}{2r(r_0+1-r)}}\right), \; r\in[2,r_0],
\end{align*}
by \eqref{eq:proof:thm:10}, and 
\begin{align*}
\left(\Ex\left[\|\bm \Lambda_n\|^r\right]\right)^{1/r} \le & C_{r,r_0}\left(n^{\gamma} +n^{ \frac{(r_0-2)\gamma +2}{2g}}\right)^{1/r} 
\le    C_{r,r_0}\left(n^{\frac{\gamma}{r} } +n^{ \frac{(r_0-2)\gamma +2}{2r(r_0+1-r)}}\right), \; r\in[1,2],
\end{align*}
by \eqref{eq:proof:thm:10ad}.
 Thus, \eqref{eq:thm:Property1:1} and \eqref{eq:thm:Property1:2} are proven. Taking $r=2$ and $r=r_0$ in \eqref{eq:thm:Property1:1} respectively  yields \eqref{eq:thm:Property1:3}.

(ii) If $\gamma> \frac{2}{3r_0-2}$, then 
$$ \Ex\left[\|\bm \Lambda_n\|\right]\le \Big(\Ex \|\bm \Lambda_n\|^2\Big)^{1/2}=O\left(n^{\gamma/2}+n^{  \frac{(r_0-2)\gamma +2}{4(r_0-1)}}\right)=o(n^{\gamma}), $$
by \eqref{eq:thm:Property1:3}. If 
$\gamma>\frac{4}{r_0^2+5}$  and  $2<r_0\le 3$, then by taking $r=(r_0+1)/2$ in \eqref{eq:thm:Property1:2}, we have
$$ \Ex\left[\|\bm \Lambda_n\|\right]\le \Big(\Ex \|\bm \Lambda_n\|^r\Big)^{1/r}
=O\Big(n^{\frac{\gamma}{r}}+n^{\gamma+\frac{4-\gamma (r_0^2+5)}{(r_0+1)^2}}\Big)=o(n^{\gamma}). $$ 
If $\gamma>\frac{8}{(r_0+2)^2+4}$  and  $r_0\ge 4$, then by taking $r=r_0/2$ in \eqref{eq:thm:Property1:1}, we have
$$ \Ex\left[\|\bm \Lambda_n\|\right]\le \Big(\Ex \|\bm \Lambda_n\|^r\Big)^{1/r}
=O\Big(n^{\frac{\gamma}{2}}+n^{\gamma+\frac{2-\gamma ((r_0+2)^2+4)/4}{r_0(r_0/2+1)}}\Big)=o(n^{\gamma}). $$ 
Notice that $|\ell (x)-\rho|\le c|x|$ and $|\ell (x)-\rho|\le 1$. Thus,
\begin{align*}
  \Ex|\ell_n-\rho|
 \le &  c \Ex\left[\frac{\|\bm \Lambda_{n-1}\|\cdot \|\bm \phi_n\|}{(n-1)^{\gamma}}\right] 
 =   c \frac{\Ex\left[\|\bm \Lambda_{n-1}\|\right]\Ex[\|\bm \phi\|]}{(n-1)^{\gamma}} 
 \to   0.
 \end{align*}
 \eqref{eq:thm:Property1:7} is proven. Then
 \begin{equation} \label{eq:proof:th1:ell} \Ex|(\ell_i-\rho)Z_i|\le M\Ex|\ell_i-\rho|+\Ex[|Z|I\{|Z|\ge M\}] \to 0
 \end{equation}
  as  $i\to \infty$   and then $M\to \infty$.
 So, \eqref{eq:thm:Property1:8} holds.

(iii) For \eqref{eq:thm:Property1:9}, we denote $\Delta M_{i,\widetilde{Z}}=:(T_i-\rho)\widetilde{Z}_i-\Ex[(\ell_i-\rho)\widetilde{Z}_i|\bm \Lambda_{i-1}]$. Suppose $r_0\ge 4$ and $\Ex [Z^2]<\infty$. Then
\begin{align}\label{eq:proof:thm:expansion0}
&\sum_{i=1}^n(T_i-\rho)Z_i= \sum_{i=1}^n (T_i-\rho)\widetilde{Z}_i+\langle \bm \Lambda_n, \bm x_0\rangle\nonumber \\
=& \sum_{i=1}^n \Delta M_{i,\widetilde{Z}}+\langle \bm \Lambda_n, \bm x_0\rangle+\sum_{i=2}^{n-1}\Ex[(\ell_{i+1}-\rho)\widetilde{Z}_{i+1}|\bm \Lambda_{i}]. 
\end{align}
 We notice that
 $$ |\ell(x)-\rho-\ell^{\prime}(0)x|\le cx^2. $$
 Write $h_{\widetilde{Z}}(\bm x)=\left\langle \bm x,\Ex[\bm \phi \widetilde{Z}]\right\rangle$. Then $h_{\widetilde{Z}}(\bm x)\equiv 0$ since
 $\Ex[\bm \phi \widetilde{Z}]\equiv \bm 0$ due to the fact that $\langle\bm \phi,\bm x_0\rangle$ is the orthogonal projection of $Z$ on $\bm \phi$.  Thus,
 \begin{align}\label{eq:expansion}
  & \left|\Ex\left[(\ell_{n+1}-\rho)\widetilde{Z}_{n+1}\big|\bm \Lambda_{n}\right]\right|\nonumber\\
   =&  \left|\Ex\left[(\ell_{n+1}-\rho)\widetilde{Z}_{n+1}\big|\bm \Lambda_{n}\right]-\ell^{\prime}(0) h_{\widetilde{Z}}\Big(\frac{\bm \Lambda_n}{n^{\gamma}}\Big)\right|  
  \le    C  \Ex\left[ \frac{\|\bm \Lambda_n\|^2 \|\bm\phi\|^2|\widetilde{Z}|}{n^{2\gamma}}\big|\bm\Lambda_n\right]   \nonumber \\
  \le & 
   C\frac{\|\bm \Lambda_n\|^2}{n^{2\gamma}}\Big(\Ex \widetilde{Z}^{2}\Big)^{1/2}\Big(\Ex \|\bm\phi\|^{4}\Big)^{1/2}  
  \le  C_0\big(\Ex Z^2\big)^{1/2} \frac{\|\bm \Lambda_n\|^2}{n^{2\gamma}}.
 \end{align}
 Also, since $\bm x_0\Sigma\bm x_0^{\prime}=\Ex(\Proj[Z|\bm \phi])^2\le \Ex Z^2$, we have $\|\bm x_0\|\le \lambda_{min}^{-1/2}(\Ex Z^2)^{1/2}$. 
 Hence, we have
 \begin{align}\label{eq:martingale:approx}
 \sum_{i=1}^n(T_i-\rho)Z_i= \sum_{i=1}^n \Delta M_{i,\widetilde{Z}}+\theta_n(\Ex Z^2)^{1/2}\left(\| \bm \Lambda_n\| +
\sum_{i=2}^{n-1}\frac{\|\bm \Lambda_i\|^2}{i^{2\gamma}}\right), \;\; |\theta_n|\le C_0,
\end{align}
where $C_0$  does not depend on $Z$.

From \eqref{eq:thm:Property1:3}, it follows that  
\begin{align*}
\sum_{i=1}^n(T_i-\rho)Z_i=& \sum_{i=1}^n \Delta M_{i,\widetilde{Z}}+O\Big(n^{\frac{\gamma}{2}}
+n^{\frac{(r_0-2)\gamma +2}{4(r_0-1)}}+n^{1-\gamma}+n^{1+\frac{(r_0-2)\gamma +2}{2(r_0-1)}-2\gamma}\Big) \\
=&  \sum_{i=1}^n \Delta M_{i,\widetilde{Z}}+O\Big(n^{\frac{\gamma}{2}}
+n^{1-\gamma}\Big) \text{ in } L_1 
\end{align*}
by the fact  that   $ \frac{(r_0-2)\gamma +2}{2(r_0-1)}\le  \gamma$ when $\gamma\ge 2/r_0$. \eqref{eq:thm:Property1:9} is proven.
 
(iv) For \eqref{eq:thm:Property1:10}, notice that $\{(\Delta  M_{i,\tilde{Z}}, W_i), \mathscr{F}_i; i=1,2,\ldots\}$  is sequence of martingale differences. We have
  $|\Delta  M_{i,\tilde{Z}}|\le |\widetilde{Z}_i|+\Ex|\widetilde{Z}_i|$,
\begin{align*}
&\frac{1}{n}\sum_{i=1}^n \Ex\left[(\Delta  M_{i,\tilde{Z}})^2I\{\big|(\Delta  M_{i,\tilde{Z}})^2\ge \epsilon n\}\right]\\
\le &\frac{1}{n}\sum_{i=1}^n \Ex\left[(|\widetilde{Z}_i|+\Ex|\widetilde{Z}|)^2I\{\big|(|\widetilde{Z}_i|+\Ex|\widetilde{Z}|)^2\ge \epsilon n\}\right]\\
=& \Ex\left[(|\widetilde{Z}|+\Ex|\widetilde{Z}|)^2I\{\big|(|\widetilde{Z}|+\Ex|\widetilde{Z}|)^2\ge \epsilon n\}\right]\to 0,
\end{align*}
$$
 \frac{1}{n}\sum_{i=1}^n \Ex\left[W_i^2I\{ W_{i}^2\ge \epsilon n\}\right]
= \Ex\left[W^2I\{ W^2\ge \epsilon n\}\right]\to 0,
$$
i.e., the Linderberg condition is satisfied. Further, 
 $\Ex[W_i^2|\mathscr{F}_{i-1}]=\Ex W^2$,
\begin{align*}
\Ex\left[(\Delta M_{i,\tilde{Z}})^2\big|\mathscr{F}_{i-1}\right]=&
\Ex[((1-2\rho)\ell_i+\rho^2)\tilde{Z}_i^2|\bm \Lambda_{i-1}]-\big(\Ex[(\ell_i-\rho)\tilde{Z}_i|\bm \Lambda_{i-1}]\big)^2\\
\to & (1-\rho)\rho\Ex[\tilde{Z}_i^2] \text{ in } L_1, \\
\Ex\left[\Delta M_{i,\tilde{Z}}W_i\big|\mathscr{F}_{i-1}\right]=&
\Ex[(\ell_i-\rho)\tilde{Z}_iW_i|\bm \Lambda_{i-1}]
\to  0  \text{ in } L_1,
\end{align*}
by  applying \eqref{eq:proof:th1:ell} to $\tilde{Z}_i^2$ and $\tilde{Z}_iW_i$.

It follows that
\begin{equation}
\label{eq:clt:martingale}
\begin{aligned} &\frac{\Ex\big( M_{n,\widetilde{Z}}, \sum_{i=1}^n W_i\big)^{\otimes 2}}{n}\to diag(\vec{\sigma}^2, \Var(W)) \text{ and }  \\ &\left(\frac{M_{n,\widetilde{Z}}}{\sqrt{n}},\frac{\sum_{i=1}^n W_i}{\sqrt{n}}\right)\overset{d}\to N(0,\vec{\sigma}^2)\times N(0,\Var(W)), 
\end{aligned}
\end{equation}
by the central limit theorem for martingales\cite[Corollary 3.1]{Hall1980}. 
Hence, \eqref{eq:thm:Property1:10} holds by \eqref{eq:thm:Property1:9} and the fact that $1/2<\gamma<1$. 

When $(r_0-2)\gamma\ge 2$, applying \eqref{eq:thm:Property1:1} with $r=4$ yields that
$$\|\bm\Lambda_n\|=O\big(n^{\gamma/2}\big) \text{ in } L_4. $$
Thus, \eqref{eq:thm:Property1:9} holds in $L_2$ by \eqref{eq:martingale:approx}. Then, \eqref{eq:thm:Property1:11} holds by \eqref{eq:clt:martingale}.
$\Box$

\bigskip
{\bf Proof of Remark \ref{rk:2}}. Notice 
$ \big|\ell(x)-\rho -\ell^{\prime}(0)x-\frac{1}{2}\ell^{\prime\prime}(0)x^2\big|\le c|x|^3. $
Similar to \eqref{eq:expansion}, we have
\begin{align*} 
  &\left|\Ex\left[(\ell_{n+1}-\rho)\widetilde{Z}_{n+1}\big|\bm \Lambda_{n}\right]-\ell^{\prime}(0) h_{\widetilde{Z}}\Big(\frac{\bm \Lambda_n}{n^{\gamma}}\Big)-\frac{1}{2}\ell^{\prime\prime}(0)\frac{\bm \Lambda_n}{n^{\gamma}}\Ex\left[\bm\phi^{\prime}\bm\phi \widetilde{Z}\right]\frac{\bm \Lambda_n^{\prime}}{n^{\gamma}}\right|\nonumber \\
  \le & C  \Ex\left[ \frac{\|\bm \Lambda_n\|^3 \|\bm\phi\|^3| \widetilde{Z}|}{n^{2\gamma}}\big|\bm\Lambda_n\right]  
   \le  C    \frac{\|\bm \Lambda_n\|^3 }{n^{2\gamma}}\big( \Ex\|\bm\phi\|^6\big)^{1/2} \big(\Ex\widetilde{Z}^2\big)^{1/2} 
   \le   C_0(\Ex Z^2)^{1/2}\frac{\|\bm \Lambda_n\|^3}{n^{3\gamma}}.
 \end{align*}
Notice that $h_{\widetilde{Z}}\big(\bm x\big)\equiv \bm 0$ and $\ell^{\prime\prime}(0)\Ex[\bm\phi^{\prime}\bm\phi \widetilde{Z}]=\bm 0$.  It follows that
 \begin{align*}  
   \sum_{i=1}^n   & (T_i-   \rho)Z_i=   \sum_{i=1}^n \Delta M_{i,\widetilde{Z}} 
  +\theta_n (\Ex Z^2)^{1/2}\left(\|\bm \Lambda_n\|+
\sum_{i=2}^{n-1}\frac{\|\bm \Lambda_i\|^3}{i^{3\gamma}}\right) \;  (|\theta_n|\le C_0) \\
=&  \sum_{i=1}^n \Delta M_{i,\widetilde{Z}}+O\Big(n^{\gamma/2}+ n^{ \frac{\gamma}{2}-\frac{(r_0-1)\gamma-2}{2(r_0-2)}}+n^{1-3\gamma/2}
+n^{1-\frac{3\gamma}{2}-\frac{(r_0-1)\gamma-2}{6(r_0-2)}}\Big) \text{ in } L_1\\
=&  \sum_{i=1}^n \Delta M_{i,\widetilde{Z}}+o(n^{1/2}) \text{ in } L_1,
\end{align*}
by \eqref{eq:thm:Property1:1} (with $r=3$) and the facts that $\gamma/2<1$, $1-\frac{3\gamma}{2}<1/2$ and $ (r_0-1)\gamma-2\ge 0$ when $1/3<\gamma<1$ and $r_0\ge 7$,  where $C_0$ does not depend on $Z$. $\Box$. 

\bigskip

{\bf Proof of Theorem \ref{thm:Property2}}. \eqref{eq:thm:Property2:1} follows from \eqref{eq:thm:Property1:1} by letting $r=r_0/2$ and noticing $(r_0+2-r)\gamma-2\ge 0$. 

 Notice $\Phi(u_{\rho/2})<\rho<\Phi(u_{(\rho+1)/2})$. Let $K\ge 2$. By the continuity, there exists $\delta>0$ such that$$\ell(x)=\rho-\lambda x, \;\; |x|\le \delta. $$
When $|x|\ge \delta$, we have $|\ell(x)-\rho|\le 1\le \delta^{-K}|x|^K$, $|x|\le \delta^{-K+1}|x|^K$. So, there exists a constant $C_K$ such that
$$|\ell(x)-\rho+\lambda x|\le c_K|x|^K. $$
Similar to \eqref{eq:expansion}, we have
\begin{align*}  
  &\left|\Ex\left[(\ell_{n+1}-\rho)\widetilde{Z}_{n+1}\big|\bm \Lambda_{n}\right]  \right|=\left|\Ex\left[(\ell_{n+1}-\rho)\widetilde{Z}_{n+1}\big|\bm \Lambda_{n}\right]+\lambda h_{\widetilde{Z}}\Big(\frac{\bm \Lambda_n}{n^{\gamma}}\Big) \right|\nonumber \\
 & \;\; \le  c_K  \Ex\left[ \frac{\|\bm \Lambda_n\|^K \|\bm\phi\|^K|\widetilde{Z}|}{n^{2\gamma}}\big|\bm\Lambda_n\right]   \le  c_K(\Ex Z^2)^{1/2}(\Ex \|\bm \phi\|^{2K})^{1/2}\frac{\|\bm \Lambda_n\|^K}{n^{K\gamma}}.
 \end{align*}
 It follows that
 \begin{align}   \label{eq:proof:thm:2:2}
   &\sum_{i=1}^n    (T_i-   \rho)Z_i -M_{n,\widetilde{Z}}\nonumber \\
  =&\theta_n (\Ex Z^2)^{1/2}\left(\|\bm \Lambda_n\|+ (\Ex \|\bm \phi\|^{2K})^{1/2}
\sum_{i=2}^{n-1}\frac{\|\bm \Lambda_i\|^K}{i^{K\gamma}}\right) \;  (|\theta_n|\le \lambda_{min}^{-1/2}\vee c_K).
\end{align}
 Now, let $K=r_0/2$. By \eqref{eq:thm:Property1:3}, $\|\bm\Lambda_n\|=o(n^{1/2})$ in $L_2$, and
 $$ \frac{\|\bm \Lambda_n\|^K}{n^{K\gamma}}=O\left(\frac{n^{r_0\gamma/4+(1-\gamma)/2}}{n^{r_0\gamma/2}}\right) 
 = O\left( n^{\frac{1}{2} -\frac{(r_0+2)\gamma}{4}}\right)=o(n^{-1/2}) \text{ in } L_2. $$
 Thus, by \eqref{eq:proof:thm:2:2}, 
 $$ \sum_{i=1}^n (T_i-\rho)Z_i=M_{n,\widetilde{Z}}+o(n^{1/2}) \text{ in } L_2, 
 $$ 
 which, together with \eqref{eq:clt:martingale}, implies  \eqref{eq:thm:Property1:10} and \eqref{eq:thm:Property1:11}. $\Box$
 
\bigskip
{\bf Proof of Corollary \ref{thm:Property3}}.   \eqref{eq:thm:Property3:1}, \eqref{eq:thm:Property1:10} and \eqref{eq:thm:Property1:11} hold by Theorem \ref{thm:Property2}. 
 For \eqref{eq:thm:Property3:2}, we notice \eqref{eq:proof:thm:2:2}. 
 Choose $K>2$ such that $K\gamma/2>1$. By \eqref{eq:thm:Property3:1},
 $$\sum_{i=1}^{n-1}\frac{\|\bm \Lambda_i\|^K}{i^{K\gamma}}\le C\sum_{i=1}^{\infty} i^{-K\gamma/2}<\infty\;\; \text{ in } L_r. $$
 The proof of \eqref{eq:thm:Property3:2} is completed by \eqref{eq:proof:thm:2:2} and \eqref{eq:thm:Property3:1}. 
 $\Box$

\bigskip
{\bf Proof of Theorem \ref{thm:Property4}.} For (i), notice that \eqref{eq:multi-drift-p-2} and \eqref{eq:multi-drift-p-2ad} still hold for $\gamma=0$. Thus, \eqref{eq:thm:Property1:1} and \eqref{eq:thm:Property1:2} still hold for $\gamma=0$.
Letting $r=2$ and $g=r_0-1$ in \eqref{eq:thm:Property1:1} yields $\Ex\left[\|\bm \Lambda_n\|^{2}\right]=O(n^{\frac{1}{r_0-1}})$.  When $r_0\ge 3$. Let $r=(1+r_0)/2$  in \eqref{eq:thm:Property1:1}. Then $r\ge 2$, $g=r$. Thus, $\Ex\left[\|\bm \Lambda_n\|^{r}\right]=O(n^{1/r})$, and so $\Ex\left[\|\bm \Lambda_n\|^{2}\right]=O(n^{2/r^2})=O(n^{\frac{8}{(r_0+1)^2}})$. The proof of (i) is completed.  
  
  For (ii), suppose $\Ex[Z|\bm \phi]=\langle\bm \phi, \bm z_0\rangle\in Span\{\bm \phi\}$. 
Let $\widetilde{Z}_i=Z_i-\Ex[Z_i|\bm \phi_i]$. Then 
$$ \sum_{i=1}^n (T_i-\rho)Z_i=\sum_{i=1}^n(T_i-\rho)\widetilde{Z}_i + \langle\bm \Lambda_n, \bm z_0\rangle, $$
and
$\{(T_i-\rho)\widetilde{Z}_i,\mathscr{F}_i; i=1,2,\ldots\}$ is a sequence of martingale differences with $|(T_i-\rho)
\widetilde{Z}_i|\le |Z_i|+\|\bm z_0\|\cdot \|\bm \phi_i\|$. Thus, the martingale differences are uniformly integrable which implies 
$$ \frac{\sum_{i=1}^n(T_i-\rho)\widetilde{Z}_i}{n}\to 0 \text{ in } L_1  $$
 \citep[Theorem 2.22]{Hall1980}. \eqref{eq:thm:Property1:8} is proved by (i). 

Suppose $\Ex Z^2<\infty$. In this case $\langle \bm \phi, \bm z_0\rangle =\Proj[Z|\bm \phi]$ and thus  $|\langle \bm \Lambda_n, \bm z_0\rangle|\le C_0(\Ex Z^2)^{1/2}\|\bm \Lambda_n\|$ as we have shown. For the martingale $\sum_{i=1}^n(T_i-\rho)\widetilde{Z}_i$,
we have
$$ \Ex\big(\sum_{i=1}^n(T_i-\rho)\widetilde{Z}_i\big)^2\le n\Ex \widetilde{Z}^2\le n\Ex Z^2. $$
(ii) is proven.

For (iii), by the central limit theorem\cite[Theorem 3.2]{Hall1980}, it is sufficient to show that
\begin{equation}\label{eq:proof:thm4:1}\frac{1}{n}\sum_{i=1}^n (T_i-\rho)^2\widetilde{Z}_i^2  \to \rho(1-\rho)\Ex\widetilde{Z}^2 \;\; \text{ in } L_1. 
\end{equation} 
Notice
$$  (T_i-\rho)^2\widetilde{Z}_i^2 =\rho(1-\rho)\Ex\widetilde{Z}^2+\rho(1-\rho)(\widetilde{Z}_i^2- \Ex\widetilde{Z}^2)+(1-2\rho) (T_i-\rho)\widetilde{Z}_i^2. $$
It is obvious that
$$\frac{\sum_{i=1}^n (\widetilde{Z}_i^2- \Ex\widetilde{Z}^2)}{n}\to 0 \text{ in } L_1. $$
When $\rho\ne 1/2$,  we have
$$ \frac{\sum_{i=1}^n (T_i-\rho)\widetilde{Z}_i^2}{n}\to 0 \text{ in } L_1, $$
by (ii) and noticing the assumption that $\Ex[\widetilde{Z}^2|\bm \phi]\in Span\{\bm \phi\}$ . Hence, \eqref{eq:proof:thm4:1} holds. 

When $(Z-\Ex[Z|\bm\phi])W$ is also in $Span\{\bm \phi\}$. Then
$$\frac{ \sum_{i=1}^n (T_i-\rho)\widetilde{Z}_iW_i}{n} \to 0 \text{ in } L_1 $$
by (ii) again. Thus,
$$\sum_{i=1}^n \big((T_i-\rho)\widetilde{Z}_i,W_i\big)^{\otimes 2}\to diag\big(\vec{\sigma}_Z^2, \Ex W^2\big) \text{ in } L_1. $$
Applying the central limit theorem to martingale differences $\Big\{\big((T_i-\rho)\widetilde{Z}_i, W_i\big),\mathscr{F}_i;i=1,2,\ldots\Big\}$ yields \eqref{eq:thm:Property1:10}.
The proof is completed. $\Box$

\subsection{The properties of the tests}

{\bf Proof of Theorem \ref{thm:test}}. \eqref{eq:const} is trivial by applying \eqref{eq:thm:Property1:8ad}. For \eqref{eq:esttau}, we let $e_t=Y(t)-\mu_t$, $\tilde{e}_{t}=e_{t}-\Proj[e_{t}|\bm \phi]$,  $e_{i,t}=Y_i(t)-\mu_t$,  $D_{n,t}=\sum_{i=1}^n (T_i-\rho)e_{i,t}$,    $t=1,2$, $D_n=\sum_{i=1}^n (T_i-\rho)$. Then
$ D_n=O_p(n^{1/2})$, $D_{n,t}=O_p(n^{1/2})$, $\sum_{i=1}^n e_{i,t}=O_P(n^{1/2})$,$t=1,2$, by \eqref{eq:thm:Property1:10}. Thus, 
\begin{align*}
\widehat{\tau}_n=:&\overline{Y}_{n,1}-\overline{Y}_{n,2}=\tau+\frac{\sum_{i=1}^n e_{i,1}+\frac{D{_{n,1}}}{\rho}}{n\big(1+\frac{D_n}{n\rho}\big)}-\frac{\sum_{i=1}^n e_{i,2}-\frac{D{_{n,2}}}{1-\rho}}{n\big(1-\frac{D_n}{n(1-\rho)}\big)}\\
=&\tau+\frac{\frac{D{_{n,1}}}{\rho}+\frac{D{_{n,2}}}{1-\rho}+\sum_{i=1}^n (e_{i,1}-e_{i,2})}{n}+o_P(n^{-1/2})\\
=&\tau+\frac{\sum_{i=1}^n(T_i-\rho) \big(\frac{e{_{i,1}}}{\rho}+\frac{e{_{i,2}}}{1-\rho}\big)+\sum_{i=1}^n (e_{i,1}-e_{i,2})}{n}+o_P(n^{-1/2}).
\end{align*}
By \eqref{eq:thm:Property1:10}, it follows that
$$\sqrt{n}(\widehat{\tau}_n-\tau)\overset{d}\to N(0,\sigma_{\tau}^2) $$
with
\begin{align*}
\sigma_{\tau}^2=&\rho(1-\rho)\Ex\Big(\frac{\tilde{e}_1}{\rho}+\frac{\tilde{e}_2}{1-\rho}\Big)^2+\Ex(e_1-e_2)^2\\
=& \rho(1-\rho)\Ex\Big(\frac{e_1}{\rho}+\frac{e_2}{1-\rho}\Big)^2+\Ex(e_1-e_2)^2
-\rho(1-\rho)\Ex\Big(\Proj[\frac{e_1}{\rho}+\frac{e_2}{1-\rho}|\bm \phi]\Big)^2\\
=&  \frac{\Ex e_1^2}{\rho}+\frac{\Ex e_2^2}{1-\rho}  
-\rho(1-\rho)\Ex\Big(\Proj[\frac{e_1}{\rho}+\frac{e_2}{1-\rho}|\bm \phi]\Big)^2.
\end{align*}
Also,
\begin{align*}
\sigma_{\tau}^2=&\rho(1-\rho)\Ex\Big(\frac{\tilde{e}_1}{\rho}
+\frac{\tilde{e}_2}{1-\rho}\Big)^2+\Ex(\tilde{e}_1-\tilde{e}_2)^2
+\Ex\big(\Proj[e_1-e_2|\bm \phi]\big)^2 \\
=&\frac{\Ex\tilde{e}_1^2}{\rho}
+\frac{\Ex\tilde{e}_2^2}{1-\rho}+\Ex(\Proj[e_1-e_2|\bm \phi])^2.
\end{align*}
\eqref{eq:esttau} is proven. For \eqref{eq:test1}, it is sufficient to notice that
$$\frac{n\widehat{\sigma}_{n,1}^2}{N_{n,1}}+\frac{n\widehat{\sigma}_{n,2}^2}{N_{n,2}}
=\frac{\widehat{\sigma}_{n,1}^2}{1+\frac{D_n}{n\rho}}+\frac{\widehat{\sigma}_{n,2}^2}{1-\frac{D_n}{n(1-\rho)}}\overset{P}\to \frac{\sigma_1^2}{\rho}+\frac{\sigma_2^2}{1-\rho}. \;\;\Box$$

\bigskip

{\bf Proof of Theorem \ref{thm:regest}}.  For simplifying the proof, without loss of generality we assume that $ \Sigma=\Ex[\bm\phi^{\otimes 2}]$ is non-singular. Let $\bm \alpha_t=\Ex[\bm\phi e_t]\Sigma^{-1}$. Then $\bm\phi(\bm X)\bm \alpha_t^{\prime}=\Proj[e_t|\bm \phi]$.
By noting $\overline{Y}_{n,t}- \mu_t\overset{P}\to 0$, we have
$\widehat{\zeta}_i=Y_i-\underline{\bm X}_i\widehat{\theta}^{\prime}-\bm\phi(\bm X_i)\widehat{\bm \alpha}^{\prime}$ with
\begin{align*}
&\widehat{\bm \alpha}_{n,1}= \sum_{i=1}^n T_i\bm\phi_i(Y_i(1)-\overline{Y}_{n,1}) \left[\sum_{i=1}^nT_i\bm\phi_i^{\otimes 2}\right]^{-1}\\
=&\frac{1}{n\rho}\sum_{i=1}^n T_i\bm\phi_i(Y_i(1)-\mu_1)\Sigma^{-1}+o_P(1) 
  \overset{P}\to  \Ex\left[\bm \phi e_1\right]\Sigma^{-1}=\bm \alpha_1,
\end{align*}
by \eqref{eq:thm:Property1:8}. Thus
\begin{align*}
& \frac{1}{\sum_{i=1}^n T_i(1)-1}  \sum_{i=1}^n \big(\widehat{\zeta}_{i,1}\big)^2
=\frac{1}{N_{n,1}-1}  \sum_{i=1}^n T_i\big(Y_i(1)-\overline{Y}_{n,1}- \bm\phi_i\widehat{\bm \alpha}_{n,1}^{\prime}\big)^2\\
=&
\frac{1}{n\rho}  \sum_{i=1}^n T_i\big(e_{i,1}- \bm\phi_i \bm \alpha_1^{\prime}\big)^2+o_P(1) \\
 &\overset{P}\to    \Ex\big(e_1- \bm\phi \bm \alpha_1^{\prime}\big)^2  
 =\Ex\Big(Y(1)-\mu_1-\Proj\big[ Y(1)-\mu_1\big|\bm \phi\big]\Big)^2,
\end{align*}
by \eqref{eq:thm:Property1:8} again. 

Similarly, $\widehat{\bm \alpha}_{n,2} \overset{P}\to \bm \alpha_2$,
$$   \frac{1}{\sum_{i=1}^n T_i(2)-1}  \sum_{i=1}^n \big(\widehat{\zeta}_{i,2}\big)^2 \overset{P}\to \Ex\Big(Y(2)-\mu_2-\Proj\big[ Y(2)-\mu_2\big|\bm \phi\big]\Big)^2, $$
and
\begin{align*}
 &\frac{1}{n-2}\sum_{i=1}^n  \big(\bm \phi_i[\widehat{\bm \alpha}_{n,1}-\widehat{\bm \alpha}_{n,2}]^{\prime}\big)^2
= \frac{1}{n}\sum_{i=1}^n  \big(\bm \phi_i[ \bm \alpha_{1}- \bm \alpha_{2}]^{\prime}\big)^2+o_P(1)\\
& \overset{P}\to  \Ex\big(\bm \phi[ \bm \alpha_{1}- \bm \alpha_{2}]^{\prime}\big)^2=\Ex\Big(\Proj\big[  Y(1)-\mu_1-(Y(2)-\mu_2)\big|\bm \phi(\bm X)\big]\Big)^2.
\end{align*}
The proof is completed. $\Box$

{\bf Proof of Theorem \ref{thm:mb}}. Let $\Delta M_{n,t}=(T_n-\rho)\widetilde{e}_{n,t}- \Ex[(\ell_n-\rho)\widetilde{e}_{n,t}|\bm\Lambda_{n-1}]$, $t=1,2$. By \eqref{eq:thm:Property3:2},
$$ \sum_{i=1}^n T_ie_{i,1}
=\sum_{i=1}^n \Delta M_{i,1}+\rho \sum_{i=1}^n e_{i,1}+O(n^{\gamma/2}) \text{ in } L_r$$
for all $r>0$. It follows that
$$ \sum_{i=1}^n T_ie_{i,1}
=\sum_{i=1}^n \Delta M_{i,1}+\rho \sum_{i=1}^n e_{i,1}+o(n^{\gamma/2+\epsilon}) \; a.s. \text{ for all } \epsilon>0. $$
Thus,
$$\sum_{i=j+1}^{j+l} T_ie_{i,1}
=\sum_{i=j+1}^{j+l} \Delta M_{i,1}+\rho \sum_{i=j+1}^{j+l} e_{i,1}+o(n^{\gamma/2+\epsilon_0})\;
\text{ uniformly in } j=0,1,\ldots,n-l. $$
Since $1\in Span\{\bm \Phi(\bm X)\}$, $\widetilde{1}=1-Pj(1|\bm\phi)=0$, and so
$$\sum_{i=j+1}^{j+l}T_i=\rho l+o(n^{\gamma/2+\epsilon_0})\;
\text{ uniformly in } j=0,1,\ldots,n-l. $$
Write
 $Z_1(j,l)=\sum_{i=j+1}^{j+l} \Delta M_{i,1}/\rho+ \sum_{i=j+1}^{j+l} e_{i,1}$. Then $Z_1(j)$ is a summation of $l$ martingale differences, and
$$\overline{Y}_{1}(j,l)-\mu_1=\frac{Z_1(j,l)}{ l} \left(1+ \frac{o(n^{\gamma/2+\epsilon_0})}{l}\right)=
\frac{Z_1(j,l)}{ l} (1+o(1)) \; a.s.  $$
uniformly in $j=0,1,\ldots,n-l$, by the assumption $n^{\gamma/2+\epsilon_0}/l\to 0$. Similarly,
$$\overline{Y}_{2}(j,l)-\mu_2= \frac{Z_2(j,l)}{ l} (1+o(1)) \; a.s.  \text{ uniformly in } j=0,1,\ldots,n-l, $$
where
$Z_2(j,l)=-\sum_{i=j+1}^{j+l} \Delta M_{i,2}/(1-\rho)+ \sum_{i=j+1}^{j+l} e_{i,2}$.
It is easily seen that $\Ex (Z_t(j,l))^2\le C l$, $t=1,2$.  Thus,
$$\frac{1}{n-l}\sum_{j=0}^{n-l}(Z_t(j,l))^2=O_P(l), \;\; \frac{1}{n-l}\sum_{j=0}^{n-l}(\overline{Y}_{t}(j,l)-\mu_t)^2=O_P(l), \; t=1,2 
$$
and 
\begin{align*}
&\frac{1}{n-l+1}\sum_{j=0}^{n-l}(\overline{Y}_{1}(j,l)-\mu_1)
=\frac{1}{(n-l+1)l}\sum_{j=0}^{n-l}Z_1(j,l)+\frac{1}{\sqrt{l}}o(1)\\
=& \frac{1}{n-l+1}Z_1(0,n)+o(l^{-1/2}) 
=O(n^{-1/2})+o(l^{-1/2}) =o(l^{-1/2}) 
\end{align*}
in probability, by   the assumption $l/n\to 0$. Similarly, 
$$ \frac{1}{n-l+1}\sum_{j=0}^{n-l}(\overline{Y}_{2}(j,l)-\mu_1)=o_P(l^{-1/2}). $$ 
It follows that
\begin{align*}
   SV_n  
 =& \frac{1}{n-l}\sum_{j=0}^{n-l}\Big( \overline{Y}_{1}(j,l)-\mu_1-(\overline{Y}_{2}(j,l)-\mu_2)\Big)^2\\
&  -\frac{n-l+1}{n-l}\Big(\frac{1}{n-l+1}\sum_{j=0}^{n-l}
[\overline{Y}_{1}(j,l)-\mu_1-(\overline{Y}_{2}(j,l)-\mu_2)]
 \Big)^2\\
 =& \frac{1}{(n-l)l^2}\sum_{j=0}^{n-l} (Z_1(j,l)-Z_2(j,l))^2\\
 & +o_{a.s.}(1)\cdot \frac{1}{(n-l)l^2}\sum_{j=0}^{n-l} [(Z_1(j,l))^2+(Z_1(j,l))^2]+o_P(l^{-1}) \\
 =& \frac{1}{nl^2}\sum_{j=0}^{n-l} (Z_1(j,l)-Z_2(j,l))^2+o_P(l^{-1})=\frac{1}{nl^2} \sum_{j=0}^{n-l}\Big(\sum_{i=j+1}^{j+l}\Delta M_i\Big)^2+o_P(l^{-1}),  
\end{align*}
where
\begin{align*}
&\Delta M_i =  \Delta M_{i,1}/\rho+ \Delta M_{i,2}/(1-\rho)+ (e_{i,1}-e_{i,2})\\
=& (T_i-\rho)\Big(\frac{\widetilde{e}_{i,1}}{\rho}+\frac{\widetilde{e}_{i,2}}{1-\rho}\Big)
-\Ex\left[(\ell_n-\rho)\Big(\frac{\widetilde{e}_{i,1}}{\rho}+\frac{\widetilde{e}_{i,2}}{1-\rho}\Big)|\bm \Lambda_{n-1}\right]+  (e_{i,1}-e_{i,2}).
\end{align*} 
Notice that $\{\Delta M_i\}$ is a sequence of martingale differences with 
$$ \Ex[(\Delta M_i)^2|\mathscr{F}_{i-1}]
\to \rho(1-\rho)\Ex\left(\frac{\widetilde{e}_{i,1}}{\rho}+\frac{\widetilde{e}_{i,2}}{1-\rho}\right)^2
+\Ex(e_{i,1}-e_{i,2})^2=\sigma_{\tau}^2 \text{ in } L_1. $$
It follows that
$$ \frac{1}{nl} \sum_{j=0}^{n-l}\Big(\sum_{i=j+1}^{j+l}\Delta M_i\Big)^2
=\frac{1}{nl} \sum_{j=0}^{n-l}\sum_{i=j+1}^{j+l}\Ex[(\Delta M_i)^2|\mathscr{F}_{i-1}]+o_P(1)
\overset{P}\to \sigma_{\tau}^2 $$
c.f. the arguments of the proof of \cite[Theorem 3.5]{Zhang2023}. It follows that \eqref{eq:thm:mb:1} hold. The proof is now completed. $\Box$

\bibliographystyle{apalike}

\newpage
\bigskip

\appendix
\setcounter{equation}{0}
\renewcommand{\theequation}{A.\arabic{equation}}
\setcounter{table}{0}
\renewcommand{\thetable}{A.\arabic{table}}
\setcounter{table}{0}
\renewcommand{\thefigure}{A.\arabic{table}}

\begin{center}
 { \LARGE\bf  Covariate-Adaptive
Randomization in Clinical Trials without Inflated Variances}

 {\Large \bf Supplementary Materials}
\end{center}

\renewcommand{\thesection}{\Alph{section}}
 In the supplementary materials, we give simulation studies of the performances of the proposed CARA and the tests for treatment effects under the CAR. The simulation studies are provided by Mr. Yuhang Tao, a Phd student of the author.

\section{Simulation}

\subsection{Performances of the CAR for discrete Covariates}
In this section, we evaluate marginal imbalance and within-stratum imbalance under our proposed procedure and compare the results with simple randomization (SR) and Pocock and Simon's marginal procedure (PS) under equal ($\rho=1/2$) and unequal allocation ($\rho=2/3$) (In the case of discrete covariates, the PS procedure is a special case of the procedures  proposed by  \cite{Ma2024} for equation allocation and by \cite{Liu2025} for unequal allocation). The covariates $\boldsymbol{X}_i$ are two-dimensional, where $X_{i,1}$ takes values in $\{1,2\}$ with equal probability and $X_{i,2}$ takes in values in $\{1,2\}$ with probabilities of 0.2 and 0.8, respectively. The biased coin probability for PS is set to 0.9, consistent with \cite{Pocock1975}. For the new procedure, we consider $\gamma=0.3,0.5$ and $0.6$ with the $\ell(x)= 0.1 \vee (\rho-0.5x)\wedge 0.9$. The upper and lower bounds of $\ell(x)$ are determined by the biased coin probability of PS, in order to make the comparison more convincing. The sample sizes are $n$ = 200, 500 and 800, and each case is based on 5000 replicates. The simulation results of overall imbalance, marginal imbalance and within-stratum imbalance under equal allocation and unequal allocation are presented in Tables \ref{dis_equal} and \ref{dis_unequal}, respectively. 
 
The standard deviations of the imbalance of overall imbalance, marginal imbalance and within-stratum imbalance under the new procedure and PS procedure are substantially smaller under both the proposed procedure and the PS procedure than under SR, indicating that both procedures achieve effective covariate balance. Besides, the standard deviations under the new procedure is larger than PS and increase with $\gamma$, indicating the new procedure achieves weaker balance for discrete covariates, and its efficiency decreases as $\gamma$ increases.

\begin{table}[htbp]
    \centering
    \caption{Standard deviations of various imbalances under various randomization procedures with equal allocation ($\rho =1/2$).}
    \begin{tabular}{l*{6}{c}}
    \toprule
    Randomization & $n$ & $D_n$ & $D_n(1;1)$ & $D_n(2;1)$ & $D_n(1,1)$ & $D_n(1,2)$ \\
      \midrule
      SR  & 200 & 7.01 & 4.98 & 3.18 & 2.26 & 4.45  \\
          & 500 & 11.22 & 7.49 & 4.71 & 3.35 & 6.66\\
          & 800 & 14.36 & 10.14 & 6.30 & 4.40 & 9.15 \\
      PS   & 200 & 0.55 & 0.49 & 0.50 & 1.40 & 1.44 \\
        & 500 & 0.56 & 0.52 & 0.53 & 2.19 & 2.22 \\
          & 800 & 0.56 & 0.49 & 0.49 & 2.76 & 2.78 \\
      NEW($\gamma=0.3$) & 200 & 1.54 & 1.36 & 1.32 & 1.58 & 1.82 \\
      & 500 & 1.80 &  1.47 & 1.47 & 2.22 & 2.42  \\
         & 800& 1.92 & 1.68 & 1.62 & 3.02 & 3.21  \\
      NEW($\gamma=0.5$) & 200 & 2.53 & 2.15 & 1.98 & 1.78 & 2.23 \\
       & 500 & 3.21 & 2.59 & 2.50 & 2.47 & 3.00 \\
          & 800& 3.70 & 3.15 & 3.05 & 3.25 & 3.84  \\
      NEW($\gamma=0.6$) & 200 & 3.11 & 2.61 & 2.21 & 1.82 & 2.58\\
      & 500 & 4.28 & 3.34 & 3.04 & 2.68 & 3.45 \\
          & 800& 4.90 & 4.18 & 3.83 & 3.45 & 4.46 \\
    \bottomrule
    \end{tabular}
    \label{dis_equal}
\end{table}

\clearpage

\begin{table}[ht]
    \centering
    \caption{Standard deviations of various imbalances under various randomization procedures with unequal allocation ($\rho =2/3$).}
    \begin{tabular}{l*{6}{c}}
    \toprule
    Randomization & $n$ & $D_n$ & $D_n(1;1)$ & $D_n(2;1)$ & $D_n(1,1)$ & $D_n(1,2)$ \\
      \midrule
      SR  & 200 &  6.56 & 4.69 & 2.99 & 2.14 & 4.15  \\
          & 500 & 10.56 & 7.49 & 4.71 & 3.35 & 6.66 \\
          & 800 & 13.30 & 9.45 & 5.97 & 4.28 & 8.42 \\
      PS  & 200 & 0.61 & 0.53  & 0.52 & 1.39 & 1.42\\
          & 500 & 0.60 & 0.52  & 0.53 & 2.19 & 2.22 \\
          & 800 & 0.61 & 0.53  & 0.52 & 2.71 & 2.73 \\
      NEW($\gamma=0.3$) & 200  & 1.45 & 1.26 & 1.22 & 1.48 & 1.70 \\
          & 500 & 1.69  & 1.47 & 1.47 & 2.22 & 2.42 \\
          & 800 & 1.79  & 1.56 & 1.55 & 2.75 & 2.91 \\
      NEW($\gamma=0.5$) & 200  & 2.37 & 2.03 & 1.86 & 1.63 & 2.13 \\
          & 500 & 3.05  & 2.59 & 2.50 & 2.47 & 3.00 \\
          & 800 & 3.44  & 2.97 & 2.85 & 3.08 & 3.66 \\
      NEW($\gamma=0.6$) & 200  & 2.97 & 2.47 & 2.15 & 1.77 & 2.45\\
          & 500 & 3.99  & 3.34 & 3.04 & 2.68 & 3.45 \\
          & 800 & 4.65  & 4.03 & 3.64 & 3.28 & 4.35 \\
    \bottomrule
    \end{tabular}
    \label{dis_unequal}
\end{table}

\subsection{Performances of the CAR  for continuous Covariates}\label{simulation_cont}
In this section, we evaluate convergence rate of specified covariate feature and additional unspecified covariates under the new procedure and compare the procedure with SR and COV procedure proposed by \cite{Ma2024} for equal allocation and by \cite{Liu2025} for unequal allocation. The covariates are two-dimensional, where $X_{i,1}$ and $X_{i,2}$ are generated independently from $N(0,1)$ and $N(1,1)$, respectively.

Under equal allocation, we implement COV procedure and the NEW procedure with the feature map $\phi(X_i) = (\sqrt{w_0},\sqrt{w_1}X_i,\sqrt{w_2}\operatorname{vec}(X_iX_i'))$ and different sets of weights. The biased coin probability for COV procedure is set to 0.9, consistent with \cite{Ma2024}. For the new procedure, we consider $\gamma = 0.3, 0.5$ and $0.6$, with the allocation function specified as $\ell(x)= 0.1 \vee (\rho-0.5x)\wedge 0.9$. The sample sizes are $n$ = 200, 500 and 800, and each case is based on 5000 replicates. The simulation results are presented in Table \ref{continuous_equal}. 

Several conclusions can be drawn from Table \ref{continuous_equal}. First, standard deviations of the imbalance of covariate features with nonzero weights in the feature map under the proposed NEW procedure and COV procedure are substantially smaller  than the one under SR, indicating that both procedures achieve effective covariate balance within the targeted feature space. In addition, the standard deviations under the NEW procedure are larger than those under COV and increase with $\gamma$. This suggests that the convergence rate of the proposed procedure is slower than that of COV, and becomes slower as $\gamma$ increases, which is consistent with our theoretical findings. Besides, standard deviations of the imbalance of additional covariates vary across different settings and randomization procedure. It is worth noting that when the weights are set to $(w_0,w_1,w_2) = (0,1,1)$, standard deviations of $\sum (T_i-\rho)e^{-X_{i,2}^2}$ under the COV procedure are larger than those under SR, indicating potential variance inflation for covariates not targeted by the feature map. In contrast, under the same setting, the standard deviations under the new procedure are smaller than those under SR. This phenomenon suggests that the COV procedure may induce variance inflation for additional covariates, whereas the new procedure avoids this issue, highlighting its superior robustness in maintaining covariate balance for additional covariates.

\begin{table}[htbp]
    \centering
    \footnotesize 
    \setlength{\tabcolsep}{2.7pt} 
    \renewcommand{\arraystretch}{0.65}
    \caption{Means (standard deviations) of covariate imbalances under various randomization procedures with equal allocation ($\rho =1/2$).}
    \begin{tabular}{l*{6}{c}}
    \toprule
    Randomization & $n$ & $\sum (T_i - \rho)$ & $\sum (T_i - \rho)X_{i,1}$ & $\sum (T_i - \rho)X_{i,2}$ & $\sum (T_i - \rho)X_{i,1}^2$ & $\sum (T_i - \rho)e^{-X_{i,2}^2}$ \\
      \midrule
      SR  & 200 & 0.06(7.08) & -0.05(7.14) & -0.01(10.15) & -0.18(12.24) & 0.03(3.87) \\
          & 500 & 0.15(11.17) & -0.12(11.47) & 0.02(15.87) & 0.08(19.53) & 0.10(6.08) \\
          & 800 & -0.25(13.94) & -0.01(14.26) & -0.60(19.64) & -0.21(24.47) & -0.07(7.68) \\
      COV & 200 & 0.00(0.70) & 0.00(0.83) & 0.00(0.87) & 0.31(10.17) & -0.01(1.69)\\
            $(w_0,w_1,w_2) = (1,1,0)$ & 500 & 0.00(0.70) & 0.00(0.83) & 0.00(0.88) & -0.20(16.29) & 0.03(2.62) \\
          & 800 & 0.00(0.69) & 0.00(0.85) & -0.01(0.88) & 0.41(20.32) & -0.04(3.25)\\
      NEW($\gamma=0.3$) & 200 & 0.01(1.17) & -0.01(1.20) & -0.01(1.22) & 0.07(10.04) & -0.01(1.80)\\
            $(w_0,w_1,w_2) = (1,1,0)$ & 500 & 0.02(1.29) & 0.02(1.35) & 0.02(1.37) & -0.25(15.86) & -0.04(2.76)\\
         & 800&-0.01(1.37) & -0.02(1.43) & -0.01(1.45) & -0.39(19.96) & -0.03(3.33)  \\
      NEW($\gamma=0.5$) & 200 & 0.06(1.85) & -0.02(1.87) & -0.01(1.87) & 0.11(10.23) & 0.03(2.12)\\
           $(w_0,w_1,w_2) = (1,1,0)$  & 500 &0.05(2.34) & 0.00(2.37) & 0.00(2.31) & 0.23(16.03) & -0.05(3.09)\\
          & 800& -0.06(2.66) & -0.03(2.62) & 0.09(2.64) & -0.18(19.99) & -0.12(3.68) \\
      NEW($\gamma=0.6$) & 200 &-0.02(2.28) & 0.08(2.35) & 0.03(2.38) & 0.22(10.07) & -0.03(2.34) \\
          $(w_0,w_1,w_2) = (1,1,0)$ & 500 & -0.14(3.09) & -0.01(3.16) & -0.01(3.20) & -0.08(16.29) & -0.11(3.44) \\
          & 800&0.01(3.59) & 0.08(3.61) & -0.09(3.62) & -0.13(20.47) & 0.08(4.16) \\
      COV   & 200  & -0.06(5.53) & -0.01(1.49) & 0.00(1.56) & 0.02(2.05) & -0.05(4.06) \\
      $(w_0,w_1,w_2) = (0,1,1)$    & 500 & -0.05(8.77) & -0.02(1.49) & -0.04(1.54) & -0.02(2.05) & 0.04(6.45) \\
          & 800 & -0.03(11.02) & 0.02(1.51) & -0.01(1.59) & 0.02(2.03) & -0.07(8.24)\\
      NEW($\gamma=0.3$) & 200 & -0.07(5.15) & 0.03(1.72) & -0.01(1.78) & -0.02(2.18) & -0.03(3.77) \\
      $(w_0,w_1,w_2) = (0,1,1)$    & 500 & 0.10(7.96) & 0.04(1.81) & -0.02(1.88) & 0.03(2.20) & 0.06(5.85) \\
         & 800 &-0.02(9.90) & -0.04(1.88) & -0.01(1.95) & 0.01(2.26) & 0.06(7.32)\\
      NEW($\gamma=0.5$)    & 200 & -0.07(4.87) & -0.03(2.14) & 0.05(2.22) & 0.07(2.55) & -0.08(3.52) \\
      $(w_0,w_1,w_2) = (0,1,1)$    & 500 & 0.11(7.49) & 0.05(2.55) & 0.01(2.61) & -0.03(2.83) & 0.12(5.61) \\
          & 800 &0.26(9.45) & 0.05(2.78) & 0.01(2.87) & -0.06(3.10) & 0.19(7.09) \\
      NEW($\gamma=0.6$)   & 200& 0.02(4.90) & 0.04(2.46) & -0.03(2.49) & -0.05(2.93) & 0.00(3.57) \\
      $(w_0,w_1,w_2) = (0,1,1)$    & 500 & 0.13(7.48) & 0.08(3.24) & -0.04(3.26) & -0.01(3.52) & 0.11(5.58) \\
          & 800 & -0.04(9.23) & -0.02(3.66) & 0.05(3.68) & 0.03(4.01) & -0.03(7.00) \\
      COV   & 200 & -0.01(1.25) & 0.05(1.59) & 0.02(1.59) & 0.01(2.14) & -0.02(2.03) \\
      $(w_0,w_1,w_2) = (1,1,1)$ & 500 & 0.03(1.29) & -0.01(1.59) & 0.01(1.58) & 0.02(2.07) & -0.04(3.21)\\
          & 800 & -0.02(1.30) & -0.01(1.61) & -0.01(1.57) & 0.00(2.13) & 0.08(4.05) \\
      NEW($\gamma=0.3$) & 200 & 0.01(1.51) & -0.05(1.76) & 0.01(1.74) & 0.01(2.21) & 0.03(2.03) \\
      $(w_0,w_1,w_2) = (1,1,1)$  & 500& -0.01(1.68) & -0.01(1.88) & 0.02(1.87) & 0.00(2.28) & -0.03(3.10) \\
         & 800 & -0.03(1.72) & 0.00(1.91) & -0.03(1.89) & -0.04(2.31) & -0.11(3.75)\\
      NEW($\gamma=0.5$)    & 200 & -0.07(2.07) & 0.02(2.16) & -0.02(2.15) & 0.01(2.63) & -0.04(2.19) \\
      $(w_0,w_1,w_2) = (1,1,1)$ & 500 & 0.03(2.48) & 0.02(2.56) & 0.05(2.58) & -0.02(2.86) & 0.05(3.11) \\
          & 800 & 0.04(2.76) & 0.03(2.82) & -0.03(2.83) & -0.03(3.15) & 0.02(3.75) \\
      NEW($\gamma=0.6$)   & 200& -0.05(2.43) & -0.01(2.51) & -0.04(2.45) & -0.02(2.90) & 0.00(2.28) \\
      $(w_0,w_1,w_2) = (1,1,1)$ & 500 & -0.06(3.18) & -0.01(3.20) & 0.06(3.18) & -0.02(3.55) & -0.06(3.33)\\
          & 800& 0.08(3.64) & 0.06(3.74) & 0.01(3.59) & 0.03(4.00) & 0.02(4.09) \\
    \bottomrule
    \end{tabular}
    \label{continuous_equal}
\end{table}

Under unequal allocation, we consider a treatment assignment ratio of $\rho:(1-\rho)$ with $\rho = 2/3$. We implement COV procedure and the NEW procedure with the feature map $\phi(X_i) = (1,X_i)$. The biased coin probability for COV procedure is set to 0.9 to maintain consistency with \cite{Liu2025}. For the new procedure, we consider $\gamma = 0.3, 0.5$ and $0.6$, with $\ell_{asym}(x) = 0.1 \vee (\rho-0.5x)\wedge 0.9$ inducing an asymmetric allocation rule and $\ell_{sym}(x) = (2\rho-0.9) \vee (\rho-0.5x)\wedge 0.9$ corresponding to a symmetric allocation rule. The sample sizes are $n$ = 200, 500 and 800, and each case is based on 5000 replicates. The simulation results are presented in Table \ref{continuous_unequal}. Additionally, we simulated the means of $n^{-1}\sum_{i=1}^n (T_i-\rho)X_{i,2}^2$ and $n^{-1}\sum_{i=1}^n (T_i-\rho)e^{-X_{i,1}^2}$ for the sample size $n\in\{50,100,200,500,800,1000,2000,4000,8000\}$, which are presented in Figure \ref{fig:twoimages}. 

The standard deviations of the imbalance of covariate features with nonzero weights in the feature map exhibit the same trends and patterns as those under equal allocation, leading to identical conclusions. For the imbalance of additional covariates, under COV procedures, means of $n^{-1}\sum_{i=1}^n (T_i-\rho)X_{i,2}^2$ and $n^{-1}\sum_{i=1}^n (T_i-\rho)e^{-X_{i,1}^2}$ converge to a nonzero constant as $n$ increases, indicating the presence of "shift problem" observed in \cite{Liu2025}. In contrast, under the new procedure, the corresponding means converge to zero, which aligns with our theoretical results and demonstrates the improved stability of the proposed design. In addition, the rate at which the mean converges to zero depends on both the parameter $\gamma$ and the choice of allocation function. Specifically, a larger value of $\gamma$ leads to faster convergence to zero. Moreover, the convergence is faster under the allocation function $\ell_{sym}(x)$ than under $\ell_{asym}(x)$.


\begin{table}[htbp]
    \centering
    \footnotesize 
    \setlength{\tabcolsep}{2.7pt} 
    \renewcommand{\arraystretch}{1}
    \caption{Means (standard deviations) of covariate imbalances under various randomization procedures with unequal allocation ($\rho =2/3$).}
    \begin{tabular}{l*{6}{c}}
    \toprule
    Randomization & $n$ & $\sum (T_i - \rho)$ & $\sum (T_i - \rho)X_{i,1}$ & $\sum (T_i - \rho)X_{i,2}$ & $\sum (T_i - \rho)X_{i,2}^2$ & $\sum (T_i - \rho)e^{-X_{i,1}^2}$ \\
      \midrule
      SR  & 200 & 0.01(6.72) & 0.07(6.59) & -0.06(9.48) & -0.16(21.26) & 0.04(4.46) \\
          & 500 & 0.02(10.37) & 0.02(10.51) & 0.12(14.57) & 0.09(32.93) & 0.05(6.90) \\
          & 800 & 0.30(13.33) & -0.07(13.37) & 0.64(19.01) & 1.31(42.57) & 0.11(8.75) \\
      COV & 200 & -0.33(0.78) & 0.00(0.90) & -0.01(0.94) & 3.61(10.00) & -0.66(2.37)  \\
          & 500 & -0.32(0.74) & -0.00(0.88) & -0.01(0.94) & 9.02(15.56) & -1.26(3.69)  \\
          & 800 & -0.33(0.77) & 0.01(0.88) & 0.01(0.94) & 14.08(19.55) & -1.89(4.69)  \\
      NEW($\gamma=0.3$) & 200 & -0.10(1.15) & -0.02(1.22) & -0.21(1.25) & -3.78(10.32) & 0.64(2.38)  \\
      with $\ell_{asym}(x)$   & 500 & -0.04(1.27) & -0.00(1.35) & -0.15(1.36) & -7.56(15.31) & 1.42(3.69)  \\
         & 800 & -0.02(1.36) & 0.00(1.41) & -0.16(1.43) & -10.72(19.30) & 1.97(4.58)    \\
      NEW($\gamma=0.5$) & 200 & 0.02(1.75) & -0.01(1.80) & -0.15(1.83) & -1.90(10.27) & 0.35(2.41)  \\
      with $\ell_{asym}(x)$    & 500 & -0.00(2.18) & 0.03(2.28) & -0.04(2.25) & -2.15(15.54) & 0.32(3.73) \\
          & 800 & 0.01(2.46) & -0.02(2.51) & -0.05(2.50) & -2.74(19.76) & 0.39(4.70)   \\
      NEW($\gamma=0.6$) & 200 & 0.02(2.16) & 0.03(2.26) & 0.00(2.27) & -0.87(10.54) & 0.09(2.61)  \\
      with $\ell_{asym}(x)$     & 500 & -0.03(2.89) & 0.01(3.00) & 0.00(3.01) & -1.11(16.67) & 0.10(3.93) \\
          & 800 & -0.03(3.36) & -0.00(3.44) & 0.06(3.42) & -1.36(20.35) & 0.13(4.86) \\
     NEW($\gamma=0.3$) & 200 & 0.01(1.28) & -0.03(1.48) & -0.00(1.56) & 0.06(10.10) & -0.05(2.34)  \\
     with $\ell_{sym}(x)$     & 500 & 0.01(1.38) & -0.02(1.54) & 0.05(1.61) & 0.67(15.39) & -0.02(3.67)  \\
         & 800 &  -0.02(1.41) & -0.02(1.61) & 0.06(1.62) & 0.68(19.82) & -0.10(4.59)  \\
      NEW($\gamma=0.5$) & 200 & 0.01(1.75) & 0.02(1.91) & 0.02(1.93) & 0.16(10.29) & 0.01(2.46)  \\
      with $\ell_{sym}(x)$     & 500 & 0.03(2.22) & -0.05(2.30) & 0.05(2.31) & -0.02(16.07) & -0.07(3.80) \\
          & 800 &  -0.01(2.51) & -0.00(2.57) & 0.04(2.53) & 0.11(19.73) & 0.08(4.73) \\
      NEW($\gamma=0.6$) & 200 & 0.02(2.21) & -0.01(2.27) & 0.00(2.36) & 0.08(10.69) & -0.04(2.58)  \\
      with $\ell_{sym}(x)$     & 500 & 0.02(2.94) & -0.04(2.99) & -0.06(3.03) & 0.17(16.20) & -0.04(3.89) \\
          & 800 & -0.02(3.38) & -0.01(3.50) & -0.11(3.39) & -0.43(20.41) & 0.01(4.94) \\
    \bottomrule
    \end{tabular}
    \label{continuous_unequal}
\end{table}

\clearpage
\begin{figure}[htbp]
    \includegraphics[width=\linewidth]{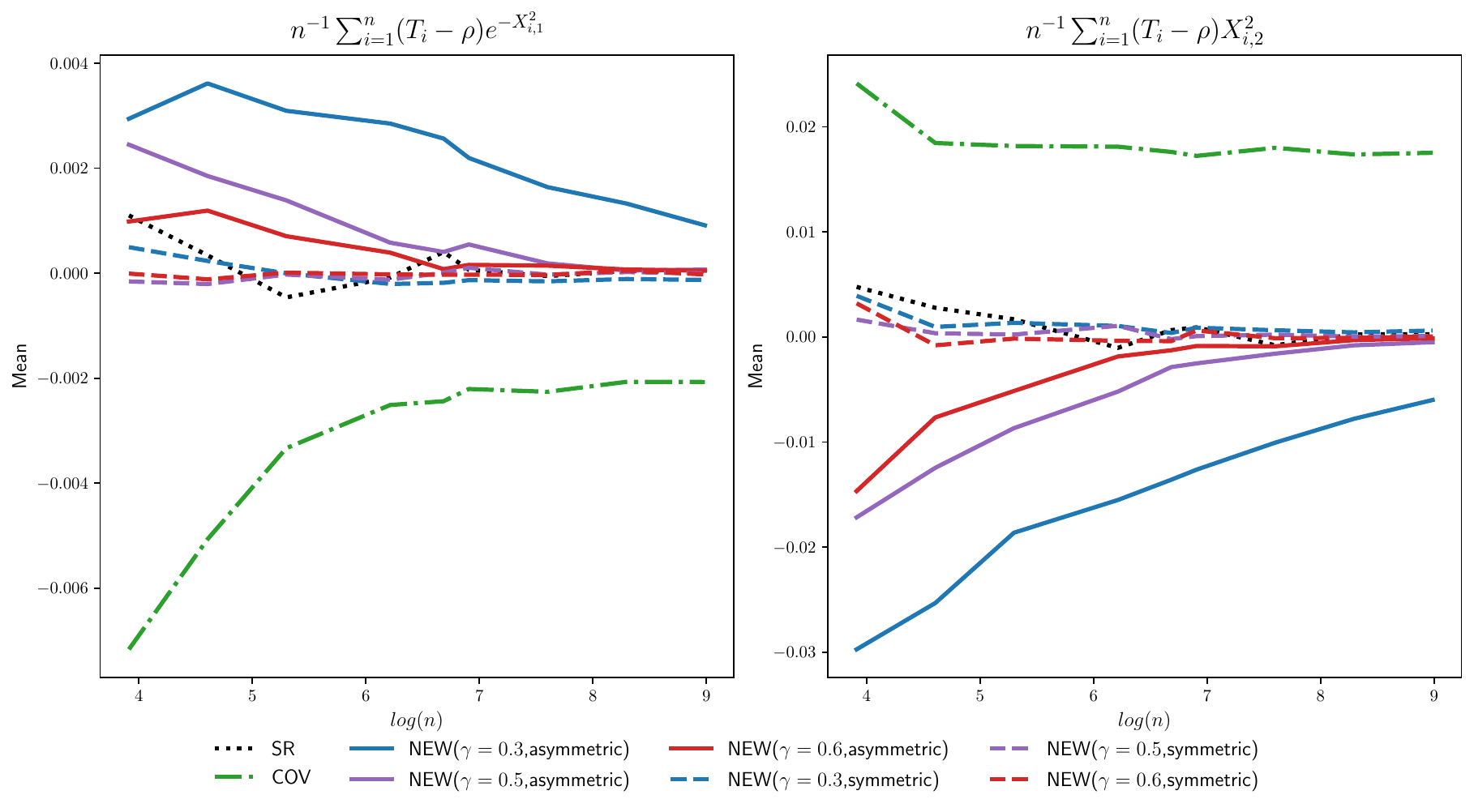}
	\caption{Simulated means of additional covariates under various sample size and randomization procedures. SR stands for the simple randomization, COV for the procedure proposed by \cite{Liu2025}, and NEW for our proposed procedure.}
	\label{fig:twoimages}
\end{figure}

\subsection{Hypothesis Testing for Treatment Effect}
In this section, we evaluated the behavior of classical test and adjusted tests for treatment effects under the new procedure. One of the adjusted test is the one based on the regression method, and is referred as Adjusted Test. The other adjusted test is the one based on the moving block estimator, and is refereed as Moving Block.   For $t\in\{1, 2\}$ and $i=1,\cdots,n$, the potential outcomes are generated by the equation $Y_i(t) = f_t(\boldsymbol{X}_i) + \sigma_t(\boldsymbol{X}_i)\varepsilon_i(t)$, where $\boldsymbol{X}_i$, $f_t(\boldsymbol{X}_i)$ and $\sigma_t(\boldsymbol{X}_i)$ are specified below. In each model, $\{\boldsymbol{X}_i, \varepsilon_i(1), \varepsilon_i(2)\}_{i=1}^n$ are i.i.d with $\varepsilon_i(1)$ and $\varepsilon_i(2)$ independently distributed as $N(0,1)$. 

Model 1: Linear model with discrete covariates. $\boldsymbol{X}_i$ is a two-dimensional discrete vector, where $X_{i,1}$ takes values in $\{-1,1\}$ with equal probability and $X_{i,2}$ takes values in $\{2,3\}$ with probabilities of 0.8 and 0.2, respectively. The two components are generated independently. The feature map is $\phi(\boldsymbol{X}_i) = (I\{X_{i,1}=-1\}, I\{X_{i,1} = 1\}, I\{X_{i,2}= 2\}, I\{X_{i,2}= 3\})$. $f_1(\boldsymbol{X}_i) = f_2(\boldsymbol{X}_i) = 2X_{i,1} + 2X_{i,2}$, $\sigma_1(\boldsymbol{X}_i) =\sigma_2(\boldsymbol{X}_i) = 2 e^{X_{i,2}-X_{i,1}-2}$. 

Model 2: Nonlinear model with discrete covariates. The specification of $\boldsymbol{X}_i$ and the feature map $\phi(\boldsymbol{X}_i)$ are the same as those in Model 1. $f_1(\boldsymbol{X}_i) = 2X_{i,1} + X_{i,2} + e^{X_{i,2}-X_{i,1}-2}$, $f_2(\boldsymbol{X}_i) = X_{i,1} + X_{i,2} + e^{X_{i,2}+X_{i,1}-2}$, $\sigma_1(\boldsymbol{X}_i) = 2 + X_{i,1}^2$, $\sigma_2(\boldsymbol{X}_i) = 1 + X_{i,2}^2$.

Model 3: Linear model with continuous covariates. $\boldsymbol{X}_i$ is a two-dimensional continuous vector, where $X_{i,1}\sim N(0,1)$, $X_{i,2}\sim N(1,1)$ and they are independent. The feature map is $\phi(\boldsymbol{X}_i) = (1,  X_{i,1}, X_{i,2})$. The specification of $f_1(\boldsymbol{X}_i)$, $f_2(\boldsymbol{X}_i)$, $\sigma_1(\boldsymbol{X}_i)$ and $\sigma_2(\boldsymbol{X}_i)$ are the same as those in Model 1.

Model 4: Nonlinear model with continuous covariates. The specification of $\boldsymbol{X}_i$ and the feature map $\phi(\boldsymbol{X}_i)$ are the same as those in Model 3 and the specification of $f_1(\boldsymbol{X}_i)$, $f_2(\boldsymbol{X}_i)$, $\sigma_1(\boldsymbol{X}_i)$ and $\sigma_2(\boldsymbol{X}_i)$ are the same as those in Model 2. 

For each model, we implement the new procedure under equal allocation ($\rho = 1/2$) and unequal allocation ($\rho = 2/3$), respectively. All remaining parameters are set the same as those in Section \ref{simulation_cont}. We simulate data with $\tau = 0$ and $\tau = 1$, respectively, to obtain the Type I errors and powers of the classical tests, adjusted tests  based  on the regression estimator and the tests based on moving block estimator, where $\ell = [\sqrt{n}]$ for the moving block estimator. The sample size is $n=500$ and the results are based on 20,000 replicates. The simulation results under equal allocation and unequal allocation are presented in Table \ref{hypothesis testing_equal} and Table \ref{hypothesis testing_unequal}, respectively.   

Several conclusions can be drawn from Tables \ref{hypothesis testing_equal} and \ref{hypothesis testing_unequal}. First, under various settings, the empirical Type I errors are all below the nominal 5\% level, indicating that the proposed procedure leads to conservative results for classical tests of the treatment effects. This observation aligns with our theoretical findings. Second, all Type I errors of adjusted tests of the treatment effect are well controlled at the nominal 5\% level. Third, the test based on the moving block estimator remains conservative, but is less conservative compared with the classical test. Moreover, compared with the traditional tests, the adjusted tests exhibit substantially higher power, demonstrating the effectiveness of the proposed adjusted test statistics. Finally, the test based on the moving block estimator generally achieves higher power than the classical test, but remains less powerful than the adjusted test. Therefore, it can serve as a useful alternative to the classical test when $\phi(\bm X_i)$ is unavailable.

\begin{table}[htbp]
    \centering
    \footnotesize 
    \setlength{\tabcolsep}{3pt} 
    \renewcommand{\arraystretch}{1}
    \caption{Type I errors and powers ($\tau = 1$) of classical test, adjusted test, test based on moving block estimator for treatment effect under the new CAR procedure in various settings with equal allocation}
    \begin{tabular}{*{8}{c}}
    \toprule
     & & \multicolumn{2}{c}{Classical Test} & \multicolumn{2}{c}{Adjusted Test} & \multicolumn{2}{c}{Moving Block}  \\
     \cmidrule(lr{0pt}){3-4} \cmidrule(lr{0pt}){5-6} \cmidrule(lr{0pt}){7-8} 
      Model & $\gamma$ & Type I  & Power & Type I  & Power & Type I  & Power  \\
      \midrule
    1    & 0.3 & 0.037 & 0.438 & 0.053 & 0.488 & 0.048 & 0.449 \\
         & 0.5 & 0.039 & 0.438 & 0.053 & 0.491 & 0.043 & 0.444 \\
         & 0.6 & 0.040 & 0.439 & 0.054 & 0.491 & 0.052 & 0.433 \\
    2    & 0.3 & 0.035 & 0.521 & 0.050 & 0.584 & 0.047 & 0.568 \\
         & 0.5 & 0.037 & 0.531 & 0.053 & 0.591 & 0.038 & 0.541 \\
         & 0.6 & 0.036 & 0.525 & 0.052 & 0.585 & 0.042 & 0.533 \\
    3    & 0.3 & 0.014 & 0.577 & 0.046 & 0.690 & 0.028 & 0.619 \\
         & 0.5 & 0.015 & 0.570 & 0.047 & 0.681 & 0.023 & 0.595 \\
         & 0.6 & 0.015 & 0.575 & 0.050 & 0.683 & 0.025 & 0.572 \\
    4    & 0.3 & 0.022 & 0.671 & 0.052 & 0.776 & 0.038 & 0.708 \\
         & 0.5 & 0.022 & 0.671 & 0.050 & 0.778 & 0.030 & 0.672 \\
         & 0.6 & 0.024 & 0.666 & 0.054 & 0.775 & 0.029 & 0.661 \\
    \bottomrule
    \end{tabular}
    \label{hypothesis testing_equal}
\end{table}

\clearpage
\begin{table}[ht]
    \centering
    \footnotesize 
    \setlength{\tabcolsep}{3pt} 
    \renewcommand{\arraystretch}{1}
    \caption{Type I errors and powers ($\tau = 1$) of classical test, adjusted test, test based on moving block estimator for treatment effect under the new CAR procedure in various settings with unequal allocation ($\rho =2/3$)}
    \begin{tabular}{*{14}{c}}
    \toprule
     & & \multicolumn{6}{c}{$\rho = 2/3$ with $\ell_{asym}(x)$} & \multicolumn{6}{c}{$\rho = 2/3$ with $\ell_{sym}(x)$} \\
    \cmidrule(lr{0pt}){3-8} \cmidrule(lr{0pt}){9-14}
     & & \multicolumn{2}{c}{Classical Test} & \multicolumn{2}{c}{Adjusted Test} & \multicolumn{2}{c}{Moving Block} & \multicolumn{2}{c}{Classical Test} & \multicolumn{2}{c}{Adjusted Test}& \multicolumn{2}{c}{Moving Block} \\
     \cmidrule(lr{0pt}){3-4} \cmidrule(lr{0pt}){5-6} \cmidrule(lr{0pt}){7-8} \cmidrule(lr{0pt}){9-10} \cmidrule(lr{0pt}){11-12} \cmidrule(lr{0pt}){13-14}
      Model & $\gamma$ & Type I  & Power & Type I  & Power & Type I  & Power & Type I  & Power & Type I  & Power & Type I  & Power \\
      \midrule
    1    & 0.3 &  0.037 & 0.393 & 0.052 & 0.444 &0.048 &0.421 &  0.034 & 0.396 & 0.050 & 0.446 & 0.047 & 0.403 \\
         & 0.5 &  0.038 & 0.396 & 0.052 & 0.447 &0.042 &0.379 &  0.038 & 0.391 & 0.053 & 0.444 & 0.043 & 0.390 \\
         & 0.6 &  0.040 & 0.397 & 0.055 & 0.445 &0.042 &0.367 &  0.039 & 0.401 & 0.054 & 0.450 & 0.041 & 0.385 \\
    2    & 0.3 &  0.036 & 0.408 & 0.054 & 0.470 &0.047 &0.421 &  0.036 & 0.414 & 0.055 & 0.475 & 0.046 & 0.409 \\
         & 0.5 &  0.036 & 0.407 & 0.052 & 0.471 &0.037 &0.394 &  0.036 & 0.409 & 0.056 & 0.473 & 0.040 & 0.400 \\
         & 0.6 &  0.040 & 0.415 & 0.057 & 0.477 &0.040 &0.403 &  0.038 & 0.410 & 0.057 & 0.471 & 0.042 & 0.401 \\
    3    & 0.3 &  0.012 & 0.529 & 0.044 & 0.650 &0.020 &0.568 &  0.012 & 0.552 & 0.044 & 0.671 & 0.021 & 0.570 \\
         & 0.5 &  0.014 & 0.531 & 0.048 & 0.651 &0.022 &0.537 &  0.015 & 0.547 & 0.049 & 0.669 & 0.021 & 0.546 \\
         & 0.6 &  0.016 & 0.543 & 0.051 & 0.660 &0.015 &0.526 &  0.015 & 0.542 & 0.051 & 0.658 & 0.020 & 0.536 \\
    4    & 0.3 &  0.021 & 0.545 & 0.053 & 0.667 &0.029 &0.556 &  0.022 & 0.604 & 0.052 & 0.718 & 0.030 & 0.625 \\
         & 0.5 &  0.023 & 0.575 & 0.057 & 0.696 &0.030 &0.576 &  0.024 & 0.603 & 0.055 & 0.719 & 0.027 & 0.598 \\
         & 0.6 &  0.024 & 0.583 & 0.057 & 0.704 &0.028 &0.581 &  0.024 & 0.597 & 0.056 & 0.714 & 0.026 & 0.602 \\
    \bottomrule
    \end{tabular}
    \label{hypothesis testing_unequal}
\end{table}
\end{document}